\patchcmd{\subsection}{-.5em}{.5em}{}{}
\patchcmd{\subsubsection}{-.5em}{.5em}{}{}
\numberwithin{equation}{section}
\newcommand{\cA}{\mathcal{A}}
\newcommand{\cB}{\mathcal{B}}
\newcommand{\cC}{\mathcal{C}}
\newcommand{\cH}{\mathcal{H}}
\newcommand{\cP}{\mathcal{P}}
\newcommand{\bE}{\mathbb{E}}
\newcommand{\bN}{\mathbb{N}}
\newcommand{\bR}{\mathbb{R}}
\newcommand{\ra}{\rightarrow}
\newcommand{\qand}{\quad \textrm{and} \quad}
\newcommand\subsetsim{\mathrel{%
\ooalign{\raise0.2ex\hbox{$\subset$}\cr\hidewidth\raise-0.8ex\hbox{\scalebox{0.9}{$\sim$}}\hidewidth\cr}}}
\newcommand{\eps}{\varepsilon}
\DeclareMathOperator{\Ent}{Ent}
\theoremstyle{theorem}
\newtheorem{theorem}{Theorem}[section]
\newtheorem{corollary}[theorem]{Corollary}
\newtheorem{proposition}[theorem]{Proposition}
\newtheorem{lemma}[theorem]{Lemma}
\theoremstyle{definition}
\newtheorem{definition}[theorem]{Definition}
\newtheorem{remark}[theorem]{Remark}
\tikzstyle{decision} = [diamond, draw, fill=blue!20, 
\tikzstyle{block} = [rectangle, draw, fill=blue!20, 
\tikzstyle{line} = [draw, -latex']
\tikzstyle{cloud} = [draw, ellipse,fill=red!20, node distance=3cm,
\renewcommand\labelenumi{(\roman{enumi})}
\renewcommand\theenumi\labelenumi 
\newcommand{\textoverline}[1]{$\overline{\mbox{#1}}$}
\DeclarePairedDelimiterX\Set[2]{\{}{\}}{#1\,\delimsize\vert\,#2}
\begin{document}
\bibliographystyle{plain} 

\title{Kud\textoverline{o}-continuity of entropy functionals}

\author{Michael Bj\"orklund}

\address{Department of Mathematics, Chalmers, Gothenburg, Sweden}
\email{micbjo@chalmers.se}

\author{Yair Hartman}
\address{Department of Mathematics, Ben Gurion University of the Negev, Be'er-Sheva, Israel}
\email{hartmany@bgu.ac.il}

\author{Hanna Oppelmayer}
\address{Department of Mathematics, Chalmers, Gothenburg, Sweden}
\email{hannaop@chalmers.se}
\date{}

\thanks{MB was supported by GoCas Young Excellence grant 11423310 and Swedish VR-grant 11253320, YH was partially supported by ISF grant 1175/18.}

\keywords{Non-monotone sequences of $\sigma$-algebras, differential entropy}

\subjclass[2010]{Primary: 60A10  ; Secondary: 28D20, 05C81}

\begin{abstract}
We study in this paper real-valued functions on the space of all sub-$\sigma$-algebras of a probability measure space, and
introduce the notion of Kud\textoverline{o}-continuity, which is an a priori strengthening of continuity with respect to strong convergence. 
We show that a large class of entropy functionals are Kud\textoverline{o}-continuous. On the way, we establish upper and 
lower continuity of various entropy functions with respect to asymptotic second order stochastic domination, which should be of 
independent interest. An application to the study of entropy spectra of $\mu$-boundaries associated to random walks on locally 
compact groups is given. 
\end{abstract}

\maketitle

\section{Introduction}

\subsection{The rough goal of the paper}
The need to understand the asymptotic behaviour of conditional expectations with respect to \emph{non-monotone} sequences of 
sub-$\sigma$-algebras, arises in a plethora of research fields, ranging from stochastic optimization \cite{Art2,Art3,Art01} to mathematical 
economy \cite{All83-1,All83-2,Cott86,Cott87} to jump processes \cite{Ch} to entropy theory of random walks on groups \cite{Bow,BHT,BHT2} (see 
Section \ref{sec:Borelprel} below for a discussion about the latter connection). An extensive analytic theory for non-monotone sequences of $\sigma$-algebras has been developed in \cite{Alon, AlonBram, Art01,Bo71,Fett77,Kudo,Nev72,Picc98,Rogg74}. Contrary to what happens for \emph{monotone}
increasing (or decreasing) sequences of sub-$\sigma$-algebras, in which case classical martingale theory is applicable and  
shows that the join (or meet) of the $\sigma$-algebras involved is always a natural ``limit'' of the sequence, non-monotone sequences typically lack convergent sub-sequences with respect to strong convergence (see Subsection \ref{subsec:spaceinfo} for definitions). Various remedies for this 
sequential non-compactness have been suggested over the years (see e.g. \cite{Kudo, Art01,Picc98}). In this paper we shall follow the ideas of 
Kud\textoverline{o} in his very insightful paper \cite{Kudo}: under some mild assumptions on the underlying measure space, it is possible to associate with every sequence of sub-$\sigma$-algebras a \emph{minimal upper Kud\textoverline{o}-limit} and a \emph{maximal lower Kud\textoverline{o}-limit} (see Subsection \ref{subsec:spaceinfo} for definitions and further discussions). These $\sigma$-algebras are unique (up to null sets) and they coincide (modulo null sets) if and only if the sequence converges strongly. Furthermore, the minimal upper (maximal lower) Kud\textoverline{o}-limit is typically strictly smaller (larger) than the set-theoretical limsup (liminf) of the sequence of sub-$\sigma$-algebras.

We shall consider a class of convex functionals, known as entropy functionals, defined on the convex set of probability densities on a given probability measure space. Our aim is to understand the asymptotic behaviour of these functionals along conditional expectations of a fixed probability density with respect to a (not necessarily monotone) sequence of sub-$\sigma$-algebras. In particular we are interested in whether these functionals are lower and upper semi-continuous with respect to the notions of lower and upper Kud\textoverline{o}-limits of the sequence of sub-$\sigma$-algebras (see Subsection \ref{subsec:spaceinfo} for definitions). These questions naturally occur in the entropy theory of random walks on groups, and some applications are discussed in more details below. 

We believe however that the main ideas of this paper are best phrased in the language of asymptotic second order stochastic domination and averaged entropy, which is why the first part of this paper is devoted to developing an asymptotic theory for entropy functions in this setting. Only later shall we
connect this theory to the discussion about Kud\textoverline{o}-limits above.

\subsection{Entropy functions and asymptotic second order stochastic domination}

Let us now define our basic set up.  Throughout the rest of this section, let $(X,\cB_X)$ be a measurable
space, and let $\xi$ be a probability measure on $\cB_X$. We 
denote by $\cP$ the set of probability densities on $(X,\xi)$, that is to say, the set of all measurable functions $f : X \ra [0,\infty)$ with $\int_X f \, d\xi = 1$. \\

Given $f \in \cP$, we define
\[
\alpha_f(t) = \int_t^\infty \xi(\{ f \geq \tau \}) \, d\tau, \quad \textrm{for $t \geq 0$}.
\]
We recall the classical notion of second order stochastic domination: if $f_1, f_2 \in \cP$, then we say that $f_1$ is \emph{second-order stochastically dominated} by $f_2$ if $\alpha_{f_1}(t) \leq \alpha_{f_2}(t)$ for all $t \geq 0$. We suggest the following asymptotic extension of this notion. 

\begin{definition}[Upper and lower limits]
Let $(f_n)$ be a sequence in $\cP$ and let $f \in \cP$. 
\vspace{0.1cm}
\begin{itemize}
\item We say that $f$ is an \emph{upper limit} of $(f_n)$ if 
\[
\varlimsup_n \alpha_{f_n}(t) \leq \alpha_f(t), \quad \textrm{for all $t \geq 0$}.
\]
\item We say that $f$ is a \emph{lower limit} of $(f_n)$ if 
\[
\alpha_f(t) \leq \varliminf_n \alpha_{f_n}(t), \quad \textrm{for all $t \geq 0$}.
\]
\end{itemize}
\end{definition}

Let $\Phi : [0,\infty) \ra \bR$ be a convex function with $\Phi(1) = 0$, and define the \emph{$\Phi$-entropy functional} 
\[
\Ent^\Phi_\xi : \cP \ra [0,\infty], \quad \Ent_\xi^\Phi(f) = \int_X \Phi(f(x)) \, d\xi(x), \quad \textrm{for $f \in \cP$}.
\]
Note that Jensen's inequality, together with the assumption that $\Phi(1) = 0$, shows that $\Ent_\xi^\Phi(f) \geq 0$ for all $f \in \cP$.
We shall mostly work with the subset $\cP_\Phi \subset \cP$ consisting of those $f \in \cP$ which satisfy
\[
\int_X |\Phi(f)| \, d\xi < \infty \qand \lim_{t \ra \infty} \alpha_f(t) \, \Phi'(t) = 0.
\]
Throughout the rest of this introduction, we shall assume that 
\vspace{0.1cm}
\begin{enumerate}
\item[(i)] $\Phi$ is twice continuously differentiable on $(0,\infty)$ and $\lim_{t \ra 0^+} t \, \Phi'(t) = 0$.
\vspace{0.1cm}
\item[(ii)] $\Phi(0) = 0$ and there exists a point $0 < t_o < 1$ such that $\Phi'$ is strictly negative on $(0,t_o)$ and strictly positive on $(t_o,\infty)$.
\end{enumerate}
These assumptions are clearly satisfied by the function 
\begin{equation}
\label{standard}
\Phi(t) = 
\left\{
\begin{array}{cc}
0 & \textrm{for $t = 0$} \\[5pt]
t \log t & \textrm{for $t > 0$}
\end{array}
\right.,
\end{equation}
with $t_o = e^{-1}$. The corresponding $\Phi$-entropy functional is usually denoted by $\Ent_\xi$, and is often referred to as the 
\emph{standard entropy functional} on $\cP$. The literature devoted to the study of this functional is vast, and we simply refer the 
reader to Section \cite[Section 5]{Bak} or \cite[Section 16]{Sim} for nice surveys, along with extensive references.  \\

Our main result concerning upper/lower limits and $\Phi$-entropy functionals can now be formulated as follows. It will be proved in 
Section \ref{sec:proofUppLow}.
\begin{theorem}
\label{thm_upper/lower-limits}
Let $(f_n)$ be a sequence in $\cP_\Phi$ and let $f$ be an element of $\cP_\Phi$.
\vspace{0.2cm}
\begin{enumerate}
\item[(i)] Suppose that $f$ is a lower limit of $(f_n)$. Then,
\[
\Ent_\xi^{\Phi}(f) \leq \varliminf_n \Ent_\xi^{\Phi}(f_n).
\]
\vspace{0.05cm}
\item[(ii)] Suppose that $f$ is an upper limit of $(f_n)$, and that for some $\beta > 0$,
\[
\int_1^\infty t^{-\beta} \, \Phi''(t) \, dt < \infty \qand \sup_n \int_X f_n^{1+\beta} \, d\xi < \infty.
\]
Then,
\[
\varlimsup_n \Ent_\xi^{\Phi}(f_n) \leq \Ent_\xi^{\Phi}(f).
\]
\end{enumerate}
\end{theorem}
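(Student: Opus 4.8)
The plan is to reduce the entire statement to a single integral representation of the entropy functional in terms of $\alpha_f$, after which both parts fall out of Fatou's lemma in its direct and reverse forms. Concretely, I would first establish the identity
\[
\Ent_\xi^\Phi(f) = \int_1^\infty \Phi''(u)\,\alpha_f(u)\,du + \int_0^1 \Phi''(u)\,\bigl(u - 1 + \alpha_f(u)\bigr)\,du, \qquad f \in \cP_\Phi .
\]
To obtain it, start from the second-order Taylor expansion with integral remainder,
\[
\Phi(s) = \Phi'(1)(s-1) + \int_1^\infty (s-u)_+\,\Phi''(u)\,du + \int_0^1 (u-s)_+\,\Phi''(u)\,du,
\]
valid for every $s \ge 0$ since $\Phi(1)=0$ and $\Phi \in C^2(0,\infty)$. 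Integrating over $X$ against $\xi$, the linear term drops out because $\int_X (f-1)\,d\xi = 0$, and the two nonnegative remainders are handled by Tonelli's theorem. The computation rests on the elementary layer-cake identity $\int_X (f-u)_+\,d\xi = \alpha_f(u)$, together with $\int_X (u-f)_+\,d\xi = u - 1 + \alpha_f(u)$. Since $u - 1 + \alpha_f(u) = \int_0^u \xi(\{f < \tau\})\,d\tau \ge 0$, both integrands above are nonnegative, and finiteness of the left-hand side (guaranteed by $f \in \cP_\Phi$) forces both integrals to be finite.

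For part (i) I would feed the hypothesis $\alpha_f \le \varliminf_n \alpha_{f_n}$ directly into the representation. On $[1,\infty)$ the integrand $\Phi''\alpha_{f_n}$ is nonnegative, and on $[0,1]$ so is $\Phi''(u)(u-1+\alpha_{f_n}(u))$; both are monotone in the value of $\alpha_{f_n}(u)$, so the pointwise lower-limit bound propagates. Two applications of Fatou's lemma, one per interval, combined with $\varliminf a_n + \varliminf b_n \le \varliminf(a_n+b_n)$, then give $\Ent_\xi^\Phi(f) \le \varliminf_n \Ent_\xi^\Phi(f_n)$.

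Part (ii) is the substantive direction and the only place the extra hypotheses are used. On $[0,1]$ there is no difficulty: the bound $0 \le u-1+\alpha_{f_n}(u) \le u$ together with $\int_0^1 u\,\Phi''(u)\,du = \Phi'(1) < \infty$ (integrate by parts, using $\Phi(0)=0$ and $\lim_{u\to0^+} u\Phi'(u)=0$) furnishes a fixed integrable majorant, so reverse Fatou applies and produces $\varlimsup_n \int_0^1 \Phi''(u-1+\alpha_{f_n}) \le \int_0^1 \Phi''(u-1+\alpha_f)$. The real work is the tail $\int_1^\infty \Phi''(u)\alpha_{f_n}(u)\,du$, where $\varlimsup_n \alpha_{f_n} \le \alpha_f$ is too weak on its own: one needs a uniform-in-$n$ integrable majorant for $\Phi''\alpha_{f_n}$. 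This is exactly what the two assumptions supply. Writing $\alpha_{f_n}(u) = \int_{\{f_n > u\}} (f_n - u)\,d\xi$ and using $f_n \le f_n^{1+\beta}u^{-\beta}$ on $\{f_n > u\}$, I would derive
\[
\alpha_{f_n}(u) \le u^{-\beta}\int_X f_n^{1+\beta}\,d\xi \le M\,u^{-\beta}, \qquad M := \sup_n \int_X f_n^{1+\beta}\,d\xi < \infty,
\]
so that $\Phi''(u)\alpha_{f_n}(u) \le M\,u^{-\beta}\Phi''(u)$, which is integrable on $[1,\infty)$ by hypothesis. Reverse Fatou then yields $\varlimsup_n \int_1^\infty \Phi''\alpha_{f_n} \le \int_1^\infty \Phi''\alpha_f$, and adding the two intervals via $\varlimsup(a_n+b_n) \le \varlimsup a_n + \varlimsup b_n$ completes the proof. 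The main obstacle is precisely securing this uniform tail control; once the representation identity and the bound $\alpha_{f_n}(u) \le M u^{-\beta}$ are in place, everything else is bookkeeping.
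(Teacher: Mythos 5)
Your proof is correct, and it takes a genuinely different route from the paper's. The paper first proves the layer-cake formula $\Ent_\xi^\Phi(f) = \int_0^\infty \xi(\{f \geq u\})\,\Phi'(u)\,du$ (Lemma \ref{Lemma_EntAlternative}, which requires splitting at $t_o$ because $\Phi$ is not monotone), then integrates by parts on $[\delta,\infty)$ to get an \emph{approximate} sandwich, uniform over $\cP_\Phi$ (Proposition \ref{Prop_Phi-inequality}): the entropy equals $\int_\delta^\infty \alpha_f(t)\Phi''(t)\,dt + \Phi'(\delta)$ up to an error $-2\max(\Phi(\delta),\delta\Phi'(\delta))$, and the theorem follows by an $\eps$--$\delta$ argument plus Fatou in both directions. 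Your exact Bregman/Taylor-remainder representation, splitting at $1$ rather than at $t_o$ or $\delta$, eliminates the approximation entirely: there are no boundary terms and no error bookkeeping, and the non-monotonicity of $\Phi$ never enters. Notably, your argument never uses the tail condition $\lim_{t\to\infty}\alpha_f(t)\Phi'(t)=0$ from the definition of $\cP_\Phi$ (the paper needs it precisely to kill the boundary term at infinity in its partial integration), nor the sign structure of $\Phi'$ around $t_o$; so you in fact prove a marginally stronger statement under weaker hypotheses. The analytic core of part (ii) is the same in both proofs: a Markov-type bound $\alpha_{f_n}(u) \leq M u^{-\beta}$ turning the hypothesis $\int_1^\infty t^{-\beta}\Phi''(t)\,dt < \infty$ into a uniform integrable majorant, followed by reverse Fatou (the paper implements this by applying Fatou to $C_\beta - t^\beta\alpha_{f_n}$ against the finite measure $t^{-\beta}\Phi''(t)\,dt$, which is equivalent to your dominated version). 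One small point to make explicit: since $\{f = 0\}$ may have positive $\xi$-measure, you do need your pointwise representation of $\Phi(s)$ at $s = 0$, where Taylor's theorem does not apply directly because $\Phi''$ can be non-integrable near $0$; but it follows by the same limiting computation you invoke for $\int_0^1 u\,\Phi''(u)\,du = \Phi'(1)$, using $\Phi(0)=0$, $\lim_{t\to 0^+} t\Phi'(t) = 0$ and continuity of $\Phi$ at $0$, so this is a one-line patch rather than a gap.
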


\begin{remark}
It is an easy exercise to show that the additional assumptions on $\Phi$ in (ii) are satisfied for the standard entropy functional, where $\Phi$ is given by \eqref{standard}.
\end{remark}

Roughly speaking, Theorem \ref{thm_upper/lower-limits} asserts that $\Ent_\xi^\Phi$ is both upper and lower semi-continuous with 
respect to the notions of upper and lower limits in $\cP$. In particular, we have the following corollary.

\begin{corollary}
Let $(f_n)$ be a sequence in $\cP_\Phi$ and let $f$ be an element of $\cP_\Phi$ which is both a lower and an upper limit
of the sequence $(f_n)$. If for some $\beta > 0$,
\[
\int_1^\infty t^{-\beta} \, \Phi''(t) \, dt < \infty \qand \sup_n \int_X f_n^{1+\beta} \, d\xi < \infty,
\]
then $\lim_n \Ent^\Phi_\xi(f_n) = \Ent_\xi^\Phi(f)$.
\end{corollary}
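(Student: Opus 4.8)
The plan is to obtain the conclusion as an immediate sandwich consequence of the two halves of Theorem \ref{thm_upper/lower-limits}, applied to the same sequence $(f_n)$ and the same limit $f$. The essential observation is that the standing hypotheses of the corollary package together precisely the inputs that each part of the theorem requires, so the whole argument is purely formal once the theorem is in hand.

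First I would invoke part (i). Since $f$ is assumed to be a lower limit of $(f_n)$, and both $f$ and all the $f_n$ lie in $\cP_\Phi$, part (i) applies verbatim and yields
\[
\Ent_\xi^{\Phi}(f) \leq \varliminf_n \Ent_\xi^{\Phi}(f_n),
\]
with no further hypotheses needed. Next I would invoke part (ii). Since $f$ is also assumed to be an upper limit of $(f_n)$, and the corollary explicitly postulates the existence of some $\beta > 0$ with $\int_1^\infty t^{-\beta}\,\Phi''(t)\,dt < \infty$ and $\sup_n \int_X f_n^{1+\beta}\,d\xi < \infty$, the integrability and uniform $L^{1+\beta}$-boundedness conditions of part (ii) are exactly met. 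Part (ii) then gives
\[
\varlimsup_n \Ent_\xi^{\Phi}(f_n) \leq \Ent_\xi^{\Phi}(f).
\]

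Finally I would chain the two inequalities together with the trivial general bound $\varliminf_n a_n \leq \varlimsup_n a_n$, valid for any real sequence $(a_n)$, to obtain
\[
\Ent_\xi^{\Phi}(f) \leq \varliminf_n \Ent_\xi^{\Phi}(f_n) \leq \varlimsup_n \Ent_\xi^{\Phi}(f_n) \leq \Ent_\xi^{\Phi}(f).
\]
All four quantities must therefore coincide; in particular $\varliminf_n$ and $\varlimsup_n$ agree, so the limit $\lim_n \Ent_\xi^\Phi(f_n)$ exists and equals $\Ent_\xi^\Phi(f)$.

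Since the reasoning is entirely formal, there is no genuine analytic obstacle here: all of the real work has already been carried out in establishing the upper and lower semicontinuity statements of Theorem \ref{thm_upper/lower-limits}. The only thing to check is that the corollary's assumptions (namely that $f$ is simultaneously a lower and an upper limit, together with the $\beta$-conditions) supply exactly the separate hypotheses demanded by parts (i) and (ii) respectively, which they do by construction.
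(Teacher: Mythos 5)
Your proof is correct and is exactly the argument the paper intends: the corollary is stated as an immediate consequence of Theorem \ref{thm_upper/lower-limits}, obtained by combining part (i) (lower semicontinuity, using that $f$ is a lower limit) with part (ii) (upper semicontinuity, using that $f$ is an upper limit together with the $\beta$-conditions) in a sandwich via $\varliminf_n \leq \varlimsup_n$. Nothing further is needed, since membership of $f$ in $\cP_\Phi$ guarantees $\Ent_\xi^\Phi(f)$ is finite, so the chain of inequalities forces the limit to exist and equal $\Ent_\xi^\Phi(f)$.
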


\subsection{Entropy functionals and their Kud\textoverline{o}-continuity}

Let us now connect upper and lower limits to our discussion about Kud\textoverline{o}-limits above. We give the following 
definition here, and refer to Subsection \ref{subsec:spaceinfo} for a more detailed discussion about the notions involved. 
For simplicity, we shall assume that $L^1(X,\xi)$ is separable in the norm topology.

\begin{definition}[Upper and lower Kud\textoverline{o}-limits]
Let $(\cA_n)$ be a sequence of sub-$\sigma$-algebras of $\cB_X$ and let $\cA$ be a sub-$\sigma$-algebra of $\cB_X$. We say 
that 
\vspace{0.2cm}
\begin{itemize}
\item $(\cA_n)$ \emph{converges strongly} to $\cA$ if 
\[
\lim_n \|\bE_{\cA}(f) - \bE_{\cA_n}(f)\|_{1} = 0, \quad \textrm{for every $f \in L^1(X,\xi)$}.
\]
\item $\cA$ is an \emph{upper Kud\textoverline{o}-limit} of $(\cA_n)$ if
\[
\varlimsup_n \|\bE_{\cA_n}(f)\|_1 \leq \|\bE_{\cA}(f)\|_1, \quad \textrm{for every $f \in L^1(X,\xi)$}.
\] 
\item $\cA$ is a \emph{lower Kud\textoverline{o}-limit} of $(\cA_n)$ if
\[
\|\bE_{\cA}(f)\|_1 \leq \varliminf_n \|\bE_{\cA_n}(f)\|_1, \quad \textrm{for every $f \in L^1(X,\xi)$}.
\] 
\end{itemize}
\end{definition}

Given a sequence $(\cA_n)$ of sub-$\sigma$-algebras, Kud\textoverline{o} (\cite[Theorem 3.3]{Kudo} and  \cite[Theorem 3.2]{Kudo}) shows
that there are always a \emph{minimal upper Kud\textoverline{o}-limit} $\cA^+$ and a \emph{maximal lower Kud\textoverline{o}-limit} $\cA^{-}$ of the sequence. Furthermore, $(\cA_n)$ converges strongly to $\cA^+$ if and only if $\cA^+$ and $\cA^{-}$ coincide modulo $\xi$-null sets \cite[Theorem 2.1 (ii)]{Kudo}. \\

In what follows, we let $\frak{S}(X,\xi)$ denote the space of information, that is to say, the set of all sub-$\sigma$-algebras of $\cB_X$,
identified up to $\xi$-null sets (see also Subsection \ref{subsec:spaceinfo} for more details). 

\begin{definition}[Kud\textoverline{o}-continuity]
A function
$F : \frak{S}(X,\xi) \ra [0,\infty]$ is \emph{Kud\textoverline{o}-continuous} if for every sequence $(\cA_n)$ in $\frak{S}(X,\xi)$,
we have
\[
F(\cA^{-}) \leq \varliminf_n F(\cA_n) \qand \varlimsup_n F(\cA_n) \leq F(\cA^+),
\]
where $\cA^+$ and $\cA^-$ denote the \emph{minimal upper Kud\textoverline{o}-limit} and the \emph{maximal upper Kud\textoverline{o}-limit}
of the sequence $(\cA_n)$.
\end{definition}

\begin{remark}
Since $(\cA_n)$ converges strongly to $\cA^+$ if and only if $\cA^+ \sim_\xi \cA^{-}$, we see that every Kud\textoverline{o}-continuous function
is also continuous with respect to strong convergence. We do not know to which extent the converse holds. 
\end{remark}

We relate in Corollary \ref{cor_relateKudotoAsymp2nd} below, the notions of upper and lower limits to upper and lower Kud\textoverline{o}-limits.
More precisely, we show that if $(\cA_n)$ is a sequence of sub-$\sigma$-algebras of $\cB_X$ and $\cA^{+}$ and $\cA^{-}$ are sub-$\sigma$-algebras
of $\cB_X$, then 
\begin{itemize}
\item $\cA^{+}$ is an upper Kud\textoverline{o}-limit of $(\cA_n)$ if and only if for every non-negative $\rho$ on $X$ with $\int_X \rho \, d\xi = 1$, 
the function $\bE_{\cA^{+}}(\rho)$ is an upper limit of the sequence $(\bE_{\cA_n}(\rho))$.
\vspace{0,2cm}
\item $\cA^{-}$ is an upper Kud\textoverline{o}-limit of $(\cA_n)$ if and only if for every non-negative $\rho$ on $X$ with $\int_X \rho \, d\xi = 1$, 
the function $\bE_{\cA^{-}}(\rho)$ is a lower limit of the sequence $(\bE_{\cA_n}(\rho))$.
\end{itemize}

Let $\Phi$ be as in the previous subsection. Given $\rho \in \cP_\Phi$, we define the \emph{$(\Phi,\rho)$-entropy} $H^\Phi_\rho(\cA)$ of 
a sub-$\sigma$-algebra $\cA \subset \cB_X$ by
\[
H^\Phi_\rho(\cA) = \Ent^\Phi_\xi(\bE_\cA(\rho)).
\]
It readily follows from Jensen's inequality that the map $\cA \mapsto H_\rho^{\Phi}(\cA)$ is increasing with respect to inclusions of $\sigma$-algebras. 
In what follows, we shall establish monotonicity with respect to upper and lower Kud\textoverline{o}-limits. 
The following theorem is a straightforward consequence of Theorem \ref{thm_upper/lower-limits}. We will provide the details of the proof in 
Section \ref{sec:proofofentropycont}.

\begin{theorem}
\label{thm_entropycont}
Let $\rho \in \cP_\Phi$ and let $(\cA_n)$ be a sequence of sub-$\sigma$-algebras of $\cB_X$. 
Let $\cA$ be a sub-$\sigma$-algebra of $\cB_X$. 
\begin{enumerate}
\item[(i)] Suppose that $\cA$ is a lower limit of $(\cA_n)$. Then,
\[
H^\Phi_\rho(\cA) \leq \varliminf_{n} H^\Phi_\rho(\cA_n).
\]
\item[(ii)] Suppose that $\cA$ is a lower limit of $(\cA_n)$, and that for some $\beta > 0$,
\[
\int_1^\infty t^{-\beta} \, \Phi''(t) \, dt < \infty \qand \int_X \rho^{1+\beta} \, d\xi < \infty.
\]
Then, 
\[
\varlimsup_{n} H^\Phi_\rho(\cA_n) \leq H^\Phi_\rho(\cA).
\]
\end{enumerate}
\end{theorem}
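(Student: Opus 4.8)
The plan is to deduce both statements from Theorem~\ref{thm_upper/lower-limits} by specializing it to the densities obtained from $\rho$ by conditioning. Set $f := \bE_\cA(\rho)$ and $f_n := \bE_{\cA_n}(\rho)$. Since conditional expectation preserves positivity and total mass, $f$ and all $f_n$ lie in $\cP$, and by the very definition of the $(\Phi,\rho)$-entropy we have $H^\Phi_\rho(\cA) = \Ent^\Phi_\xi(f)$ and $H^\Phi_\rho(\cA_n) = \Ent^\Phi_\xi(f_n)$. Hence each assertion is an instance of the correspondingly numbered part of Theorem~\ref{thm_upper/lower-limits} applied to $(f_n)$ and the limit density $f$, provided that (a) $f$ and every $f_n$ lie in $\cP_\Phi$, (b) the limit hypothesis on $\cA$ supplies the limit property of $f$ demanded by that part, and (c) in case (ii) the two integrability conditions hold.

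For (a) I would first record the Fubini identity $\alpha_g(t) = \int_X (g-t)^+ \, d\xi$, valid for all $g \in \cP$ and $t \ge 0$. As $s \mapsto (s-t)^+$ is convex, conditional Jensen gives $(\bE_\cB(\rho)-t)^+ \le \bE_\cB((\rho-t)^+)$ pointwise for any sub-$\sigma$-algebra $\cB$, and integrating yields the domination
\[
\alpha_{\bE_\cB(\rho)}(t) \le \alpha_\rho(t), \qquad t \ge 0 .
\]
Since $\Phi'$ is eventually positive, this gives $0 \le \alpha_{f_n}(t)\,\Phi'(t) \le \alpha_\rho(t)\,\Phi'(t)$, and the right side tends to $0$ as $t \to \infty$ because $\rho \in \cP_\Phi$; so the tail condition in the definition of $\cP_\Phi$ holds for every $f_n$ and for $f$. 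The remaining requirement $\int_X |\Phi(f_n)|\,d\xi < \infty$ follows from the conditional Jensen bound $\int_X \Phi(f_n)\,d\xi \le \int_X \Phi(\rho)\,d\xi < \infty$ together with the uniform lower bound $\Phi \ge \Phi(t_o)$, which controls the negative part; the same applies to $f$. Thus $f, f_n \in \cP_\Phi$.

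For (b) and (c), by the dictionary of Corollary~\ref{cor_relateKudotoAsymp2nd} the hypothesis on $\cA$ transcribes into the corresponding limit property of $f = \bE_\cA(\rho)$ relative to $(f_n)$: a lower limit for the lower-semicontinuity statement (i), matched to Theorem~\ref{thm_upper/lower-limits}(i), and the upper-limit property for the upper-semicontinuity statement (ii), matched to Theorem~\ref{thm_upper/lower-limits}(ii). In case (i) this already yields $H^\Phi_\rho(\cA) \le \varliminf_n H^\Phi_\rho(\cA_n)$. In case (ii) the hypothesis $\int_1^\infty t^{-\beta}\Phi''(t)\,dt < \infty$ is assumed outright, while the uniform moment bound required by Theorem~\ref{thm_upper/lower-limits}(ii) follows from convexity of $t \mapsto t^{1+\beta}$ and conditional Jensen,
\[
\int_X f_n^{1+\beta}\,d\xi = \int_X (\bE_{\cA_n}\rho)^{1+\beta}\,d\xi \le \int_X \bE_{\cA_n}(\rho^{1+\beta})\,d\xi = \int_X \rho^{1+\beta}\,d\xi < \infty
\]
uniformly in $n$; Theorem~\ref{thm_upper/lower-limits}(ii) then gives $\varlimsup_n H^\Phi_\rho(\cA_n) \le H^\Phi_\rho(\cA)$.

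The genuinely substantive step, and the one I would be most careful about, is (a): that conditioning keeps $\rho$ inside $\cP_\Phi$, and in particular that the tail condition survives, which is precisely what the domination $\alpha_{\bE_\cB(\rho)} \le \alpha_\rho$ secures. Everything else is a mechanical transfer through Corollary~\ref{cor_relateKudotoAsymp2nd}. The one bookkeeping point to keep straight is the pairing of inputs with outputs: the lower-semicontinuity bound (i) rests on the lower-limit property and Theorem~\ref{thm_upper/lower-limits}(i), whereas the upper-semicontinuity bound (ii) is the one that calls on Theorem~\ref{thm_upper/lower-limits}(ii), whose hypothesis is the upper-limit property of $\bE_\cA(\rho)$ delivered by the upper-Kud\textoverline{o} correspondence; under a literal lower-limit hypothesis only the bound in (i) would be available.
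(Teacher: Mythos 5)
Your proposal is correct and takes essentially the same route as the paper: you reduce to Theorem \ref{thm_upper/lower-limits} by checking that conditioning preserves membership in $\cP_\Phi$ via the domination $\alpha_{\bE_\cB(\rho)} \le \alpha_\rho$ (your $(f-t)^+$ identity and conditional Jensen argument is equivalent to the paper's Lemma \ref{Lemma_rho} and Corollary \ref{Cor_Phireserve}, which use $|f-t|$ instead), transfer the Kud\textoverline{o}-limit hypotheses through Corollary \ref{cor_relateKudotoAsymp2nd}, and obtain the uniform moment bound from conditional Jensen, exactly as in Section \ref{sec:proofofentropycont}. You were also right to flag the pairing in (ii): the stated hypothesis ``lower limit'' there is a typo for the upper Kud\textoverline{o}-limit property, and the paper's own proof (point (3) of Section \ref{sec:proofofentropycont}) uses the upper-limit hypothesis just as you do.
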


\begin{remark}
One might believe that the lower semi-continuity-assertion in (i), at least in the case when $\Phi$ is given by \eqref{standard}, is a straightforward consequence of 
the entropic inequality (see e.g. \cite[Equation 5.1.3]{Bak}), which in particular implies that the map $f \mapsto \Ent_\xi(f)$ is lower semi-continuous
with respect to the weak topology on $\cP \subset L^1(X,\xi)$. Hence, if $(\cA_n)$ is a sequence of sub-$\sigma$-algebras of $\cB_X$, then for 
every $\rho \in \cP$, the family $(\bE_{\cA_n}(\rho))$ is weakly pre-compact by the Dunford-Pettis Theorem. Since we assume that $L^1(X,\xi)$ is 
separable, a diagonal argument shows that we can construct a non-negative operator $P : L^1(X,\xi) \ra L^1(X,\xi)$ with $P1 = 1$ such that the 
sequence $(\bE_{\cA_{n_k}})$ converges in the weak operator topology to $P$ along some sub-sequence $(n_k)$, whence 
\[
\varliminf_k \Ent_\xi(\bE_{\cA_{n_k}}(\rho)) \geq \Ent_\xi(P\rho), \quad \textrm{for all $\rho \in L^1(X,\xi)$}.
\]
If $P$ were the conditional expectation with respect to a lower Kud\textoverline{o}-limit of the sequence $(\cA_n)$, then (i) in Theorem \ref{thm_entropycont} would indeed follow. 
However, as is demonstrated by the examples \cite[Example 3.1]{Kudo}  and \cite[Example 2.1]{Art01}, the weak operator topology accumulation points of the 
sequence $(\bE_{\cA_n})$ might not contain any conditional expectation operators.
\end{remark}

The following corollary is immediate.

\begin{corollary}
\label{cor_kudocont}
Let $\rho \in \cP_\Phi$ and suppose that for some $\beta > 0$,
\[
 \int_1^\infty t^{-\beta} \, \Phi''(t) \, dt < \infty \qand \int_X \rho^{1+\beta} \, d\xi < \infty.
\]
Then the map $\cA \mapsto H^\Phi_\rho(\cA)$ is Kud\textoverline{o}-continuous, whence also continuous with respect to strong convergence.
\end{corollary}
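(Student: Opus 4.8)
The plan is to read the corollary off Theorem \ref{thm_entropycont} after translating the two defining properties of the Kud\textoverline{o}-limits $\cA^{\pm}$ into statements about upper and lower limits of densities, which is exactly what the correspondence recorded in Corollary \ref{cor_relateKudotoAsymp2nd} supplies. So the proof should be a pure bookkeeping combination of those two results, with no independent analytic content.

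First I would fix an arbitrary sequence $(\cA_n)$ of sub-$\sigma$-algebras of $\cB_X$ and let $\cA^+$ and $\cA^-$ denote its minimal upper and maximal lower Kud\textoverline{o}-limits. Since $\rho \in \cP_\Phi$ is in particular a non-negative density with $\int_X \rho \, d\xi = 1$, Corollary \ref{cor_relateKudotoAsymp2nd} applies to it: as $\cA^-$ is a lower Kud\textoverline{o}-limit of $(\cA_n)$, the density $\bE_{\cA^-}(\rho)$ is a lower limit of the sequence $(\bE_{\cA_n}(\rho))$, and as $\cA^+$ is an upper Kud\textoverline{o}-limit, the density $\bE_{\cA^+}(\rho)$ is an upper limit of $(\bE_{\cA_n}(\rho))$. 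In other words, $\cA^-$ is a lower limit and $\cA^+$ is an upper limit of $(\cA_n)$ in precisely the sense demanded by Theorem \ref{thm_entropycont}.

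Next I would apply the two halves of Theorem \ref{thm_entropycont}. Part (i), which requires only $\rho \in \cP_\Phi$, yields the lower bound $H^\Phi_\rho(\cA^-) \le \varliminf_n H^\Phi_\rho(\cA_n)$. Part (ii), whose hypotheses $\int_1^\infty t^{-\beta} \Phi''(t)\, dt < \infty$ and $\int_X \rho^{1+\beta}\, d\xi < \infty$ are exactly the standing assumptions of the corollary, yields the upper bound $\varlimsup_n H^\Phi_\rho(\cA_n) \le H^\Phi_\rho(\cA^+)$. Together these two inequalities are the very definition of Kud\textoverline{o}-continuity for the map $\cA \mapsto H^\Phi_\rho(\cA)$.

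Finally, to obtain continuity with respect to strong convergence I would invoke the remark following the definition of Kud\textoverline{o}-continuity: if $(\cA_n)$ converges strongly then $\cA^+ \sim_\xi \cA^-$, and writing $\cA$ for this common limit, the chain
\[
H^\Phi_\rho(\cA) = H^\Phi_\rho(\cA^-) \le \varliminf_n H^\Phi_\rho(\cA_n) \le \varlimsup_n H^\Phi_\rho(\cA_n) \le H^\Phi_\rho(\cA^+) = H^\Phi_\rho(\cA)
\]
collapses to a string of equalities, whence $\lim_n H^\Phi_\rho(\cA_n) = H^\Phi_\rho(\cA)$. I expect no genuine obstacle in this argument, as it merely assembles the two cited results; the only point requiring attention is matching the hypotheses correctly, namely that part (ii) is where the integrability conditions on $\Phi$ and $\rho$ are consumed, while part (i) holds unconditionally.
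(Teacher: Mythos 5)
Your proposal is correct and is essentially the paper's own argument: the paper deduces the corollary immediately from Theorem \ref{thm_entropycont}, applied with $\cA = \cA^-$ for the liminf bound and $\cA = \cA^+$ for the limsup bound (note that in part (ii) of that theorem ``lower limit'' is a typo for ``upper limit,'' which you have implicitly and correctly fixed), combined with the remark that strong convergence forces $\cA^+ \sim_\xi \cA^-$. Your extra detour through Corollary \ref{cor_relateKudotoAsymp2nd} is harmless but redundant, since that translation is already built into the proof of Theorem \ref{thm_entropycont}; if one does invoke it, note that as stated it quantifies over \emph{bounded} densities, so for a general $\rho \in \cP_\Phi$ one should appeal to steps (2)--(3) of Section \ref{sec:proofofentropycont}, whose proof works verbatim for any integrable density.
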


\begin{remark}
We stress that continuity of the map $\cA \mapsto H^\Phi_\rho(\cA)$ with respect to strong convergence can be established in simpler ways. 
\end{remark}

\subsection{Quantifying strong convergence in terms of entropy}

Let us now specialize our discussion to the standard entropy functional $\Ent_\xi$, associated to the convex function $\Phi$ given by \eqref{standard}.
Given $\rho \in \cP_\Phi$, we set
\[
H_\rho(\cA) = \Ent_\xi(\bE_{\cA}(\rho)), \quad \textrm{for $\cA \in \frak{S}(X,\xi)$}.
\]
We know by Corollary \ref{cor_kudocont} that $\cA \mapsto H_\rho(\cA)$ is continuous with respect to strong convergence. The following theorem
provides a kind of converse to this continuity: If $H_\rho(\cA_n) \ra H_\rho(\cA^+)$, where $\cA^+$ is an upper Kud\textoverline{o}-limit of $(\cA_n)$, then 
$\bE_{\cA_n}(\rho)$ converges to $\bE_{\cA^+}(\rho)$ in the $L^1$-norm.

\begin{theorem}
\label{thm_quantentropycont}
Let $\lambda \geq 1$ and suppose that $\rho : X \ra [\lambda^{-1},\lambda]$ is a measurable function such that $\int_X \rho \, d\xi = 1$.
Then, for every sequence $(\cA_n)$ of
sub-$\sigma$-algebras of $\cB_X$, we have
\begin{eqnarray*}
\varlimsup_n \, \| \bE_{\cA^{+}}(\rho) - \bE_{\cA_n}(\rho) \|_1 
&\leq & \sqrt{2} \Big( H_\rho(\cA^{+})- \varliminf_n H_\rho(\cA_n)\Big)^{1/2} \\
&\leq &  \sqrt{2} \Big( H_\rho(\cA^{+})- H_\rho(\cA^{-})\Big)^{1/2},
\end{eqnarray*}
where $\cA^{+}$ and $\cA^{-}$ denote upper and lower Kud\textoverline{o}-limits of $(\cA_n)$ respectively.
\end{theorem}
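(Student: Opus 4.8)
The plan is to treat the two inequalities separately: the second one follows immediately from the lower semi-continuity already proved, while the first one reduces to Pinsker's inequality together with a single asymptotic cross-term estimate, which is where the hypotheses really bite. Throughout write $\rho_n = \bE_{\cA_n}(\rho)$ and $\rho_+ = \bE_{\cA^+}(\rho)$; since $\rho$ takes values in $[\lambda^{-1},\lambda]$, so do $\rho_n$ and $\rho_+$, all entropies in sight are finite, and $\rho \in \cP_\Phi$ for $\Phi(t) = t\log t$. For the second inequality, note that $\cA^{-}$ is a lower Kudo-limit of $(\cA_n)$, so by Corollary \ref{cor_relateKudotoAsymp2nd} the density $\bE_{\cA^{-}}(\rho)$ is a lower limit of $(\rho_n)$; Theorem \ref{thm_entropycont}(i) then yields $H_\rho(\cA^{-}) \leq \varliminf_n H_\rho(\cA_n)$, which is exactly the second inequality after applying the monotone map $t \mapsto t^{1/2}$.

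For the first inequality I would start from Pinsker's inequality applied to the mutually absolutely continuous densities $\rho_+$ and $\rho_n$, which gives, for every $n$,
\[
\|\rho_+ - \rho_n\|_1^2 \leq 2\int_X \rho_+ \log(\rho_+/\rho_n)\, d\xi = 2 H_\rho(\cA^+) - 2\int_X \rho_+ \log \rho_n \, d\xi.
\]
Since $\log \rho_n$ is $\cA_n$-measurable we have $\int_X \rho \log \rho_n\, d\xi = \int_X \rho_n \log \rho_n \, d\xi = H_\rho(\cA_n)$, so the cross term can be rewritten as
\[
\int_X \rho_+ \log \rho_n \, d\xi = H_\rho(\cA_n) - \int_X (\rho - \rho_+)\log \rho_n \, d\xi,
\]
and the whole problem is reduced to proving that the error term $\eps_n := \int_X (\rho - \rho_+)\log \rho_n \, d\xi$ tends to $0$.

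The decisive point, and the step I expect to be the crux, is that $\cA^+$ and $\cA_n$ are \emph{not} nested, so the identity $\int_X \rho_+ \log \rho_n\, d\xi = H_\rho(\cA_n)$ fails term by term; only the asymptotic statement $\eps_n \to 0$ is available, and proving it is precisely where the upper-Kudo-limit property and the boundedness of $\rho$ are needed. Set $g := \rho - \rho_+ = (\mathrm{Id} - \bE_{\cA^+})(\rho)$, so that $\bE_{\cA^+}(g) = 0$ and hence $\|\bE_{\cA^+}(g)\|_1 = 0$. The defining inequality of an upper Kudo-limit then forces $\varlimsup_n \|\bE_{\cA_n}(g)\|_1 \leq \|\bE_{\cA^+}(g)\|_1 = 0$, i.e. $\bE_{\cA_n}(g) \to 0$ in $L^1$. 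Moving $\bE_{\cA_n}$ onto the $\cA_n$-measurable function $\log\rho_n$ and using $\|\log \rho_n\|_\infty \leq \log \lambda$ gives
\[
|\eps_n| = \Big| \int_X \bE_{\cA_n}(g)\, \log \rho_n \, d\xi \Big| \leq (\log \lambda)\, \|\bE_{\cA_n}(g)\|_1 \longrightarrow 0.
\]
Feeding this back yields $\|\rho_+ - \rho_n\|_1^2 \leq 2 H_\rho(\cA^+) - 2 H_\rho(\cA_n) + 2\eps_n$ for every $n$, and taking $\varlimsup_n$ (so that $-\varlimsup_n H_\rho(\cA_n)$ turns into $-\varliminf_n H_\rho(\cA_n)$ and the vanishing term $\eps_n$ disappears) produces exactly $\varlimsup_n \|\rho_+ - \rho_n\|_1 \leq \sqrt{2}\,\big(H_\rho(\cA^+) - \varliminf_n H_\rho(\cA_n)\big)^{1/2}$, completing the argument.
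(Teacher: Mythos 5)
Your proof is correct and follows essentially the same route as the paper: Pinsker--Csisz\'ar--Kullback applied to the pair $\bE_{\cA^+}(\rho)$, $\bE_{\cA_n}(\rho)$, the cross term $\int_X \bE_{\cA^+}(\rho)\log\bE_{\cA_n}(\rho)\,d\xi$ rewritten via $\cA_n$-measurability as $H_\rho(\cA_n)$ plus an error, and the error killed by $\|\log\bE_{\cA_n}(\rho)\|_\infty\le\log\lambda$ together with the upper-Kud\textoverline{o}-limit property applied to $\rho-\bE_{\cA^+}(\rho)$ (which is exactly the paper's Proposition \ref{Prop_NormUpper}, whose one-line proof you reproduce inline). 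Your treatment of the second inequality via Corollary \ref{cor_relateKudotoAsymp2nd} and Theorem \ref{thm_entropycont}(i) likewise matches the paper's remark.
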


\begin{remark}
The last inequality in Theorem \ref{thm_quantentropycont} follows from (i) of Theorem \ref{thm_entropycont}.
\end{remark}

\subsection{Averaged entropy}\label{subsec:av}

We now arrive at the second main theme of the paper, namely the notion of \emph{averaged entropy}. To define it, we need a measure
space $(T,\eta)$ and a measurable kernel $K : T \times X \ra [0,\infty)$ which satisfies 
\vspace{0.2cm}
\begin{itemize}
\item for every $t \in T$, the function $\rho_t = K(t,\cdot)$ on $X$ is measurable, satisfies 
\[
\int_X \rho_t \, d\xi = 1 \qand \int_T \Ent_\xi(\rho_t) \, d\eta(t) < \infty,
\]
and there exists $\lambda_t \geq 1$ such that $\rho_t$ only takes values in $[\lambda_t^{-1},\lambda_t]$. \\
\vspace{0.2cm}
\item The bounded linear map $\psi \mapsto f_\psi$ from $L^\infty(T,\eta)$ to $L^1(X,\xi)$, defined by
\[
f_\psi = \int_T \psi(t) \, \rho_t \, d\eta(t), \quad \textrm{for $\psi \in L^\infty(T,\eta)$}
\]
has a norm-dense image $V \subset L^1(X,\xi)$.
\end{itemize}
In Subsection \ref{subsec:bndtheory} below we shall see that Poisson boundaries of random walks on locally compact groups provide a large
collection of examples of $(T,\eta)$, $(X,\xi)$ and $K$ for which these conditions hold. \\

Given $(T,\eta)$ and a kernel $K$ as above, we define the \emph{$\eta$-averaged entropy} $h_\eta(\cA)$ of a sub-$\sigma$-algebra $\cA \subset \cB_X$ by 
\[
h_\eta(\cA) = \int_T H_{\rho_t}(\cA) \, d\eta(t).
\]
Again we obtain continuity for this notion of entropy.
\begin{theorem}\label{thm_Tcont}
Let $(T,\eta)$ be as above and $\cA_n$ be a convergent sequence in $\frak{S}(X,\xi)$ with limit $\cA\in \frak{S}(X,\xi)$, then $$h_{\eta}(\cA_n)\to h_{\eta}(\cA).$$
\end{theorem}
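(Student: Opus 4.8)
The plan is to reduce the statement to a pointwise-in-$t$ continuity result and then pass to the limit under the integral sign via the dominated convergence theorem. First, recall that convergence of $(\cA_n)$ to $\cA$ in $\frak{S}(X,\xi)$ means \emph{strong} convergence, i.e.\ $\cA$ is simultaneously the minimal upper and maximal lower Kud\textoverline{o}-limit of the sequence. Fix $t \in T$. Since $\rho_t$ takes values in the bounded interval $[\lambda_t^{-1},\lambda_t]$, it lies in $\cP_\Phi$ for the standard $\Phi(s) = s \log s$ (boundedness makes both $\int_X |\Phi(\rho_t)| \, d\xi < \infty$ and the tail condition $\alpha_{\rho_t}(s)\Phi'(s) \to 0$ trivial, as $\alpha_{\rho_t}$ vanishes past $\lambda_t$). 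Moreover $\int_X \rho_t^{1+\beta} \, d\xi \leq \lambda_t^{1+\beta} < \infty$ for every $\beta > 0$, while $\int_1^\infty s^{-\beta} \Phi''(s) \, ds = \int_1^\infty s^{-1-\beta} \, ds < \infty$. Thus Corollary \ref{cor_kudocont} applies and yields that $\cA' \mapsto H_{\rho_t}(\cA')$ is continuous with respect to strong convergence, so that
\[
H_{\rho_t}(\cA_n) \lra H_{\rho_t}(\cA) \qquad \textrm{for every fixed } t \in T.
\]

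Next I would produce an $\eta$-integrable majorant that is uniform in $n$. Recall the remark that $\cA' \mapsto H^\Phi_\rho(\cA')$ is increasing with respect to inclusions; since $\cA_n \subset \cB_X$ and $\bE_{\cB_X}(\rho_t) = \rho_t$, this gives
\[
0 \leq H_{\rho_t}(\cA_n) \leq H_{\rho_t}(\cB_X) = \Ent_\xi(\rho_t) \qquad \textrm{for all } n,
\]
the lower bound being the non-negativity of $\Ent_\xi$ noted just after its definition. By the standing hypotheses on $(T,\eta,K)$ the function $t \mapsto \Ent_\xi(\rho_t)$ is $\eta$-integrable. Hence the integrands $t \mapsto H_{\rho_t}(\cA_n)$ are all dominated by a single $\eta$-integrable function, and together with the pointwise convergence above the dominated convergence theorem delivers
\[
h_\eta(\cA_n) = \int_T H_{\rho_t}(\cA_n) \, d\eta(t) \lra \int_T H_{\rho_t}(\cA) \, d\eta(t) = h_\eta(\cA),
\]
which is the assertion. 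Note that no Kud\textoverline{o}-machinery beyond continuity under strong convergence is needed here, since $\cA$ is assumed to be a genuine strong limit.

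The one point that genuinely requires attention — and which I expect to be the main obstacle — is the joint measurability underlying the displayed integrals: one must check that, for each fixed sub-$\sigma$-algebra $\cA'$, the map $t \mapsto H_{\rho_t}(\cA') = \Ent_\xi(\bE_{\cA'}(\rho_t))$ is $\eta$-measurable (the hypothesis $\int_T \Ent_\xi(\rho_t)\,d\eta(t) < \infty$ already presupposes this for $\cA' = \cB_X$, and I would want the analogous fact for each $\cA_n$ and for $\cA$). I would isolate this as a short measurability lemma, deriving it from the measurability of the kernel $K$: approximating $\rho_t$ in $L^1$ by kernel-measurable simple functions, using that $\bE_{\cA'}$ is an $L^1$-contraction and that $\rho \mapsto \Ent_\xi(\bE_{\cA'}(\rho))$ is continuous on the relevant set of bounded densities, one obtains $t \mapsto H_{\rho_t}(\cA')$ as a pointwise limit of measurable functions, hence measurable. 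With this lemma in place, the dominated convergence step above is routine.
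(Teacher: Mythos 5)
Your proposal is correct and follows essentially the same route as the paper's own proof: verify the hypotheses of Corollary \ref{cor_kudocont} for each fixed $t$ (using boundedness of $\rho_t$ and the explicit form of $\Phi''$), deduce pointwise convergence $H_{\rho_t}(\cA_n)\to H_{\rho_t}(\cA)$, and then integrate over $T$ using the integrability of $t\mapsto \Ent_\xi(\rho_t)$ from assumption (I). If anything, you are more careful than the paper, which loosely invokes the ``Bounded Convergence Theorem'' where your explicit majorant $H_{\rho_t}(\cA_n)\le \Ent_\xi(\rho_t)$ (via Jensen monotonicity) and your remark on the $\eta$-measurability of $t\mapsto H_{\rho_t}(\cA')$ make the dominated-convergence step airtight.
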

Note that the assumptions on the kernel allow us to apply Corollary~\ref{cor_kudocont}, which already proves the above theorem.\\

The following question will occupy us from now. Let $(\cA_n)$ be a sequence of sub-$\sigma$-algebras of $\cB_X$ and let $\cA$ be a sub-$\sigma$-algebra of $\cB_X$. Suppose that $h_\eta(\cA_n) \ra h_\eta(\cA)$. Does this imply that $\cA_n \ra \cA$ strongly?  In other words, can we determine whether $(\cA_n)$ converges strongly to $\cA$ by considering whether or not the sequence 
$(h_\eta(\cA_n))$ of real numbers converges to $h_\eta(\cA)$?  \\

Our next theorem provides an affirmative answer in the cases
when $\cA$ is either the trivial $\sigma$-algebra (modulo null sets) or an upper Kud\textoverline{o}-limit of 
$(\cA_n)$. We shall prove it in Section \ref{sec:kernelconv}.

\begin{theorem}
\label{thm_kernelconv}
Let $(\cA_n)$ be a sequence of sub-$\sigma$-algebras of $\cB_X$. 
\vspace{0.2cm}
\begin{enumerate}
\item[(i)] Suppose that $h_\eta(\cA_n) \ra 0$. Then the sequence $(\cA_n)$ converges strongly to the trivial $\sigma$-algebra (modulo $\xi$-null sets).
\vspace{0.2cm}
\item[(ii)] Let $\cA$ be an upper Kud\textoverline{o}-limit of $(\cA_n)$. If $h_\eta(\cA_n) \ra h_\eta(\cA)$, then $\cA_n \ra \cA$ strongly.
\end{enumerate}
\end{theorem}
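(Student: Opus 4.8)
The plan is to reduce both statements to controlling the averaged $L^1$-discrepancy
\[
\Delta_n := \int_T \|\bE_{\cA}(\rho_t) - \bE_{\cA_n}(\rho_t)\|_1 \, d\eta(t)
\]
(with $\cA$ the trivial $\sigma$-algebra in part (i), so that $\bE_{\cA}(\rho_t) = 1$), and then to propagate this to all of $L^1(X,\xi)$. For the propagation, recall that every $v$ in the dense subspace $V$ has the form $v = f_\psi = \int_T \psi(t)\rho_t\,d\eta(t)$ with $\psi \in L^\infty(T,\eta)$; since conditional expectation commutes with this vector-valued integral, $\bE_{\cB}(f_\psi) = \int_T \psi(t)\,\bE_{\cB}(\rho_t)\,d\eta(t)$ for every sub-$\sigma$-algebra $\cB$, whence $\|\bE_{\cA_n}(f_\psi) - \bE_{\cA}(f_\psi)\|_1 \le \|\psi\|_\infty\,\Delta_n$. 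As each $\bE_{\cB}$ is an $L^1$-contraction and $V$ is norm-dense, a three-epsilon argument then upgrades convergence on $V$ to strong convergence on $L^1(X,\xi)$. So it suffices to show $\Delta_n \to 0$ in each part. Throughout I use that $\eta$ is finite (forced by $f_1 \in L^1(X,\xi)$) and that each $\rho_t$, hence each $\bE_{\cB}(\rho_t)$, is bounded by $\lambda_t$, which secures the integrability hypotheses of Theorems~\ref{thm_upper/lower-limits} and \ref{thm_quantentropycont} for the standard $\Phi$.

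For (i) I would apply the Pinsker inequality to the density $\bE_{\cA_n}(\rho_t)$ (of $\xi$-integral $1$), giving $\|\bE_{\cA_n}(\rho_t) - 1\|_1 \le \sqrt{2\,H_{\rho_t}(\cA_n)}$ pointwise in $t$. Integrating and applying Cauchy--Schwarz,
\[
\Delta_n = \int_T \|\bE_{\cA_n}(\rho_t) - 1\|_1\,d\eta(t) \le \sqrt{2\,\eta(T)}\;h_\eta(\cA_n)^{1/2} \longrightarrow 0
\]
since $h_\eta(\cA_n) \to 0$ by hypothesis; together with the reduction this gives strong convergence to the trivial $\sigma$-algebra.

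For (ii) the natural tool is Theorem~\ref{thm_quantentropycont}, which for each fixed $t$ bounds $\varlimsup_n \|\bE_{\cA}(\rho_t)-\bE_{\cA_n}(\rho_t)\|_1$ by $\sqrt 2\,\bigl(H_{\rho_t}(\cA) - \varliminf_n H_{\rho_t}(\cA_n)\bigr)^{1/2}$. The obstacle is that the pointwise data is only controlled at the level of $\varlimsup$: via the correspondence of Corollary~\ref{cor_relateKudotoAsymp2nd} (so $\bE_{\cA}(\rho_t)$ is an upper limit of $(\bE_{\cA_n}(\rho_t))_n$) together with Theorem~\ref{thm_upper/lower-limits}(ii), I obtain only $\varlimsup_n H_{\rho_t}(\cA_n) \le H_{\rho_t}(\cA)$, whereas the bound requires the $\varliminf$ to equal $H_{\rho_t}(\cA)$. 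To bridge this I would pass through $L^1(\eta)$. Writing $g_n(t)=H_{\rho_t}(\cA_n)$ and $g(t)=H_{\rho_t}(\cA)$, conditional Jensen gives $0 \le g_n \le \Ent_\xi(\rho_\cdot) \in L^1(\eta)$; the pointwise inequality $\varlimsup_n g_n \le g$ makes $(g_n-g)^+ \to 0$ pointwise, so dominated convergence yields $\int_T (g_n-g)^+\,d\eta \to 0$, and combining with $\int_T g_n\,d\eta = h_\eta(\cA_n) \to h_\eta(\cA) = \int_T g\,d\eta$ forces $\int_T (g-g_n)^+\,d\eta \to 0$ as well. Hence $g_n \to g$ in $L^1(\eta)$.

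Finally I would run a routine subsequence argument. Given any subsequence of $(\Delta_n)$, the $L^1(\eta)$-convergence $g_n \to g$ lets me extract a further subsequence $(\cA_{n_{k_j}})_j$ with $g_{n_{k_j}} \to g$ $\eta$-almost everywhere, so that $\varliminf_j H_{\rho_t}(\cA_{n_{k_j}}) = H_{\rho_t}(\cA)$ for a.e.\ $t$. Since $\cA$ is an upper Kud\textoverline{o}-limit of every subsequence, Theorem~\ref{thm_quantentropycont} applied to $(\cA_{n_{k_j}})_j$ now forces $\|\bE_{\cA}(\rho_t)-\bE_{\cA_{n_{k_j}}}(\rho_t)\|_1 \to 0$ for a.e.\ $t$; this quantity being bounded by $2$ with $\eta$ finite, dominated convergence gives the corresponding averaged discrepancy $\to 0$. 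As every subsequence admits such a further subsequence, $\Delta_n \to 0$, and the reduction of the first paragraph completes the proof. The crux, as indicated, is exactly the passage from the one-sided pointwise bound $\varlimsup_n H_{\rho_t}(\cA_n) \le H_{\rho_t}(\cA)$ to the two-sided control that annihilates the right-hand side of Theorem~\ref{thm_quantentropycont}, resolved by $L^1(\eta)$-convergence plus almost-everywhere extraction.
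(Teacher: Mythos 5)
Your proposal is correct and follows essentially the same route as the paper's proof: the same reduction to the dense image $V$ of the kernel map, Pinsker plus Cauchy--Schwarz for (i), and for (ii) the same upgrade of the hypothesis $h_\eta(\cA_n)\to h_\eta(\cA)$ together with the pointwise bound $\varlimsup_n H_{\rho_t}(\cA_n)\le H_{\rho_t}(\cA)$ (from Theorem \ref{thm_entropycont}(ii)) to $L^1(\eta)$-convergence of $t\mapsto H_{\rho_t}(\cA_n)$, followed by almost-everywhere subsequence extraction, Theorem \ref{thm_quantentropycont} pointwise, and dominated convergence. The only difference is bookkeeping: you establish the $L^1(\eta)$-convergence via positive parts $(g_n-g)^{\pm}$ and dominated convergence, where the paper (in Lemma \ref{Lemma_subseq}) integrates over the set $\{\varphi<\varphi_m\}$ and invokes the limsup-version of Fatou's Lemma --- the two manipulations are equivalent.
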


\subsection{An application to the entropy spectrum of random walks on groups}

Let us now discuss the main application in this paper of the theory developed in the previous theorems. Let $G$ be a locally compact, second countable and compactly generated 
group, and fix a \emph{left}-invariant Haar measure $m_G$ on $G$. Define the probability measure $d\mu = u \, dm_G$ on $G$, where we assume that $u$ is a continuous function on $G$ with relatively compact support such that \begin{equation}\label{generating}G = \bigcup_{k \geq 1} \big\{ u^{*k} > 0 \big\}.\end{equation} We shall refer to $(G,\mu)$ as a measured group. 
Given a $\mu$-stationary Borel $(G,\mu)$-space (we refer the reader to Section \ref{sec:Borelprel} for definitions and notation), we define its \emph{$\mu$-entropy} (first introduced by Furstenberg in \cite{Fur1}) by
\begin{equation}
\label{def_FuEnt}
h_{(G,\mu)}(X,\xi) = \int_G \Big( -\int_X \log \frac{dg^{-1}\xi}{d\xi} \, d\xi \Big) \, d\mu(g).
\end{equation}
The reader might recognize the $\mu$-entropy of $(X,\xi)$ as the $\mu$-integral of the Kullback-Leibler divergence between the measures $\xi$ and $g^{-1}\xi$. \\

Starting with the works \cite{NZ1,NZ2,N} by Nevo and Zimmer, the topological and categorical structure of the image of the map
\[
(X,\xi) \mapsto h_{(G,\mu)}(X,\xi),
\] 
as $(X,\xi)$ ranges over various sub-classes of $\mu$-stationary Borel $G$-spaces, have been subject to intense studies (see e.g. \cite{BHT,HY,TZ}). 
In this paper we shall be concerned with the restriction of this map to the class of $\mu$-boundaries (we refer the reader to Section \ref{sec:Borelprel} for the necessary definitions and notation). The \emph{Poisson boundary} of the measured group $(G,\mu)$, that we denote by $(B,\nu)$ is the maximal $\mu$-boundary, in the sense 
that all other $\mu$-boundaries are $G$-equivariant images of $(B,\nu)$. The one-point space (on which $G$ acts trivially) is the minimal $\mu$-boundary, and is called the \emph{trivial $\mu$-boundary}. \\

In order to state our main results here, we need to be able to talk about limits of $\mu$-boundaries. By this we shall mean the following. 
Let $((Z_n,\theta_n))$ be a sequence of $\mu$-boundaries, together with measurable and $G$-equivariant maps $\pi_n : (B,\nu) \ra (Z_n,\theta_n)$. Consider the sequence 
$(\cA_n)$ of $G$-invariant sub-$\sigma$-algebras defined by $\cA_n = \pi_n^{-1}(\cB_{Z_n})$ (where $\cB_{Z_n}$, as usual, is the sigma algebra implicit of  $(Z_n,\theta_n)$). We denote by $\cA^{+}$ and $\cA^{-}$ the minimal upper 
and maximial lower Kud\textoverline{o}-limit of the sequence $(\cA_n)$ respectively. We prove in Lemma \ref{Kudoinvariant} that both $\cA^+$ and $\cA^-$ 
are $G$-invariant, whence there are $\mu$-boundaries $(Z^+,\theta^+)$ and $(Z^-,\theta^-)$, together with measurable and $G$-equivariant maps
\[
\pi_{\pm} : (B,\nu) \ra (Z^{\pm},\theta^{\pm})
\] 
such that $\pi_{\pm}^{-1}(\cB_{Z^{\pm}}) = \cA^{\pm}$ (modulo $\nu$-null sets). We shall refer to 
$(Z^+,\theta^+)$ and $(Z^-,\theta^-)$ as the \emph{minimal upper and maximal upper Kud\textoverline{o}-limits of $((Z_n,\theta_n))$} respectively.
In particular, we say that $((Z_n,\theta_n))$ \emph{converges strongly} to $(Z^+,\theta^+)$ if $(\cA_n)$ converges strongly to $(\cA^+)$, or equivalently,
if $\cA^+ = \cA^-$ (modulo $\nu$-null sets). \\

We say that 
$(G,\mu)$
\textit{has bounded Radon-Nikodym derivatives}, if for every $g\in G$ there exists a constant $\lambda_g>1$ such that $\rho_g^{\nu}(b)=\frac{d g\nu}{d \nu}(b)\in[\lambda_g^{-1},\lambda_g]$ for $\nu$-a.e.\ $b\in B$. It is well known that any discrete group with  generating measure (see eq.~\ref{generating}) has bounded Radon-Nikodym derivatives. In Lemma~\ref{lemma_RNbounded} we will see that in fact a large class of measured groups has bounded Radon-Nikodym derivatives.
Bounded Radon-Nikodym derivatives satisfy the conditions of being a kernel in the sense of Subsection~\ref{subsec:av} (see Section~\ref{sec:FEnt}). Thus we can apply Theorem~\ref{thm_Tcont}  and obtain a continuity statement as follows.
\begin{theorem}\label{thm_bndcont} Let $(G,\mu)$ be a measured group with  bounded Radon-Nikodym derivatives. Then for every  sequence $((Z_n,\theta_n))$ of $\mu$-boundaries which converges strongly to a $\mu$-boundary $(Z,\theta)$, it holds that 
$$h_{(G,\mu)}(Z_n,\theta_n)\to h_{(G,\mu)}(Z,\theta),$$ i.e.\ $h_{(G,\mu)}$ is continuous w.r.t.\ the strong topology.
\end{theorem}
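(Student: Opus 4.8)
The plan is to realise the Furstenberg $\mu$-entropy of a $\mu$-boundary as an averaged entropy (in the sense of Subsection~\ref{subsec:av}) of the corresponding $G$-invariant sub-$\sigma$-algebra of the Poisson boundary, and then to read off the conclusion from Theorem~\ref{thm_Tcont}.

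First I would fix the Poisson boundary $(B,\nu)$ and instantiate the averaged-entropy framework with $(T,\eta) = (G,\mu)$, $(X,\xi) = (B,\nu)$, and kernel $K(g,b) = \rho_g^{\nu}(b) = \frac{dg\nu}{d\nu}(b)$. For each $g$ the function $\rho_g^\nu$ is a probability density on $(B,\nu)$, and the assumption of bounded Radon-Nikodym derivatives supplies exactly the two-sided bounds $\rho_g^\nu \in [\lambda_g^{-1},\lambda_g]$ required of a kernel. The remaining kernel axioms, namely the integrability $\int_G \Ent_\nu(\rho_g^\nu)\, d\mu(g) < \infty$ (which is finiteness of the $\mu$-entropy of $(B,\nu)$) and the norm-density in $L^1(B,\nu)$ of the image $V$, are established in Section~\ref{sec:FEnt} (cf.\ Lemma~\ref{lemma_RNbounded}). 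Taking $\Phi$ as in \eqref{standard}, this makes the averaged entropy $h_\mu(\cA) = \int_G H_{\rho_g^\nu}(\cA)\, d\mu(g)$ available.

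The crucial step is the identity relating the two entropies. Given a $\mu$-boundary $(Z,\theta)$ with $G$-equivariant factor map $\pi : (B,\nu) \ra (Z,\theta)$ and $\cA = \pi^{-1}(\cB_Z)$, I would first rewrite the integrand of \eqref{def_FuEnt} as $\Ent_\theta(dg\theta/d\theta)$ by a change of variables, and then invoke the standard fact that the Radon-Nikodym cocycle of a factor is the conditional expectation of the one upstairs,
\[
\frac{dg\theta}{d\theta} \circ \pi = \bE_{\cA}\big(\rho_g^\nu\big) \quad \textrm{$\nu$-a.e.}
\]
Since $\bE_\cA(\rho_g^\nu)$ is $\cA$-measurable and $\theta = \pi_*\nu$, this yields $\Ent_\theta(dg\theta/d\theta) = \Ent_\nu(\bE_\cA(\rho_g^\nu)) = H_{\rho_g^\nu}(\cA)$, and integrating over $g$ gives
\[
h_{(G,\mu)}(Z,\theta) = \int_G H_{\rho_g^\nu}(\cA)\, d\mu(g) = h_\mu(\cA).
\]

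With this dictionary in hand the theorem is immediate: by definition, strong convergence of $((Z_n,\theta_n))$ to $(Z,\theta)$ means that $(\cA_n)$, with $\cA_n = \pi_n^{-1}(\cB_{Z_n})$, converges strongly to $\cA$ in $\frak{S}(B,\nu)$, so Theorem~\ref{thm_Tcont} gives $h_\mu(\cA_n) \to h_\mu(\cA)$, which is precisely $h_{(G,\mu)}(Z_n,\theta_n) \to h_{(G,\mu)}(Z,\theta)$. The main obstacle I anticipate is not this final deduction but the groundwork it rests on: honestly verifying the kernel axioms for $K$ (above all the density of $V$ in $L^1(B,\nu)$ and the finiteness of $\int_G \Ent_\nu(\rho_g^\nu)\, d\mu(g)$) and carefully justifying the conditional-expectation formula for the factor cocycle; once these are secured, continuity follows directly from the averaged-entropy theory.
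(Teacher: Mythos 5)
Your strategy is the paper's own: realise the Furstenberg entropy of a $\mu$-boundary as an averaged entropy of the corresponding $G$-invariant sub-$\sigma$-algebra of the Poisson boundary (via the factor identity $\rho_g^\theta\circ\pi=\bE_{\cA}(\rho_g^\nu)$, i.e.\ Lemma \ref{lemma_RNfactor}), and then quote Theorem \ref{thm_Tcont}. Your entropy identity $h_{(G,\mu)}(Z,\theta)=\int_G H_{\rho_g^\nu}(\cA)\,d\mu(g)$ is correct, and your verification of kernel axiom (I) for $(T,\eta)=(G,\mu)$ is fine (boundedness is the standing hypothesis, and $\int_G\Ent_\nu(\rho_g^\nu)\,d\mu(g)\le\int_G\log\lambda_g\,d\mu(g)<\infty$ by local boundedness of $g\mapsto\lambda_g$ and compactness of $\supp u$). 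The gap is in kernel axiom (II). You instantiate the framework with $(T,\eta)=(G,\mu)$ and assert that the norm-density of the image $V$ in $L^1(B,\nu)$ is ``established in Section \ref{sec:FEnt}''; it is not, and the citation to Lemma \ref{lemma_RNbounded} is to the Harnack bound, not to a density statement. The density statement is Lemma \ref{lemma_poissonmap}, and it is proved only for the measure $\eta_\mu=\sum_{k\ge 1}2^{-k}\mu^{*k}$, not for $\mu$. The difference is essential: in the Hahn--Banach argument of that lemma, an annihilating $\phi\in L^\infty(B,\nu)$ satisfies $P_\nu\phi=0$ $\eta$-almost everywhere, and because $\eta_\mu$ has an \emph{everywhere positive} density with respect to $m_G$ (this is exactly where the generating condition \eqref{generating2} enters) and $P_\nu\phi$ is continuous, one gets $P_\nu\phi\equiv 0$ on all of $G$, whence $\phi=0$ by injectivity of the Poisson transform. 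With $\eta=\mu$, whose density $u$ vanishes outside a compact set, the same argument only shows that the bounded $\mu$-harmonic function $P_\nu\phi$ vanishes on the open set $\{u>0\}$; harmonicity propagates values of $P_\nu\phi$ at $g$ from its values on $g\,\supp\mu$, not the other way around, so there is no unique-continuation principle available to conclude $P_\nu\phi\equiv 0$. Thus axiom (II) for $(G,\mu)$ is unverified, and your appeal to Theorem \ref{thm_Tcont} invokes a hypothesis you have not secured.

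There are two ways to repair this. The paper's way: replace $\mu$ by $\eta_\mu$ throughout, using Lemma \ref{lemma_entropy} to write $h_{(G,\mu)}(Z,\theta)=\gamma^{-1}\int_G H_{\rho_g^\nu}(\cA)\,d\eta_\mu(g)=\gamma^{-1}h_{\eta_\mu}(\cA)$ with $\gamma=\sum_{k\ge1}k\,2^{-k}$; then axiom (II) is precisely Lemma \ref{lemma_poissonmap} and Theorem \ref{thm_Tcont} applies verbatim. Alternatively, one can keep $(T,\eta)=(G,\mu)$ after observing that the \emph{proof} of Theorem \ref{thm_Tcont} never uses axiom (II): it only needs Corollary \ref{cor_kudocont} pointwise in $t$ together with the bounded convergence theorem, dominated by the $\eta$-integrable function $t\mapsto\Ent_\xi(\rho_t)$ from axiom (I). But that requires re-stating or re-proving Theorem \ref{thm_Tcont} under weaker hypotheses; as written, your proposal rests on the false claim that the density axiom for $(G,\mu)$ is already established in the paper.
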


It is known that for groups with Kazhdan's property (T), there is a gap in the $\mu$-entropy (see e.g.\cite{N,BHT}) hence one concludes the following:
\begin{corollary}\label{cor_propT} Let $(G,\mu)$ be a measured group with bounded Radon-Nikodym derivatives. If $G$ has Kazhdan's property (T), then the trivial $\mu$-boundary is isolated in the topology of strongly convergent $\mu$-boundaries.
\end{corollary}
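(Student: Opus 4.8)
The plan is to combine the continuity statement of Theorem~\ref{thm_bndcont} with the entropy gap afforded by property (T), and the argument is genuinely short once those two inputs are in place. First I would record two preliminary facts. The trivial $\mu$-boundary is the one-point $G$-space, for which $g^{-1}\xi = \xi$ for every $g \in G$; hence $\frac{dg^{-1}\xi}{d\xi} \equiv 1$ and formula~\eqref{def_FuEnt} gives that its $\mu$-entropy equals $0$. Second, the hypothesis that $G$ has Kazhdan's property (T) supplies, via the cited gap results \cite{N,BHT}, a uniform constant $\varepsilon > 0$ such that every \emph{nontrivial} $\mu$-boundary $(Z,\theta)$ satisfies $h_{(G,\mu)}(Z,\theta) \geq \varepsilon$; that is, no $\mu$-boundary has $\mu$-entropy strictly between $0$ and $\varepsilon$.

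Since $L^1(B,\nu)$ is separable, strong convergence of sub-$\sigma$-algebras is metrizable, and the strong convergence of $\mu$-boundaries defined in the previous subsection is the strong convergence of the associated $G$-invariant $\sigma$-algebras $\cA_n = \pi_n^{-1}(\cB_{Z_n})$. Consequently the trivial $\mu$-boundary is isolated in the strong topology precisely when no sequence of nontrivial $\mu$-boundaries converges strongly to it. I would therefore argue as follows: suppose $((Z_n,\theta_n))$ is a sequence of $\mu$-boundaries converging strongly to the trivial $\mu$-boundary. Because $(G,\mu)$ has bounded Radon-Nikodym derivatives, the hypotheses of Theorem~\ref{thm_bndcont} are met, so $h_{(G,\mu)}(Z_n,\theta_n) \to h_{(G,\mu)}(\text{trivial}) = 0$.

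In particular $h_{(G,\mu)}(Z_n,\theta_n) < \varepsilon$ for all sufficiently large $n$, and by the gap this forces each such $(Z_n,\theta_n)$ to be the trivial $\mu$-boundary. Hence the sequence is eventually trivial, no sequence of nontrivial $\mu$-boundaries converges strongly to the trivial one, and the trivial $\mu$-boundary is isolated, as claimed.

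I do not expect a genuine mathematical obstacle beyond two bookkeeping verifications. The first is to confirm that the gap cited from \cite{N,BHT} is exactly a strictly positive \emph{lower bound} $\varepsilon$ on the $\mu$-entropy of nontrivial $\mu$-boundaries (rather than a gap located elsewhere in the spectrum), so that the implication ``$h_{(G,\mu)} < \varepsilon \Rightarrow$ trivial'' is valid for the class of $\mu$-boundaries we restrict to. The second is the legitimacy of reading ``isolated'' through sequences, which is justified here by the separability of $L^1(B,\nu)$ and the resulting metrizability of strong convergence; with that in hand the sequential argument above is equivalent to producing a strong neighbourhood of the trivial boundary meeting no other $\mu$-boundary. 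All the analytic substance has already been absorbed into Theorem~\ref{thm_bndcont} and the cited spectral gap.
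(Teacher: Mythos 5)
Your proposal is correct and is exactly the paper's (implicit) argument: the paper derives Corollary~\ref{cor_propT} immediately from the continuity statement of Theorem~\ref{thm_bndcont} together with the property (T) entropy gap of \cite{N,BHT}, which is precisely the combination you spell out. Your two bookkeeping checks (that the gap is a positive lower bound on the entropy of nontrivial $\mu$-boundaries, and that isolation may be tested sequentially thanks to separability of $L^1(B,\nu)$) are the right ones and pose no obstacle.
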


It is natural  to ask about the converse direction of Theorem~\ref{thm_bndcont} which leads to our next result. See Section~\ref{sec:FEnt} for a proof using Theorem~\ref{thm_kernelconv}. 

\begin{theorem}
\label{thm_FurstenbergEntropy} Given $(G,\mu)$ as above with bounded Radon-Nikodym derivatives.
Let $((Z_n,\theta_n))$ be a sequence of $\mu$-boundaries. 
\vspace{0.2cm}
\begin{enumerate}
\item[(i)] Suppose that $h_{(G,\mu)}(Z_n,\theta_n) \ra 0$. Then $((Z_n,\theta_n))$ converges strongly to the trivial $\mu$-boundary. 
\vspace{0.1cm}
\item[(ii)] Let $(Z^+,\theta^+)$ be an upper Kud\textoverline{o}-limit of $((Z_n,\theta_n))$. If $h_{(G,\mu)}(Z_n,\theta_n) \ra h_{(G,\mu)}(Z^+,\theta^+)$, 
then the sequence $((Z_n,\theta_n))$ converges strongly to $(Z^+,\theta^+)$.
\end{enumerate}
In particular, if a sequence of $\mu$-boundaries $((Z_n,\theta_n))$ satisfies that $h_{(G,\mu)}(Z_n,\theta_n) \ra h_{(G,\mu)}(B,\nu)$, then the sequence $((Z_n,\theta_n))$ converges strongly to the Poisson boundary $(B,\nu)$. 
\end{theorem}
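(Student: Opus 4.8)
The plan is to realize the Furstenberg entropy of a $\mu$-boundary as the averaged entropy $h_\eta$ of the corresponding $G$-invariant sub-$\sigma$-algebra of the Poisson boundary, and then to quote Theorem~\ref{thm_kernelconv} essentially verbatim. Concretely, I would work on $(X,\xi)=(B,\nu)$ and take $(T,\eta)=(G,\mu)$ with kernel $K(g,b)=\rho_g^\nu(b)=\frac{dg\nu}{d\nu}(b)$. The hypothesis of bounded Radon--Nikodym derivatives is exactly the requirement that each $\rho_g=K(g,\cdot)$ take values in $[\lambda_g^{-1},\lambda_g]$; together with $\int_B \rho_g\,d\nu=1$, the finiteness of $\int_G \Ent_\nu(\rho_g)\,d\mu(g)=h_{(G,\mu)}(B,\nu)$, and the norm-density of the image $V\subset L^1(B,\nu)$ (coming from the generating condition \eqref{generating}), this exhibits $K$ as a kernel in the sense of Subsection~\ref{subsec:av}; the verification belongs to Section~\ref{sec:FEnt}. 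Each $\mu$-boundary $(Z_n,\theta_n)$, with its $G$-map $\pi_n:B\to Z_n$, yields the $G$-invariant $\sigma$-algebra $\cA_n=\pi_n^{-1}(\cB_{Z_n})$, and by definition strong convergence (resp.\ upper Kud\textoverline{o}-limits, resp.\ the trivial boundary) of the boundaries means the same for the $\cA_n$; invariance of the Kud\textoverline{o}-limits is Lemma~\ref{Kudoinvariant}.

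The key step is the entropy identity $h_\eta(\cA_n)=h_{(G,\mu)}(Z_n,\theta_n)$, which I would isolate as a lemma. For $\cA=\pi^{-1}(\cB_Z)$, the defining property of conditional expectation together with $G$-equivariance of $\pi$ gives
\[
\bE_\cA(\rho_g)=\Big(\tfrac{dg\theta}{d\theta}\Big)\circ\pi,
\]
so that $\Ent_\nu(\bE_\cA(\rho_g))=\int_Z \tfrac{dg\theta}{d\theta}\log\tfrac{dg\theta}{d\theta}\,d\theta$, the Kullback--Leibler divergence $D(g\theta\,\|\,\theta)$. Using the Radon--Nikodym cocycle identity $\tfrac{dg^{-1}\theta}{d\theta}(z)=\big(\tfrac{dg\theta}{d\theta}(gz)\big)^{-1}$ to change variables in the integral defining $D(g\theta\,\|\,\theta)$ converts it, for each $g$, into $-\int_Z\log\tfrac{dg^{-1}\theta}{d\theta}\,d\theta$, which is precisely the integrand of Furstenberg's formula \eqref{def_FuEnt}. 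Integrating over $G$ then gives $h_\eta(\cA)=h_{(G,\mu)}(Z,\theta)$.

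With this dictionary in place, both parts follow directly. For (i), the hypothesis $h_{(G,\mu)}(Z_n,\theta_n)\to 0$ reads $h_\eta(\cA_n)\to 0$, so Theorem~\ref{thm_kernelconv}(i) gives that $(\cA_n)$ converges strongly to the trivial $\sigma$-algebra, i.e.\ $((Z_n,\theta_n))$ converges strongly to the trivial $\mu$-boundary. For (ii), if $(Z^+,\theta^+)$ is an upper Kud\textoverline{o}-limit then $\cA^+=\pi_+^{-1}(\cB_{Z^+})$ is an upper Kud\textoverline{o}-limit of $(\cA_n)$ and $h_\eta(\cA^+)=h_{(G,\mu)}(Z^+,\theta^+)$, so the hypothesis becomes $h_\eta(\cA_n)\to h_\eta(\cA^+)$ and Theorem~\ref{thm_kernelconv}(ii) yields $\cA_n\to\cA^+$ strongly. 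Finally, the Poisson boundary corresponds to $\cA=\cB_B$, which is an upper Kud\textoverline{o}-limit of every sequence since $\|\bE_{\cA_n}(f)\|_1\le\|f\|_1=\|\bE_{\cB_B}(f)\|_1$ for all $f\in L^1(B,\nu)$; hence the convergence $h_{(G,\mu)}(Z_n,\theta_n)\to h_{(G,\mu)}(B,\nu)$ is the special case of (ii) with $(Z^+,\theta^+)=(B,\nu)$, giving strong convergence to the Poisson boundary.

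The only genuinely substantive point is the entropy identity of the second paragraph — in particular the cocycle change of variables reconciling $\int_G D(g\theta\,\|\,\theta)\,d\mu(g)$ with Furstenberg's $g^{-1}$-formula and the opposite direction of the Kullback--Leibler divergence. The delicate feature is that this works term-by-term in $g$ precisely because $\tfrac{dg^{-1}\theta}{d\theta}(z)=\big(\tfrac{dg\theta}{d\theta}(gz)\big)^{-1}$, and not through any (false in general) symmetry of $\mu$ under $g\mapsto g^{-1}$. The remaining nontrivial ingredient, norm-density of $V$ in $L^1(B,\nu)$ from the generating condition, is part of the kernel verification in Section~\ref{sec:FEnt} and may be assumed here; once it and the identity are granted, the theorem is a direct translation of Theorem~\ref{thm_kernelconv}.
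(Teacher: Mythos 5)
Your overall strategy --- realizing the Furstenberg entropy as an averaged entropy $h_\eta$ of the pulled-back $\sigma$-algebras $\cA_n=\pi_n^{-1}(\cB_{Z_n})$ on the Poisson boundary, and then quoting Theorem~\ref{thm_kernelconv} --- is exactly the paper's, and most of your individual steps are sound: the per-$g$ identity $\Ent_\nu(\bE_{\cA}(\rho_g^\nu))=\Ent_\theta\big(\tfrac{dg\theta}{d\theta}\big)=-\int_Z\log\tfrac{dg^{-1}\theta}{d\theta}\,d\theta$ via Lemma~\ref{lemma_RNfactor} and the cocycle relation is correct, and so is your observation that $\cB_B$ is an upper Kud\textoverline{o}-limit of every sequence (since $\|\bE_{\cA_n}(f)\|_1\leq\|f\|_1$), which settles the ``in particular'' clause. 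The genuine gap is your choice of averaging measure: you take $(T,\eta)=(G,\mu)$, whereas the paper takes $(T,\eta)=(G,\eta_\mu)$ with $\eta_\mu=\sum_{k\geq 1}2^{-k}\mu^{*k}$. The difference is decisive for kernel condition (II), the norm-density of $V=\{f_\psi\}$ in $L^1(B,\nu)$, which is precisely what Theorem~\ref{thm_kernelconv} uses to upgrade convergence of $\bE_{\cA_n}$ on $V$ to strong convergence on all of $L^1(B,\nu)$. The paper's density proof (Lemma~\ref{lemma_poissonmap}) is a Hahn--Banach argument: an annihilating $\phi\in L^\infty(B,\nu)$ satisfies $P_\nu\phi=0$ in $L^\infty(G,\eta_\mu)$, and since $\eta_\mu$ has an everywhere-positive density (this is where the generating condition \eqref{generating} actually enters), the continuous harmonic function $P_\nu\phi$ vanishes identically, whence $\phi=0$ by the isometry $L^\infty(B,\nu)\cong\cH^\infty(G,\mu)\subset L^\infty(G,\eta_\mu)$. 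With $\eta=\mu$ one only learns that $P_\nu\phi$ vanishes on $\supp\mu$, and there is no unique-continuation principle for bounded $\mu$-harmonic functions to conclude $\phi=0$; in the discrete analogue (finitely supported $\mu$) your $V$ would even be finite-dimensional, so density fails badly. Your remark that the verification ``belongs to Section~\ref{sec:FEnt} and may be assumed here'' is therefore a miscitation: that section verifies conditions (I) and (II) for $\eta_\mu$, not for $\mu$.

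The repair is exactly the paper's route: average against $\eta_\mu$. Then the exact identity you prove must be replaced by Lemma~\ref{lemma_entropy}, whose proof uses $\mu$-stationarity to show the Furstenberg integrand is additive under convolution, giving $h_{\eta_\mu}(\cA_n)=\gamma\, h_{(G,\mu)}(Z_n,\theta_n)$ with $\gamma=\sum_{k\geq 1}k\,2^{-k}$; the constant $\gamma$ is harmless in every convergence statement. With this choice, conditions (I) and (II) of Subsection~\ref{subsec:av} are genuinely established by Lemmas~\ref{lemma_RNbounded} and~\ref{lemma_poissonmap} (finiteness of $\int_G\Ent_\nu(\rho_g^\nu)\,d\eta_\mu(g)$ coming from Lemma~\ref{lemma_entropy}(i) together with local boundedness of $g\mapsto\lambda_g$ and compactness of $\supp u$), Theorem~\ref{thm_kernelconv} applies, and the remainder of your argument --- including the treatment of the Poisson boundary as an upper Kud\textoverline{o}-limit --- goes through verbatim.
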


The main point in this theorem is that in some cases we can establish strong convergence of a sequence of $\mu$-boundaries by merely proving convergence of their $\mu$-entropies.

\section{Preliminaries}

\subsection{The space of information and Kud\textoverline{o}-continuity}
\label{subsec:spaceinfo}
Let $(X,\cB_X)$ be a measurable space. We denote by $\frak{S}(X)$ the set of all sub-$\sigma$-algebras of $\cB_X$. Given a probability
measure $\xi$ on $\cB_X$, we endow $\frak{S}(X)$ with the following equivalence relation:  two elements $\cA$ and $\cA'$ are
\emph{$\xi$-equivalent}, written $\cA \sim_\xi \cA'$, if they differ only by $\xi$-null sets. In other words, $\cA \sim_\xi \cA'$ if for every
$A \in \cA$ and for every $A' \in \cA'$, there exist $B' \in \cA'$ and $B \in \cA$ such that 
\[
\xi(A \Delta B') = 0 \qand \xi(A' \Delta B) = 0.
\] 
Since the symmetric differences $A \Delta B'$ and $A' \Delta B$ both belong to $\cB_X$, this equivalence relation does not require $\xi$ to be a 
complete measure. 
The quotient space $\frak{S}(X,\xi) := 
\frak{S}(X)/\sim_\xi$ is sometimes referred to as \emph{the space of information of $(X,\cB_X,\xi)$} in the literature (see e.g. \cite{Cott86}). Furthermore, the choice of a measure $\xi$ on $\cB_X$ provides a map 
\[
\bE : \mathfrak{S}(X) \ra B(L^1(X,\xi)), \enskip \cA \mapsto \bE_{\cA},
\]
where $B(L^1(X,\xi))$ is the set of all bounded linear maps on $L^1(X,\xi)$ and $\bE_{\cA}$ is the conditional expectation with respect to 
the sub-$\sigma$-algebra $\cA$, relative to $\xi$. As is well-known (see e.g. \cite[Theorem 2]{Bo71}), we have $\bE_{\cA} = \bE_{\cA'}$
if and only if $\cA \sim_\xi \cA'$, whence $\bE$ descends to a map $\frak{S}(X,\xi) \ra B(L^1(X,\xi))$, which we still denote by $\bE$.

We shall always assume that $L^1(X,\xi)$ is separable. This assumption allows us to put a sequential topology on $\frak{S}(X,\xi)$ as follows. 
Let $(\cA_n)$ be a sequence in $\frak{S}(X,\xi)$ and let $\cA$ be an element of $\frak{S}(X,\xi)$ (we shall, somewhat abusively, denote elements in $\frak{S}(X)$ and $\frak{S}(X,\xi)$ by the same letters). The sequence $(\cA_n)$ is said to \emph{converge to $\cA$} if for every $f \in L^1(X,\xi)$, 
we have $\bE_{\cA_n}(f) \ra \bE_{\cA}(f)$ in the norm-topology on $L^1(X,\xi)$. The reader might recognize this as the pull-back to $\frak{S}(X,\xi)$, 
under the map $\bE$, of the strong operator topology on $B(L^1(X,\xi))$. Various aspects of this topology have been studied in for instance
\cite{Bo71,Nev72, Rogg74, Fett77, All83-1, All83-2, Cott86, Cott87, Alon, AlonBram, Art01, Kudo}.

Unfortunately, the space $\frak{S}(X,\xi)$, endowed with this topology is not compact (unless $\xi$ is rather special, e.g. purely atomic), and several 
explicit examples of sequences in $\frak{S}(X,\xi)$ with no convergent sub-sequences have been given in the literature; see e.g. \cite[Example 3.1]{Kudo}  and \cite[Example 2.1]{Art01}. In the very insightful paper \cite{Kudo}, Kud\textoverline{o} suggests a remedy for the non-compactness 
of the space of information along the following lines. He first observes \cite[Theorem 3.1]{Kudo} that if $\cA$ and $\cA'$ are sub-$\sigma$-algebras
of $\cB_X$, then $\cA \subset \cA'$ (modulo $\xi$-null sets) if and only if 
\[
\|\bE_{\cA}(f)\|_{L^1(\xi)} \leq \|\bE_{\cA'}(f)\|_{L^1(\xi)}, \quad \textrm{for all $f \in L^1(X,\xi)$}.
\]
Motivated by this equivalence, he goes on to prove that if $(\cA_n)$ is a sequence of sub-$\sigma$-algebras, then the set
\[
\Sigma^+ = \big\{ \cA \in \frak{S}(X,\xi) \, \mid \, \varlimsup_n \|\bE_{\cA_n}(f)\|_{L^1(\xi)} \leq \|\bE_{\cA}(f)\|_{L^1(\xi)}, \enskip \textrm{for all 
$f \in L^1(X,\xi)$} \big\}
\]
is closed under intersections \cite[Lemma 3.2]{Kudo}, whence by a straightforward application of Zorn's Lemma, there is a unique minimal element $\cA^+$ 
in $\Sigma^{+}$ \cite[Theorem 3.3]{Kudo}. Kud\textoverline{o} also proves that the set 
\[
\Sigma^- = \big\{ \cA \in \frak{S}(X,\xi) \, \mid \, \|\bE_{\cA}(f)\|_{L^1(\xi)} \leq \varliminf_n \|\bE_{\cA_n}(f)\|_{L^1(\xi)}, \enskip \textrm{for all 
$f \in L^1(X,\xi)$} \big\}
\]
has a unique \emph{maximal} element $\cA^{-}$, which, modulo $\xi$-null sets, is given by \cite[Theorem 3.2]{Kudo}
\[
\cA^{-} = \big\{ B \in \cB_X \, \mid \, \textrm{there exists $B_n \in \cA_n$ such that $\lim_n \xi(B \Delta B_n) = 0$} \big\}.
\]
Finally, Kud\textoverline{o} shows that the sequence $(\cA_n)$ converges to $\cA^+$ in the sense described above if and only if $\cA^+ \sim_\xi \cA^{-}$ \cite[Theorem 2.1 (ii)]{Kudo}. With these observations at hand, it is natural to introduce the following
definition.

\begin{definition}[Upper and lower Kud\textoverline{o}-limits]
The elements $\Sigma^{+}$ and $\Sigma^{-}$ are called the \emph{upper and lower Kud\textoverline{o}-limits} of the sequence $(\cA_n)$ respectively. 
We refer to $\cA^+$ and $\cA^{-}$ as \emph{the minimal upper and the maximal lower Kud\textoverline{o}-limit} of $(\cA_n)$ respectively.  
\end{definition}

Given the work of Kud\textoverline{o}, the following strengthening of the notion of continuity of functions on $\frak{S}(X,\xi)$ seems natural.

\begin{definition}[Kud\textoverline{o}-continuity]
We say that a function $F : \frak{S}(X,\xi) \ra [0,\infty]$ is \emph{upper Kud\textoverline{o}-continuous} if for every sequence $(\cA_n)$ in $\frak{S}(X,\xi)$,
we have
\[
\varlimsup_n F(\cA_n) \leq F(\cA^+),
\]
where $\cA^+$ denotes the minimal upper Kud\textoverline{o}-limit of $(\cA_n)$, and we say that $F$ is \emph{lower Kud\textoverline{o}-continuous} 
if for every sequence $(\cA_n)$ in $\frak{S}(X,\xi)$, we have
\[
 F(\cA^-) \leq \varliminf_n F(\cA_n),
\]
where $\cA^-$ denotes the maximal upper Kud\textoverline{o}-limit of $(\cA_n)$. We say that $F$ is \emph{Kud\textoverline{o}-continuous} if it is both upper and lower Kud\textoverline{o}-continuous.
\end{definition}

\begin{remark}
Since a sequence $(\cA_n)$ in $\frak{S}(X,\xi)$ converges to $\cA^+$ if and only if $\cA^+ \sim_\xi \cA^-$, we see that every Kud\textoverline{o}-continuous $F : \frak{S}(X,\xi) \ra [0,\infty)$ is continuous in the usual sense (with respect to the sequential topology induced from the strong operator topology on $B(L^1(X,\xi))$). 
\end{remark}

\subsection{Asymptotic second order domination and Kud\textoverline{o}-limits}

It turns out that we can link the notions of upper and lower Kud\textoverline{o}-limits of a sequence in $\frak{S}(X,\xi)$ to the more classical notion of second order stochastic domination between functions on $X$ as follows. We denote by $\cP$ the set of measurable functions $f : X \ra [0,\infty)$ 
such that $\int_X f \, d\xi = 1$. Given a measurable function $f \in \cP$, we 
set
\begin{equation}
\label{def_alphaf0}
\alpha_f(t) = \int_t^\infty \xi(\{ f \geq \tau\}) \, d\tau, \quad \textrm{for $t \geq 0$}.
\end{equation}
We recall the classical notion of second order stochastic domination: If $f_1, f_2 \in \cP$, then we say that $f_1$ is \emph{second order stochastically dominated by $f_2$} if 
\[
\alpha_{f_1}(t) \leq \alpha_{f_2}(t), \quad \textrm{for all $t \geq 0$}.
\]
There is also an asymptotic version of this relation: 

\begin{definition}[Upper and lower limits]
Let $(f_n)$ be a sequence in $\cP$ and let $f$ be an element in $\cP$. We say that $f$ is an \emph{upper limit} of $(f_n)$ if 
\[
\varlimsup_n \alpha_{f_n}(t) \leq \alpha_f(t), \quad \textrm{for all $t \geq 0$},
\]
\vspace{0,2cm}
and we say that $f$ is a lower limit of $(f_n)$ if 
\[
\alpha_f(t) \leq \varliminf_n \alpha_{f_n}(t), \quad \textrm{for all $t \geq 0$},
\]
\end{definition}

In Corollary \ref{cor_relateKudotoAsymp2nd} below, we relate the notions of upper and lower limits to Kud\textoverline{o}-limits of sub-$\sigma$-algebras.
More precisely, we show that if $(\cA_n)$ is a sequence in $\frak{S}(X,\xi)$ and $\cA^{+}$ and $\cA^{-}$ are elements in the space $\frak{S}(X,\xi)$, then 
\begin{enumerate}
\item[(i)] $\cA^{+}$ is an upper Kud\textoverline{o}-limit of $(\cA_n)$ if and only if for every $\rho \in \cP$, 
the conditional expectation $\bE_{\cA^{+}}(\rho)$ is an upper limit of the sequence $(\bE_{\cA_n}(\rho))$.
\vspace{0,2cm}
\item[(ii)] $\cA^{-}$ is an upper Kud\textoverline{o}-limit of $(\cA_n)$ if and only if for every $\rho \in \cP$, 
the conditional expectation $\bE_{\cA^{-}}(\rho)$ is a lower limit of the sequence $(\bE_{\cA_n}(\rho))$.
\end{enumerate}

\subsection{Entropy functions}
\label{subsec:entropyfcns}
The (standard) \emph{entropy} $\Ent_\xi$ 
is defined by
\[
\Ent_\xi(f) = \int_X f(x) \, \log f(x) \, d\xi(x), \quad \textrm{for $f \in \cP$}.
\]
Since the function $t \mapsto t \log t$ is convex on $(0,\infty)$ and equal to zero at $t = 1$, Jensen's inequality guarantees that $\Ent_\xi(f) \geq 0$
for all $f \in \cP$, and that $\Ent_\xi(f) = 0$ if and only if $f = 1$ $\xi$-almost everywhere. More generally, given a convex function 
$\Phi : [0,\infty) \ra \bR$ with $\Phi(1) = 1$, we define the \emph{$\Phi$-entropy} $\Ent_\xi^{\Phi} : \cP \ra [0,\infty]$ by
\[
\Ent_\xi^{\Phi}(f) = \int_X \Phi(f) \, d\xi, \quad \textrm{for $f \in \cP$}.
\]
In this paper, we shall assume that 
\vspace{0.1cm}
\begin{enumerate}
\item[(i)] $\Phi$ is twice continuously differentiable on $(0,\infty)$ and $\lim_{t \ra 0^+} t \, \Phi'(t) = 0$.
\vspace{0.1cm}
\item[(ii)] $\Phi(0) = 0$ and there exists a point $0 < t_o < 1$ such that $\Phi'$ is strictly negative on the interval $(0,t_o)$ and strictly positive on the
interval $(t_o,\infty)$.
\end{enumerate}
These properties are clearly satisfied by the function  
\[
\Phi(t) = 
\left\{
\begin{array}{cc}
0 & \textrm{for $t = 0$} \\[5pt]
t \log t & \textrm{for $t > 0$}
\end{array}
\right.,
\]
with $t_o = e^{-1}$. \\

We shall mostly work in the sub-space $\cP_\Phi \subset \cP$ defined as the set of all $f \in \cP$ such that
\[
\int_X |\Phi(f(x))| \, d\xi(x) < \infty \qand \lim_{t \ra \infty} \alpha_f(t) \, \Phi'(t) = 0,
\]
where $\alpha_f$ is given by \eqref{def_alphaf0}.

\section{Preliminaries on measured groups}
\label{sec:Borelprel}

\subsection{Borel $G$-spaces}
\label{subsec:BorelG}
Let $G$ be a locally compact, second countable and compactly generated group, and fix a \emph{left}-invariant Haar measure $m_G$ on $G$. 
Consider a Borel
action $G \times X \ra X$ on a standard Borel space $(X,\cB_X)$, equipped with a Borel probability measure
$\xi$, which is quasi-invariant with respect to the $G$-action, i.e. for every $g \in G$, the measures $g\xi$ and 
$\xi$ are equivalent.  In particular, this means that for every $g \in G$, the Radon-Nikodym derivative 
$\frac{dg\xi}{d\xi}$ exists $\xi$-almost everywhere. We shall refer to $(X,\xi)$ as a \emph{Borel $G$-space.} \\

Although it is not completely necessary for our arguments, it will be convenient to work with the following global version of $\frac{dg\xi}{d\xi}$:

\begin{lemma}{\cite[Theorem B.9]{Z}, \cite[Proposition 2.22]{FWMW05}}
\label{lemma_BorelG}
There exists a Borel measurable function $K : G \times X \ra [0,\infty)$ such that 
\begin{equation}
\label{RN1}
K(g_1 g_2,x) = K(g_1,x) K(g_2,g_1^{-1}x), \quad \textrm{for all $g_1,g_2 \in G$ and $x \in X$},
\end{equation}
and for every bounded measurable function $f$ on $X$, we have
\begin{equation}
\label{RN2}
\int_X f(gx) \, d\xi(x) = \int_X f(x) \, K(g,x) \, d\xi(x), \quad \textrm{for all $x \in X$},
\end{equation}
\end{lemma}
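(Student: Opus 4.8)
The plan is to build $K$ in three stages: first produce a single jointly measurable representative of the family of Radon--Nikodym derivatives $\{\tfrac{dg\xi}{d\xi}\}_{g \in G}$, then check that the multiplicative cocycle identity \eqref{RN1} holds for almost every triple $(g_1,g_2,x)$, and finally upgrade this almost-everywhere relation to one valid for \emph{all} $g_1,g_2$ and \emph{all} $x$. Only the last stage is genuinely delicate.

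\textbf{Joint measurability.} Since $\xi$ is quasi-invariant, for each $g \in G$ the pushforward $g\xi$, defined by $(g\xi)(A) = \xi(g^{-1}A)$, is equivalent to $\xi$, so $\tfrac{dg\xi}{d\xi}$ exists in $L^1(X,\xi)$; with this convention \eqref{RN2} is just the change-of-variables identity $\int_X f(gx)\,d\xi(x) = \int_X f(x)\,\tfrac{dg\xi}{d\xi}(x)\,d\xi(x)$. The assignment $g \mapsto \tfrac{dg\xi}{d\xi}$ is a Borel map $G \ra L^1(X,\xi)$ (in fact continuous, since the action is continuous and $\xi$ is quasi-invariant). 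Because $G$ is second countable and $L^1(X,\xi)$ is separable, a standard realization result lets me choose a jointly Borel measurable $K_0 : G \times X \ra [0,\infty)$ with $K_0(g,\cdot) = \tfrac{dg\xi}{d\xi}$ for every $g$; this already yields \eqref{RN2} for $K_0$.

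\textbf{The cocycle identity almost everywhere.} Functoriality of pushforward, $(g_1g_2)\xi = g_1(g_2\xi)$, together with the chain rule for Radon--Nikodym derivatives, gives
\[
\frac{d(g_1g_2)\xi}{d\xi}(x) = \frac{dg_1\xi}{d\xi}(x) \cdot \frac{dg_2\xi}{d\xi}(g_1^{-1}x)
\]
for $\xi$-almost every $x$, for each fixed pair $(g_1,g_2)$. Hence $K_0$ satisfies \eqref{RN1} for $(m_G \times m_G \times \xi)$-almost every $(g_1,g_2,x)$, and by Fubini there is a conull set of pairs for which the identity holds $\xi$-a.e.\ in $x$.

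\textbf{Upgrading to a strict cocycle.} The remaining step is to modify $K_0$ on a null set to obtain a Borel $K$ for which \eqref{RN1} holds identically. This is exactly Mackey's theorem that a measurable almost-cocycle over a locally compact second countable group coincides almost everywhere with a strict Borel cocycle: one fixes a generic group element and propagates the a.e.\ identity by repeated use of Fubini, the subtlety being to preserve joint Borel measurability while forcing the relation for \emph{every} $(g_1,g_2)$ and \emph{every} $x$. I would invoke this in the form of \cite[Theorem B.9]{Z} (see also \cite[Proposition 2.22]{FWMW05}). Since the resulting $K$ agrees with $K_0$ for $\xi$-almost every $x$ for each fixed $g$, equation \eqref{RN2} is inherited from the first stage. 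The main obstacle is precisely this last upgrade: the a.e.\ cocycle identity and the joint measurability are routine, whereas producing an everywhere-valid strict cocycle requires the Mackey--Ramsay machinery rather than a direct computation.
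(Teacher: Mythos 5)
The paper does not actually prove this lemma: the citations \cite[Theorem B.9]{Z} and \cite[Proposition 2.22]{FWMW05} attached to the statement are the entirety of its justification, and your proposal ultimately rests on exactly the same results, so in substance the two approaches coincide. Your two preliminary stages (a jointly Borel version of $g \mapsto \frac{dg\xi}{d\xi}$, and the chain-rule computation giving \eqref{RN1} almost everywhere) are correct and are the standard preparation for the Mackey--Ramsay straightening step.

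One step in your scaffolding is misstated, however. Straightening an almost-everywhere cocycle produces a strict Borel cocycle $K$ that agrees with $K_0$ only $(m_G \times \xi)$-almost everywhere; by Fubini this gives $K(g,\cdot) = K_0(g,\cdot)$ $\xi$-a.e.\ for \emph{Haar-almost every} $g$, not ``for each fixed $g$'' as you assert, so \eqref{RN2} does not transfer to $K$ for free: a priori it could fail for $g$ in a Haar-null set. The repair is short but should be made explicit. Let $\Gamma \subset G$ be the conull set of $g$ for which $K(g,\cdot) = K_0(g,\cdot)$ $\xi$-a.e. Given an arbitrary $g \in G$, the sets $\Gamma$ and $g\Gamma^{-1}$ are both conull (inversion and left translations preserve Haar-null sets), so one can write $g = g_1 g_2$ with $g_1, g_2 \in \Gamma$; then for bounded measurable $f \geq 0$, applying \eqref{RN2} for $g_2$, then for $g_1$ (extended to non-negative integrands by monotone convergence), and finally the everywhere-valid identity \eqref{RN1},
\[
\int_X f(gx)\,d\xi(x) = \int_X f(g_1x)\,K(g_2,x)\,d\xi(x)
= \int_X f(x)\,K(g_1,x)\,K(g_2,g_1^{-1}x)\,d\xi(x) = \int_X f(x)\,K(g,x)\,d\xi(x),
\]
which is \eqref{RN2} for $g$. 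Alternatively, simply quote \cite[Theorem B.9]{Z} or \cite[Proposition 2.22]{FWMW05} at full strength: their conclusions already contain \eqref{RN2} for every $g$, which is what the paper does and which makes your inheritance step unnecessary. (A smaller point: the action is only assumed Borel, so your parenthetical claim that $g \mapsto \frac{dg\xi}{d\xi}$ is continuous requires an automatic-continuity argument; plain Borel measurability, which is all your first stage needs, is easier to justify.)
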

In what follows, we interchangeably write:
\begin{equation}
\label{notation}
K(g,x) = \rho^{\xi}_g(x) = \frac{dg\xi}{d\xi}(x), \quad \textrm{for $g \in G$ and $x \in X$}.
\end{equation}
We note that
\begin{equation}
\label{rho}
\rho^{\xi}_g \geq 0 \qand \int_X \rho^{\xi}_g\, d\xi = 1, \quad \textrm{for all $g \in G$}.
\end{equation}
We recall that a \emph{$G$-factor} of $(X,\xi)$ is another Borel $G$-space $(Z,\theta)$, together with a $\xi$-conull $G$-invariant 
subset $X'' \subset X$ and a $G$-equivariant measurable map $\pi : X'' \ra Z$ such that $\pi_*(\xi \mid_{X''}) = \nu$. We shall sometimes 
abuse notation and write $\pi : (X,\xi) \ra (Z,\theta)$ for the $G$-factor, suppressing the dependence on $X''$, and we note that the pull-back 
$\pi^{-1}(\cB_Z)$ of the Borel $\sigma$-algebra $\cB_Z$ is a $G$-invariant sub-$\sigma$-algebra of $\cB_X$ (restricted to the conull subset 
$X''$). The following lemma is standard; we only give a proof below for completeness.

\begin{lemma}
\label{lemma_RNfactor}
Let $\pi : (X,\xi) \ra (Z,\theta)$ be a $G$-factor between two Borel $G$-space. Then, for all $g \in G$,
we have
\[
\rho_g^{\theta} \circ \pi = \bE_{\pi^{-1}(\cB_Z)}(\rho_g^{\xi}) \quad \textrm{in $L^1(X,\xi)$}.
\]
\end{lemma}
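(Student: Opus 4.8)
The plan is to verify directly that $\rho_g^{\theta}\circ\pi$ satisfies the two defining properties of the conditional expectation $\bE_{\cA}$ relative to $\xi$, where $\cA := \pi^{-1}(\cB_Z)$. First I would check measurability: since $\rho_g^{\theta}$ is $\cB_Z$-measurable, the composition $\rho_g^{\theta}\circ\pi$ is measurable with respect to $\pi^{-1}(\cB_Z) = \cA$, as required. Conversely, every bounded $\cA$-measurable function is of the form $\psi\circ\pi$ for a bounded $\cB_Z$-measurable $\psi$, so to pin down the conditional expectation it suffices to match integrals against such test functions.

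The heart of the argument is then the averaging identity: for every bounded $\cB_Z$-measurable $\psi$ I would show
\[
\int_X (\psi\circ\pi)\,\rho_g^{\xi}\,d\xi = \int_X (\psi\circ\pi)\,(\rho_g^{\theta}\circ\pi)\,d\xi.
\]
I would establish this by the chain of equalities
\[
\int_X (\psi\circ\pi)\,\rho_g^{\xi}\,d\xi = \int_X \psi(\pi(gx))\,d\xi(x) = \int_X \psi(g\,\pi(x))\,d\xi(x) = \int_Z \psi(gz)\,d\theta(z) = \int_Z \psi\,\rho_g^{\theta}\,d\theta = \int_X (\psi\circ\pi)\,(\rho_g^{\theta}\circ\pi)\,d\xi,
\]
where the first step applies the global Radon--Nikodym relation \eqref{RN2} on $(X,\xi)$ to the bounded function $\psi\circ\pi$, the second step uses $G$-equivariance of $\pi$ (that is, $\pi(gx) = g\,\pi(x)$ on the conull invariant set), the third and last steps use $\pi_*\xi = \theta$, and the fifth step applies \eqref{RN2} again, this time on the Borel $G$-space $(Z,\theta)$. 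Taking $\psi$ to range over indicators $\mathbf{1}_C$, $C \in \cB_Z$, recovers exactly the averaging property $\int_A \rho_g^{\xi}\,d\xi = \int_A (\rho_g^{\theta}\circ\pi)\,d\xi$ for all $A = \pi^{-1}(C) \in \cA$, and since such $A$ generate $\cA$ this determines $\bE_{\cA}(\rho_g^{\xi})$.

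I do not expect a serious obstacle here; the result is essentially a change-of-variables computation, and the only points requiring care are bookkeeping ones. The main thing to be careful about is that $\pi$ and the equivariance relation $\pi(gx) = g\,\pi(x)$ are only defined on the conull $G$-invariant subset $X'' \subset X$ from the definition of a $G$-factor, so all the identities above hold $\xi$-almost everywhere, which is precisely what is needed for an equality in $L^1(X,\xi)$. The second point is to keep the orientation conventions straight, namely that $\rho_g^{\xi} = \tfrac{dg\xi}{d\xi}$ is exactly the kernel $K(g,\cdot)$ appearing in \eqref{RN2}, so that the first equality in the displayed chain is a legitimate application of that formula, and likewise on $(Z,\theta)$.
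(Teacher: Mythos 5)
Your proposal is correct and follows essentially the same route as the paper's proof: both reduce the lemma to the integral identity against test functions of the form $\psi\circ\pi$ with $\psi\in L^\infty(Z,\theta)$, and your displayed chain of equalities is exactly the paper's chain (pushforward $\pi_*\xi=\theta$, the Radon--Nikodym relation \eqref{RN2} on $(Z,\theta)$, equivariance of $\pi$, and \eqref{RN2} on $(X,\xi)$) read in the opposite direction. Your explicit remarks on the $\cA$-measurability of $\rho_g^{\theta}\circ\pi$ and on the conull set $X''$ are points the paper leaves implicit, but they do not change the argument.
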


Suppose now that $((Z_n,\theta_n))$ is a sequence of $G$-factors of $(X,\xi)$, together with $G$-equivariant Borel maps 
$\pi_n : (X,\xi) \ra (Z_n,\theta_n)$, as above, for every $n$. Then $\cA_n := \pi_{Z_n}^{-1}(\cB_{Z_n})$ is a
$G$-invariant sub-sigma algebra of $\cB_X$ (modulo $\xi$-null sets) for every $n$, and we denote by $\cA^{-}$ and $\cA^{+}$ the 
\emph{maximal} lower and \emph{minimal} upper Kud\textoverline{o}-limits of the sequence $(\cA_n)$ respectively. These are uniquely 
determined up to $\xi$-null sets. The following lemma is proved below.

\begin{lemma}
\label{Kudoinvariant}
The sub-$\sigma$-algebras $\cA^{-}$ and $\cA^{+}$ are $G$-invariant (modulo $\xi$-null sets).
\end{lemma}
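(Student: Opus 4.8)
The plan is to fix $g \in G$ and show that the order-preserving bijection $\cA \mapsto g\cA$ of $\mathfrak{S}(X,\xi)$, where $g\cA = \{ gA : A \in \cA\}$, carries Kud\={o}'s sets $\Sigma^{+}$ and $\Sigma^{-}$ onto themselves. Since this map respects $\sim_\xi$ and preserves inclusions, once I know it maps $\Sigma^{+}$ (resp.\ $\Sigma^{-}$) bijectively onto itself it must fix the unique minimal element $\cA^{+}$ (resp.\ the unique maximal element $\cA^{-}$), i.e.\ $g\cA^{\pm} = \cA^{\pm}$ modulo $\xi$-null sets for every $g$, which is precisely $G$-invariance. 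The engine will be the natural isometric action of $G$ on $L^1(X,\xi)$ given by $(U_g f)(x) = \rho^\xi_g(x)\, f(g^{-1}x)$. Using \eqref{RN2} one checks that each $U_g$ is a linear isometry preserving $\int_X \cdot \, d\xi$, and the cocycle relation \eqref{RN1} gives $U_g^{-1} = U_{g^{-1}}$, so $U_g$ is a bijection of $L^1(X,\xi)$.

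The key step, which I expect to be the main obstacle, is the covariance identity
\[
\| \bE_{g\cA}(f) \|_1 = \| \bE_{\cA}(U_{g^{-1}} f) \|_1, \qquad f \in L^1(X,\xi),
\]
valid for every sub-$\sigma$-algebra $\cA$ and every $g$. The subtlety is that $U_g$ does \emph{not} send $\cA$-measurable functions to $g\cA$-measurable ones (the factor $\rho^\xi_g$ is in general measurable with respect to neither), so the naive operator covariance $U_g \bE_\cA U_{g^{-1}} = \bE_{g\cA}$ fails and only the $L^1$-norm survives. To prove the identity I would compute $\bE_{g\cA}(f)$ by hand. Writing $F = \bE_{g\cA}(f)$, the function $G := F(g\,\cdot\,)$ is $\cA$-measurable (since $\psi$ is $g\cA$-measurable exactly when $\psi(g\,\cdot\,)$ is $\cA$-measurable). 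Testing the defining relation of conditional expectation against the $g\cA$-measurable functions $\phi(g^{-1}\,\cdot\,)$ with $\phi$ ranging over bounded $\cA$-measurable functions, and changing variables via \eqref{RN2}, reduces the defining relation to $G\,\bE_\cA(w) = \bE_\cA\!\big(w\, f(g\,\cdot\,)\big)$, where $w = \rho^\xi_{g^{-1}}$. Taking $L^1$-norms, converting $\int |G|\, w\, d\xi$ into $\int |G|\, \bE_\cA(w)\, d\xi$ by the pull-out property, and noting that $U_{g^{-1}} f = w \cdot f(g\,\cdot\,)$, then yields the claimed identity.

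With the identity available the remainder is formal. Because each $\cA_n$ is $G$-invariant we have $g\cA_n = \cA_n$, so applying the identity to $\cA_n$ gives $\|\bE_{\cA_n}(U_{g^{-1}}f)\|_1 = \|\bE_{\cA_n}(f)\|_1$ for all $f$. Now if $\cA \in \Sigma^{+}$, i.e.\ $\varlimsup_n \|\bE_{\cA_n}(h)\|_1 \le \|\bE_\cA(h)\|_1$ for all $h$, then substituting $h = U_{g^{-1}} f$ and using both identities gives $\varlimsup_n \|\bE_{\cA_n}(f)\|_1 \le \|\bE_{g\cA}(f)\|_1$, that is $g\cA \in \Sigma^{+}$; the same substitution shows $\cA \in \Sigma^{-} \Rightarrow g\cA \in \Sigma^{-}$, since $U_{g^{-1}}$ is a bijection of $L^1$. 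Applying this also to $g^{-1}$ shows $\cA \mapsto g\cA$ maps $\Sigma^{\pm}$ \emph{onto} $\Sigma^{\pm}$, and as it preserves inclusions it fixes the minimal element $\cA^{+}$ of $\Sigma^{+}$ and the maximal element $\cA^{-}$ of $\Sigma^{-}$, proving both are $G$-invariant. For $\cA^{-}$ one can alternatively argue directly from Kud\={o}'s explicit description $\cA^{-} = \{B : \exists\, B_n \in \cA_n,\ \xi(B \Delta B_n) \to 0\}$: if $B \in \cA^{-}$ then $gB_n \in \cA_n$ and $\xi(gB \Delta gB_n) = \int_{B \Delta B_n} \rho^\xi_{g^{-1}}\, d\xi \to 0$ by absolute continuity of the integral of the fixed $L^1$-function $\rho^\xi_{g^{-1}}$, so $gB \in \cA^{-}$.
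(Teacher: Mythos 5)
Your proposal is correct, and its engine coincides with the paper's: your covariance identity $\|\bE_{g\cA}(f)\|_{1} = \|\bE_{\cA}(\rho^{\xi}_{g^{-1}}\,(f\circ g))\|_{1}$ is exactly Lemma \ref{transform}, and you prove it by the same computation (testing the defining relation of $\bE_{g\cA}$ against transported $\cA$-measurable functions, changing variables via \eqref{RN2}, and using the pull-out property; avoiding the paper's division by $\bE_{\cA}(\rho^{\xi}_{g^{-1}})$ is a small cosmetic gain). Likewise, your verification that $G$-invariance of the $\cA_n$ converts membership of $\cA$ in $\Sigma^{\pm}$ into membership of $g\cA$ is the paper's computation essentially verbatim. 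The genuine difference is the endgame. The paper treats the two limits asymmetrically: for $\cA^{+}$ it invokes Kud\textoverline{o}'s lemma that $\Sigma^{+}$ is closed under intersections, so that $\cA^{+}\cap g\cA^{+}\in\Sigma^{+}$, and minimality forces $\cA^{+}\subseteq g\cA^{+}$ (equality following upon replacing $g$ by $g^{-1}$); for $\cA^{-}$ it argues directly from Kud\textoverline{o}'s explicit description $\cA^{-}=\{B : \exists\, B_n\in\cA_n,\ \xi(B\Delta B_n)\to 0\}$ --- precisely the argument you relegate to an alternative (note, incidentally, that your density $\rho^{\xi}_{g^{-1}}$ there is the correct one; the paper writes $\rho^{\xi}_{g}$, an inconsequential typo). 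Your uniform route --- $\cA\mapsto g\cA$ is an inclusion-preserving bijection of $\frak{S}(X,\xi)$ carrying $\Sigma^{+}$ and $\Sigma^{-}$ onto themselves, hence fixing their unique minimal, respectively maximal, elements --- needs neither the intersection-closure lemma nor the explicit description, only uniqueness of the extremal elements, and it handles both signs at once. Both arguments are complete; yours is marginally more economical and symmetric, while the paper's treatment of $\cA^{-}$ is more concrete.
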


By Mackey's Point Realization Theorem (see e.g. \cite{Mack}), we can now conclude that there exist Borel $G$-spaces $(Z^{-},\theta^{-})$ and $(Z^{+},\theta^{+})$, together with
$G$-equivariant measurable maps 
\[
\pi_{-} : (X,\xi) \ra (Z^{-},\theta^{-}) \qand \pi_{+} : (X,\xi) \ra (Z^{+},\theta^{+})
\]
such that $\pi^{-1}_{-}(\cB_{Z^{-}}) = \cA^{-}$ and $\pi_{+}^{-1}(\cB_{Z^{+}}) = \cA^{+}$  modulo $\xi$-null sets. Moreover, these spaces and maps 
are uniquely determined up to $G$-equivariant isomorphisms, that is to say, if $(W^{-},\lambda^{-})$ and $(W^{+},\lambda^{+})$ are two other
Borel $G$-spaces, endowed with $G$-equivariant measurable maps 
\[
\tau_{-} : (X,\xi) \ra (W^{-},\lambda^{-}) \qand \tau_{+} : (X,\xi) \ra (W^{+},\lambda^{+})
\]
such that $\tau^{-1}_{-}(\cB_{W^{-}}) = \cA^{-}$ and $\tau_{+}^{-1}(\cB_{W^{+}}) = \cA^{+}$  modulo $\xi$-null sets, then there are $G$-equivariant
measurable isomorphism $\kappa_{\pm} : (Z^{\pm},\theta^{\pm}) \ra (W^{\pm},\lambda^{\pm})$ such that $\tau_{\pm} = \kappa_{\pm} \circ \pi_{\pm}$.
We shall refer to the $G$-factors $(Z^{-},\theta^{-})$ and $(Z^{+},\theta^{+})$ as the \emph{lower} and \emph{upper} Kud\textoverline{o}-limits of 
the sequence $((Z_n,\theta_n))$ respectively. 

\subsection{Measured groups and stationary measures}
\label{subsec:measuredgrps}
Suppose that $\mu$ is a Borel probability measure on $G$ which is absolutely continuous with respect to $m_G$. We write $d\mu = u \, dm_G$ and shall assume throughout the rest of this section that $u$ is a continuous function on $G$ with relatively compact support such that
\begin{equation}\label{generating2}
G=\bigcup_{k \geq 1} \big\{ u^{*k} > 0 \big\}.
\end{equation}
We shall refer to the pair $(G,\mu)$ as a \emph{measured group}.
In particular, \eqref{generating2} implies that the Borel probability measure $\eta_\mu$ on $G$ defined by
\begin{equation}
\label{def_etamu}
\eta_\mu = \sum_{k \geq 1} 2^{-k} \, \mu^{*k},
\end{equation}
is absolutely continuous with respect to $m_G$ with an everywhere positive density. \\

Let $(X,\xi)$ be a Borel $G$-space. We shall throughout the rest of this section assume that $\xi$ is \emph{$\mu$-stationary}, 
i.e. 
\[
\mu * \xi = \int_G g \xi \, d\mu(g) = \xi,
\] 
in which case we refer to $(X,\xi)$ as a \emph{Borel $(G,\mu)$-space}.

The following lemma will be proved below.
\begin{lemma}[Harnack's inequality]
\label{lemma_RNbounded}
For $(G,\mu)$ as above   every $\mu$-stationary space  $(X,\xi)$
 has bounded Radon-Nikodym derivatives, i.e.  for every $g \in G$ there exists
$\lambda_g \geq 1$ such that
\[
\lambda_g^{-1} \leq \rho_g^\xi(x) \leq \lambda_g,
\]
for $\xi$-almost every $x \in X$. Furthermore, the map $g \mapsto \lambda_g$ is locally bounded on $G$, i.e. bounded on compact subsets of $G$.
\end{lemma}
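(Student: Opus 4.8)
The plan is to reduce the two-sided bound, via two soft reductions, to a single Harnack-type inequality for the density of the auxiliary measure $\eta_\mu$ from \eqref{def_etamu}, and then to prove that inequality using the generating condition \eqref{generating2}. First I would note that it suffices to produce, for every $g$, a constant $\lambda_g\ge 1$ with $g\xi\le\lambda_g\xi$ as measures, together with local boundedness of $g\mapsto\lambda_g$. Indeed, applying such a one-sided bound to $g^{-1}$ and pushing forward by $g$ gives $\xi=g(g^{-1}\xi)\le\lambda_{g^{-1}}\,g\xi$, i.e.\ $g\xi\ge\lambda_{g^{-1}}^{-1}\xi$; hence $\lambda_g':=\max(\lambda_g,\lambda_{g^{-1}})$ yields $(\lambda_g')^{-1}\le\rho_g^\xi\le\lambda_g'$ for $\xi$-a.e.\ $x$, and $g\mapsto\lambda_g'$ is locally bounded since inversion is a homeomorphism. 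For the one-sided bound I would use $\mu$-stationarity: it gives $\mu^{*k}*\xi=\xi$ for all $k$, hence $\eta_\mu*\xi=\xi$. Writing $g\xi=\delta_g*\eta_\mu*\xi=(g\eta_\mu)*\xi$ and using that $\nu\mapsto\nu*\xi$ is monotone, the bound $g\xi\le\lambda_g\xi$ follows once I establish the purely group-theoretic inequality $g\eta_\mu\le\lambda_g\eta_\mu$ on $G$. Denoting by $\psi$ the density of $\eta_\mu$ with respect to $m_G$, this is the pointwise statement $\psi(g^{-1}k)\le\lambda_g\,\psi(k)$ for $m_G$-a.e.\ $k$.

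To prepare the Harnack estimate I would first record the elementary facts that each $u^{*m}$ lies in $C_c(G)$ with $\|u^{*m}\|_\infty\le\|u\|_\infty$, so that $\psi=\sum_{m\ge 1}2^{-m}u^{*m}$ converges uniformly and is continuous, bounded, and everywhere strictly positive (positivity being exactly \eqref{generating2}). From the self-similarity $\eta_\mu=\tfrac12\mu+\tfrac12\,\mu*\eta_\mu$ I obtain the two identities I will use: $\psi=\tfrac12 u+\tfrac12\,u*\psi$, and, by discarding non-negative terms in the defining series, $u^{*m}*\psi\le 2^m\psi$ for every $m$.

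Now I would evaluate the first identity at $g^{-1}k$ and change variables (using left-invariance of $m_G$) to split $\psi(g^{-1}k)$ into $\tfrac12\,u(g^{-1}k)$ and $\tfrac12\,(u_g*\psi)(k)$, where $u_g=u(g^{-1}\,\cdot\,)$ is supported on the compact set $K_g=g\,\supp(u)$. The first term is at most $\tfrac{\|u\|_\infty}{c_g}\,\psi(k)$, where $c_g=\min_{K_g}\psi>0$ by continuity and positivity of $\psi$ on the compact set $K_g$. For the second term I would dominate $\mathbf{1}_{K_g}\le (c_g'')^{-1}\sum_{m=1}^{M}u^{*m}$, where by compactness of $K_g$ and \eqref{generating2} a finite $M$ can be chosen with $\bigcup_{m\le M}\{u^{*m}>0\}\supseteq K_g$ and $c_g''=\min_{K_g}\sum_{m\le M}u^{*m}>0$; combined with $u^{*m}*\psi\le 2^m\psi$ this gives $(u_g*\psi)(k)\le\|u\|_\infty (c_g'')^{-1}\,2^{M+1}\,\psi(k)$. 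Summing the two estimates yields $\psi(g^{-1}k)\le\lambda_g\,\psi(k)$. For local boundedness I would run the same argument with $g$ ranging over a fixed compact $Q$: then $K_g\subseteq K_Q:=Q\,\supp(u)$, so a single $M$, a single lower bound $c_Q''=\min_{K_Q}\sum_{m\le M}u^{*m}>0$, and $c_g\ge c_Q:=\min_{K_Q}\psi>0$ all work simultaneously, giving a bound on $\lambda_g$ uniform over $Q$.

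I expect the main obstacle to be the second term: one needs genuinely \emph{multiplicative} (rather than merely additive or $L^1$) control of $u_g*\psi$ by $\psi$. What makes this possible is that the per-step weight $2^{-m}$ built into $\eta_\mu$ exactly matches the growth in $u^{*m}*\psi\le 2^m\psi$, while the generating condition \eqref{generating2} upgrades ``positivity somewhere'' to ``uniform positivity on a compact set'' through the single finite truncation $\sum_{m\le M}u^{*m}$. The remaining steps --- continuity of the truncated sum, and the left-invariance bookkeeping in the changes of variables --- are routine.
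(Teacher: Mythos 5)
Your proposal is correct, and it takes a genuinely different route from the paper's. The paper reduces the lemma to a functional Harnack inequality (its Lemma \ref{Harnack}): every non-negative $\psi$ on $G$ that is $\mu$-harmonic at $g$ and at $e$ with $\psi(e)=1$ satisfies $\psi(g)\le C_g$; this is then applied to the functions $\psi_x(s)=\rho^\xi_s(x)$, whose harmonicity for $\xi$-a.e.\ $x$ comes from stationarity, and the lower bound follows from the cocycle identity $\rho_g^\xi(x)=1/\rho^\xi_{g^{-1}}(gx)$. You instead work entirely at the level of measures on $G$: stationarity is used once, in the form $\eta_\mu*\xi=\xi$, to reduce everything to the single inequality $g\eta_\mu\le\lambda_g\eta_\mu$, i.e.\ a Harnack inequality for the explicit continuous density $\psi$ of $\eta_\mu$, which then transfers to \emph{every} stationary space at once via monotonicity of $\nu\mapsto\nu*\xi$; your inversion trick $\xi=g(g^{-1}\xi)\le\lambda_{g^{-1}}\,g\xi$ plays the role of the paper's cocycle step. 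The combinatorial cores are parallel --- both dominate the indicator of the compact set $K_g=g\,\supp(u)$ by finitely many convolution powers $u^{*m}$ using \eqref{generating2} (the paper via the covering by translates $sV_k(s)$ on which $u^{*k}\ge\frac12 u^{*k}(s)$, you via the simpler bound $\mathbf{1}_{K_g}\le(c_g'')^{-1}\sum_{m\le M}u^{*m}$, with $c_g''>0$ by continuity and compactness) --- but the normalizations differ: the paper closes its estimate with $\int_G\psi\,u^{*k}\,dm_G=\psi(e)=1$, a step that implicitly uses harmonicity at more points than just $g$ and $e$ (harmless in the application, where $\psi_x$ is harmonic at a.e.\ point for a.e.\ $x$, but a genuine bookkeeping subtlety), whereas you use the weight-matching inequality $u^{*m}*\psi\le 2^m\psi$, immediate from the geometric series defining $\eta_\mu$. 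What your route buys is precisely this cleanliness: a pointwise inequality between continuous functions on $G$, proved everywhere with no null-set bookkeeping in $x$ or in $s$, with local boundedness of $g\mapsto\lambda_g$ falling out of the same compactness argument run uniformly over $g$ in a compact set. What the paper's route buys is a more flexible, stand-alone Harnack inequality for arbitrary normalized harmonic functions, closer to the classical statement and potentially reusable beyond Radon--Nikodym derivatives of stationary measures.
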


\subsection{Boundary theory of measured groups}
\label{subsec:bndtheory}
We say that $\psi \in L^\infty(G, \eta_\mu)$ is \emph{$\mu$-harmonic} if 
\[
(\psi * \mu)(g) = \int_G \psi(gh) \, d\mu(h) = \psi(g), \quad \textrm{for all $g \in G$}.
\]
Since we assume that $\mu$ is absolutely continuous with respect to the Haar measure on $G$, every $\mu$-harmonic
function must be continuous (in fact, right uniformly continuous, see \cite[Lemma 1.2]{Bab}). We denote by $\cH^\infty(G,\mu)$ the space of $\mu$-harmonic functions on $G$. Given
a Borel $(G,\mu)$-space $(X,\xi)$, we consider the \emph{Poisson transform} 
\begin{equation}
\label{def_Poissonmap}
P_\xi : L^\infty(X,\xi) \ra L^\infty(G, \eta_\mu), \quad f \mapsto \widehat{\psi}_f(g) = \int_X f(gx) \, d\xi(x), \enskip \textrm{for $g \in G$}.
\end{equation}
It readily follows from $\mu$-stationarity of $\xi$ that the image of $P_\xi$ is always contained in $\cH^\infty(G,\mu)$. A fundamental
observation of Furstenberg (see e.g. \cite[Theorem 2.13]{Fur}) is that for every measured group, there is always a Borel $(G,\mu)$-space $(B,\nu)$ such that the
Poisson transform is an isometric isomorphism between the Banach spaces $L^\infty(B,\nu)$ and $\cH^\infty(G,\mu)$, where the latter 
inherits the norm from the ambient $L^\infty(G,\eta_\mu)$-space. This Borel $(G,\mu)$-space is further unique up to measurable and 
$G$-equivariant isomorphisms, and it (or any of its isomorphic versions) is referred to as the \emph{Poisson boundary} of $(G,\mu)$. 
A $G$-factor of the Poisson boundary of $(G,\mu)$ is called a \emph{$\mu$-boundary}.  \\

The following lemma is proved below.
\begin{lemma}
\label{lemma_poissonmap}
Let $(B,\nu)$ be the Poisson boundary of $(G,\mu)$. Then the map $\psi \mapsto f_\psi$ from $L^\infty(G, \eta_\mu)$ to $L^1(B, \nu)$
given by
\[
f_\psi = \int_G \psi(g) \, \rho_g^{\nu}  \, d\eta_\mu(g), \quad \textrm{for $\psi \in L^\infty(G,\eta_\mu)$}
\]
has a norm-dense image $V \subset L^1(B, \nu)$.
\end{lemma}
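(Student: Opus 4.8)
The plan is to prove density of $V$ in $L^1(B,\nu)$ by a Hahn--Banach duality argument. Since $L^1(B,\nu)^\ast = L^\infty(B,\nu)$, the subspace $V$ is norm-dense exactly when its annihilator is trivial; that is, when the only $\phi \in L^\infty(B,\nu)$ satisfying $\int_B \phi \, f_\psi \, d\nu = 0$ for all $\psi \in L^\infty(G,\eta_\mu)$ is $\phi = 0$. I would therefore fix such a $\phi$ and show that it must vanish.

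The crux is to unwind the pairing $\int_B \phi \, f_\psi \, d\nu$ and recognise a Poisson transform inside it. First I would invoke Fubini's theorem to write
\[
\int_B \phi(b) \Big( \int_G \psi(g)\, \rho_g^\nu(b) \, d\eta_\mu(g) \Big) d\nu(b) = \int_G \psi(g) \Big( \int_B \phi(b)\, \rho_g^\nu(b)\, d\nu(b) \Big) d\eta_\mu(g);
\]
this interchange is legitimate because $|\psi(g)\phi(b)\rho_g^\nu(b)| \leq \|\psi\|_\infty \|\phi\|_\infty \, \rho_g^\nu(b)$ and $\int_B \rho_g^\nu \, d\nu = 1$ for every $g$ by \eqref{rho}, so the double integral converges absolutely. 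Then, applying the defining property \eqref{RN2} of the cocycle $\rho_g^\nu = K(g,\cdot)$ to the bounded function $\phi$, the inner integral collapses to $\int_B \phi(b)\rho_g^\nu(b)\, d\nu(b) = \int_B \phi(gb)\, d\nu(b) = P_\nu(\phi)(g)$, the Poisson transform of $\phi$ at $g$ from \eqref{def_Poissonmap}. Hence $\phi$ annihilates $V$ if and only if
\[
\int_G \psi(g)\, P_\nu(\phi)(g)\, d\eta_\mu(g) = 0 \quad \textrm{for every $\psi \in L^\infty(G,\eta_\mu)$}.
\]

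To finish, I would note that $P_\nu(\phi)$ lies in $L^\infty(G,\eta_\mu) \subset L^1(G,\eta_\mu)$ and pairs to zero against every bounded $\psi$, which forces $P_\nu(\phi) = 0$ in $L^\infty(G,\eta_\mu)$ (here the everywhere-positive density of $\eta_\mu$ recorded after \eqref{def_etamu}, together with the continuity of $\mu$-harmonic functions, makes this $\eta_\mu$-almost-everywhere vanishing genuine vanishing). By Furstenberg's theorem the Poisson transform $P_\nu : L^\infty(B,\nu) \ra \cH^\infty(G,\mu)$ is an isometric isomorphism, hence injective, so $P_\nu(\phi) = 0$ gives $\phi = 0$, as required. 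I do not expect a serious obstacle here: the entire content is in rewriting the pairing as $\int_G \psi\, P_\nu(\phi)\, d\eta_\mu$, after which injectivity of $P_\nu$ closes the argument; the only points demanding care are the Fubini justification and the correct use of the cocycle identity \eqref{RN2}.
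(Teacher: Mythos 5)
Your proposal is correct and follows essentially the same route as the paper's own proof: a Hahn--Banach annihilator argument, interchange of integrals to recognise the inner pairing as the Poisson transform $P_\nu(\phi)(g)$ via the cocycle identity, and then injectivity of $P_\nu$ (Furstenberg's isometric isomorphism) to conclude $\phi = 0$. Your additional care with the Fubini justification and with upgrading $\eta_\mu$-a.e.\ vanishing of $P_\nu(\phi)$ to genuine vanishing is a welcome refinement of details the paper leaves implicit, but it does not change the argument.
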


\subsection{The Furstenberg entropy of a stationary action}

The \emph{Furstenberg $\mu$-entropy} $h_{(G,\mu)}(X,\xi)$ of the Borel $G$-space $(X,\xi)$ is defined by
\[
h_{(G,\mu)}(X,\xi) = \int_G \Big( \int_X -\log \frac{dg^{-1}\xi}{d\xi}(x) \, d\xi(x) \Big) \, d\mu(g).
\]
It is well-known (see e.g. \cite[Subsection 2.7]{Fur}) that $h_{(G,\mu)}(X,\xi) = 0$ if and only if $\xi$ is $G$-invariant. Furthermore, if $(B,\nu)$ denotes the
Poisson boundary of $(G,\mu)$, then
\[
h_{(G,\mu)}(B,\nu) \geq h_{(G,\mu)}(Z,\theta), \quad \textrm{for every $\mu$-boundary $(Z,\theta)$},
\]
with equality if and only if $(Z,\theta)$ is isomorphic to $(B,\nu)$ via a measurable $G$-equivariant isomorphism. We further prove below:
\begin{lemma}
\label{lemma_entropy}
Let $(X,\xi)$ be a Borel $(G,\mu)$-space and set $\gamma = \sum_{k=1}^\infty \frac{k}{2^{k}}$. 
Then, 
\vspace{0.2cm}
\begin{enumerate}
\item[(i)] with $\eta_\mu$ as defined  in \eqref{def_etamu}, 
\[
h_{(G,\mu)}(X,\xi) = \gamma^{-1} \, \int_G \Ent_{\xi}(\rho^{\xi}_g) \, d\eta_\mu(g).
\]
In particular, if $h_{(G,\mu)}(X,\xi)$ is finite, then 
so is $ \int_G \Ent_{\xi}(\rho^{\xi}_g) \, d\eta_\mu(g)$.
\vspace{0.2cm}
\item[(ii)] if $\pi : (X,\xi) \ra (Z,\theta)$ is a $G$-factor, we have
\[
h_{(G,\mu)}(Z,\theta) = \gamma^{-1} \, \int_G \Ent_{\xi}\big(\bE_{\pi_Z^{-1}(\cB_Z)}(\rho^{\xi}_g)\big) \, d\eta_\mu(g)
\]
\end{enumerate}
\end{lemma}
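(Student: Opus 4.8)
The plan is to first reduce the Kullback--Leibler form of the Furstenberg entropy to an integral of $\Ent_\xi$ against $\mu$, then upgrade from $\mu$ to $\eta_\mu$ by exploiting an additivity property of the integrand under convolution, and finally transport everything through the factor map $\pi$ to obtain (ii).

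\emph{Step 1 (a pointwise identity).} First I would record the consequence of the cocycle relation \eqref{RN1} obtained by setting $g_1 = g^{-1}$ and $g_2 = g$, namely $\rho_{g^{-1}}^\xi(x)\,\rho_g^\xi(gx) = K(e,x) = 1$, so that $-\log\rho_{g^{-1}}^\xi(x) = \log\rho_g^\xi(gx)$. By Harnack's inequality (Lemma~\ref{lemma_RNbounded}) the function $\log\rho_g^\xi$ is bounded, so the substitution rule \eqref{RN2} applies directly to $f = \log\rho_g^\xi$ and yields, for every $g \in G$,
\[
\int_X -\log\frac{dg^{-1}\xi}{d\xi}(x)\,d\xi(x) = \int_X \log\rho_g^\xi(gx)\,d\xi(x) = \int_X \rho_g^\xi \log\rho_g^\xi\,d\xi = \Ent_\xi(\rho_g^\xi).
\]
Since $\Ent_\xi(\rho_g^\xi) \geq 0$ by Jensen's inequality, every integral below involves only nonnegative quantities and Tonelli's theorem applies without integrability hypotheses. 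Integrating against $\mu$ already gives $h_{(G,\mu)}(X,\xi) = \int_G \Ent_\xi(\rho_g^\xi)\,d\mu(g)$.

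\emph{Step 2 (additivity under convolution).} The heart of the matter is the claim that for $\mu$-stationary $\xi$ and any probability measure $\nu$ on $G$,
\[
\int_G \Ent_\xi(\rho_g^\xi)\,d(\nu*\mu)(g) = \int_G \Ent_\xi(\rho_g^\xi)\,d\nu(g) + \int_G \Ent_\xi(\rho_g^\xi)\,d\mu(g).
\]
To prove this I would write $g = g_1 g_2$, so that $(g_1 g_2)^{-1} = g_2^{-1}g_1^{-1}$, and expand via \eqref{RN1} as $-\log\rho_{(g_1g_2)^{-1}}^\xi(x) = -\log\rho_{g_2^{-1}}^\xi(x) - \log\rho_{g_1^{-1}}^\xi(g_2 x)$. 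Integrating the first summand over $X$ gives $\Ent_\xi(\rho_{g_2}^\xi)$ by Step 1; for the second summand I would apply \eqref{RN2} in the variable $g_2$ (which introduces a factor $\rho_{g_2}^\xi$) and then integrate in $g_2$ against $\mu$, invoking $\mu$-stationarity in the form $\int_G \rho_{g_2}^\xi\,d\mu(g_2) = 1$ to collapse that factor and recover $\Ent_\xi(\rho_{g_1}^\xi)$. Integrating the first piece in $g_1$ against $\nu$ and the second in $g_2$ against $\mu$ then produces the two stated terms; Tonelli justifies every interchange of the $g_1$, $g_2$ and $x$ integrations. Taking $\nu = \mu^{*(k-1)}$ and inducting yields $\int_G \Ent_\xi(\rho_g^\xi)\,d\mu^{*k}(g) = k\int_G \Ent_\xi(\rho_g^\xi)\,d\mu(g)$.

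\emph{Step 3 (summation and the factor case).} With additivity in hand I would expand $\eta_\mu = \sum_{k\geq1}2^{-k}\mu^{*k}$ term by term, using Tonelli once more:
\[
\int_G \Ent_\xi(\rho_g^\xi)\,d\eta_\mu(g) = \sum_{k\geq1}2^{-k}\,k\int_G \Ent_\xi(\rho_g^\xi)\,d\mu(g) = \gamma\,h_{(G,\mu)}(X,\xi),
\]
which is precisely (i) after dividing by $\gamma$. For (ii) I would apply (i) to the factor $(Z,\theta)$, giving $h_{(G,\mu)}(Z,\theta) = \gamma^{-1}\int_G \Ent_\theta(\rho_g^\theta)\,d\eta_\mu(g)$, and then rewrite each integrand: since $\pi_*\xi = \theta$, the change-of-variables identity $\int_Z \Phi\circ F\,d\theta = \int_X \Phi\circ(F\circ\pi)\,d\xi$ with $\Phi(t)=t\log t$ and $F=\rho_g^\theta$ gives $\Ent_\theta(\rho_g^\theta) = \Ent_\xi(\rho_g^\theta\circ\pi)$, while Lemma~\ref{lemma_RNfactor} identifies $\rho_g^\theta\circ\pi$ with $\bE_{\pi^{-1}(\cB_Z)}(\rho_g^\xi)$ in $L^1(X,\xi)$.

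The main obstacle is Step 2: one must expand the cocycle in the correct order, perform the change of variables \eqref{RN2} on exactly the right summand, and invoke $\mu$-stationarity at exactly the right place to eliminate the spurious density factor. The saving grace is that $\Ent_\xi(\rho_g^\xi) \geq 0$, so every Fubini--Tonelli interchange is legitimate and all identities hold in $[0,\infty]$, even when $h_{(G,\mu)}(X,\xi) = +\infty$; thus the only genuine care required is the bookkeeping of the cocycle expansion rather than any analytic subtlety.
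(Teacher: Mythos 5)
Your proof is correct and follows essentially the same route as the paper's: the cocycle identity \eqref{RN1} gives additivity of the entropy integral under convolution with the stationary measure $\mu$, induction and summation over $\eta_\mu = \sum_{k\geq 1} 2^{-k}\mu^{*k}$ produce the factor $\gamma$, and (ii) reduces to (i) via $\pi_*\xi = \theta$ together with Lemma \ref{lemma_RNfactor}. The only differences are cosmetic: you convert $-\int_X \log\frac{dg^{-1}\xi}{d\xi}\,d\xi$ into $\Ent_\xi(\rho_g^\xi)$ at the outset rather than at the end, and you make explicit (via the Harnack bounds of Lemma \ref{lemma_RNbounded}) the boundedness that the paper uses implicitly when applying \eqref{RN2} and splitting and interchanging the integrals.
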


These alternative formulas relate the Furstenberg entropy to the notion of averaged entropy of a sub-$\sigma$-algebra, as discussed in connection with Theorem \ref{thm_kernelconv}. 

\subsection{Proof of Lemma \ref{lemma_RNfactor}}

Let $(X,\xi)$ be a Borel $G$-space and suppose that $\pi : (X,\xi) \ra (Z,\theta)$ is a $G$-factor. We shall prove that for all $g \in G$,
\[
\rho_g^{\theta} \circ \pi = \bE_{\pi^{-1}(\cB_Z)}(\rho_g^{\xi}), \quad \textrm{$\xi$-almost everywhere},
\]
or equivalently, that for every $g \in G$ and $f \in L^\infty(Z,\theta)$, we have 
\[
\int_X (f \circ \pi)  \, (\rho_g^\theta \circ \pi) \, d\xi =  \int_X (f \circ \pi)  \,  \bE_{\pi_Z^{-1}(\cB_Z)}(\rho_g^\xi) \, d\xi.
\]
To prove this, pick $g \in G$ and $f \in L^\infty(Z, \theta)$, and note that
\begin{eqnarray*}
\int_X (f \circ \pi)  \, (\rho_g^\theta \circ \pi) \, d\xi,
&=&
\int_Z f \, \rho_g^\theta \, d\theta 
=
\int_Z f(gz) \, d\theta(z) \\
&=&
\int_X (f \circ \pi)(gx) \, d\xi(x) 
=
\int_X (f \circ \pi) \, \rho_g^{\xi} \, d\xi \\[1pt]
&=&
\int_X (f \circ \pi)  \, \bE_{\pi_Z^{-1}(\cB_Z)}(\rho^\xi_g) \, d\xi,
\end{eqnarray*}
which finishes the proof.

\subsection{Proof of Lemma \ref{Kudoinvariant}}

Let $(X,\xi)$ be a Borel $G$-space, and suppose that $(\cA_n)$ is a sequence of \emph{$G$-invariant} 
sub-$\sigma$-algebras of $\cB_X$. We denote by $\cA^{+}$ and $\cA^{-}$ the minimal upper and maximal lower Kud\textoverline{o}-limits of 
the sequence $(\cA_n)$ respectively. We shall show that both $\cA^{+}$ and $\cA^{-}$ are $G$-invariant. \\

Let us begin with the proof that $\cA^{-}$ is $G$-invariant. By \cite[Theorem 3.2]{Kudo}, we have
\[
\cA^{-} = \big\{ A \in \cB_X \mid \textrm{there exists $A_n \in \cA_n$ such that $\xi(A_n \Delta A) \ra 0$} \big\}.
\]
Pick $A \in \cA^{-}$ and a sequence $A_n \in \cA_n$ such that $\xi(A_n \Delta A) \ra 0$. Note that this
implies that the sequence $(\chi_{A_n \Delta A})$ converges to zero in the weak* topology on $L^\infty(X,\xi)$, and thus
\[
\xi(gA \Delta gA_n) = \xi(g(A \Delta A_n)) = \int_X \chi_{A \Delta A_n} \, \rho_g^{\xi} \, d\xi \ra 0, \quad \textrm{for all $g \in G$}.
\]
Since $g A_n \in \cA_n$, this shows that $gA \in \cA^{-}$ (modulo $\xi$-null sets), whence $\cA^{-}$ is $G$-invariant. \\

The proof that $\cA^{+}$ is $G$-invariant is a bit more involved, and requires the following lemma.

\begin{lemma} 
\label{transform}
For every sub-$\sigma$-algebra $\cA \subset \cB_X$, we have
\[
\bE_{g\cA}(f) = \Big( \frac{\bE_{\cA}\big(\rho_{g^{-1}}^\xi \, (f \circ g)\big) }{\bE_\cA(\rho_{g^{-1}}^{\xi})} \Big) \circ g^{-1}, 
\]
for every $g \in G$ and $f \in L^{1}(X,\xi)$. 
In particular $\Vert \bE_{g\cA}(f) \Vert_{L^1(\xi)} = \Vert \bE_{\cA}\big(\rho_{g^{-1}}^\xi \, (f \circ g)\big) \Vert_{L^1(\xi)} $ for every $g \in G$ and $f \in L^{1}(X,\xi)$.
\end{lemma}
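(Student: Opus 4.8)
The plan is to verify that the right-hand side, which I denote by $F$, satisfies the two properties that uniquely characterise $\bE_{g\cA}(f)$: that $F$ is $g\cA$-measurable, and that $\int_X h\,F\,d\xi = \int_X h\,f\,d\xi$ for every bounded $g\cA$-measurable function $h$. Measurability is immediate, since the bracketed quotient is a ratio of two $\cA$-measurable functions, the denominator $\bE_\cA(\rho_{g^{-1}}^\xi)$ being strictly positive $\xi$-a.e. because $\rho_{g^{-1}}^\xi > 0$ a.e., and precomposing an $\cA$-measurable function with $g^{-1}$ produces a $g\cA$-measurable one. The crucial observation for the testing property is that every bounded $g\cA$-measurable $h$ is of the form $h = k\circ g^{-1}$ with $k$ bounded and $\cA$-measurable, since $\chi_{gA} = \chi_A\circ g^{-1}$ for $A\in\cA$.

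The main computational tool I would isolate first is the change-of-variables identity
\[
\int_X \Psi\,d\xi = \int_X (\Psi\circ g)\,\rho_{g^{-1}}^\xi\,d\xi, \qquad \Psi\in L^1(X,\xi),
\]
which I would derive from \eqref{RN2} together with the cocycle relation \eqref{RN1}: taking $g_1=g^{-1}$, $g_2=g$ in \eqref{RN1} gives $\rho_{g^{-1}}^\xi\cdot(\rho_g^\xi\circ g)=1$, so writing $\Psi=(\Psi/\rho_g^\xi)\,\rho_g^\xi$ and applying \eqref{RN2} to $F=\Psi/\rho_g^\xi$ yields the displayed formula.

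With this identity in hand, I would run both sides of the testing equality through the substitution. For the right-hand side, applying the change of variables to $\Psi = h f = (k\circ g^{-1})f$ collapses $(k\circ g^{-1})\circ g$ to $k$ and produces $\int_X k\,(f\circ g)\,\rho_{g^{-1}}^\xi\,d\xi$, which by the defining property of $\bE_\cA$, pulling the $\cA$-measurable factor $k$ outside, equals $\int_X k\,\bE_\cA\big(\rho_{g^{-1}}^\xi\,(f\circ g)\big)\,d\xi$. For the left-hand side, the same substitution applied to $\Psi = hF$ gives $\int_X k\cdot\frac{\bE_\cA(\rho_{g^{-1}}^\xi(f\circ g))}{\bE_\cA(\rho_{g^{-1}}^\xi)}\cdot\rho_{g^{-1}}^\xi\,d\xi$; since the quotient is $\cA$-measurable, replacing $\rho_{g^{-1}}^\xi$ by $\bE_\cA(\rho_{g^{-1}}^\xi)$ is legitimate and cancels the denominator, again leaving $\int_X k\,\bE_\cA\big(\rho_{g^{-1}}^\xi(f\circ g)\big)\,d\xi$. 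The two agree, so $F=\bE_{g\cA}(f)$.

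Finally, for the $L^1$-identity I would take absolute values in the formula and integrate, applying the same change-of-variables identity to $\Psi=|F|$; the $\cA$-measurable quotient again pulls out and the weight $\rho_{g^{-1}}^\xi$ is absorbed into $\bE_\cA(\rho_{g^{-1}}^\xi)$, cancelling the denominator and giving $\|\bE_{g\cA}(f)\|_{L^1(\xi)}=\|\bE_\cA(\rho_{g^{-1}}^\xi(f\circ g))\|_{L^1(\xi)}$. I expect the only real subtlety to be bookkeeping the direction of the change of variables and the resulting weight $\rho_{g^{-1}}^\xi$ (rather than $\rho_g^\xi$), which is exactly where the cocycle identity enters; checking the positivity of $\bE_\cA(\rho_{g^{-1}}^\xi)$, which guarantees that all quotients are well defined $\xi$-a.e., is the other point that must be verified rather than assumed.
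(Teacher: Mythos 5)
Your proposal is correct and follows essentially the same route as the paper's proof: both verify the defining property of $\bE_{g\cA}(f)$ by testing against $g\cA$-measurable objects (the paper uses indicator functions $\chi_{gA}$, $A\in\cA$; you use bounded $h=k\circ g^{-1}$, which is the same thing), and both rely on the change of variables with weight $\rho_{g^{-1}}^\xi$ together with the pulling-out property of $\bE_\cA$ to swap $\rho_{g^{-1}}^\xi$ and $\bE_\cA(\rho_{g^{-1}}^\xi)$ under the integral. Your extra care about the positivity of $\bE_\cA(\rho_{g^{-1}}^\xi)$ and the explicit derivation of the substitution identity from \eqref{RN1}--\eqref{RN2} are points the paper leaves implicit, but they do not change the argument.
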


\begin{proof}
Let $\cA \subset \cB_X$ be a sub-$\sigma$-algebra. Fix $g \in G$ and $f \in L^1(X,\xi)$, and note that for every $A \in \cA$,
\begin{eqnarray*}
\int_{gA} \bE_{g\cA}(f) \, d\xi
&=&
\int_{gA} f \, d\xi = \int_A \rho_{g^{-1}}^\xi \, (f \circ g) \, d\xi \\
&=&
\int_A \bE_{\cA}\big(\rho_{g^{-1}}^\xi \, (f \circ g)) \, d\xi = 
\int_A \frac{\bE_{\cA}\big(\rho_{g^{-1}}^\xi \, (f \circ g))}{\bE_{\cA}(\rho_{g^{-1}}\xi)} \, \bE_{\cA}(\rho_{g^{-1}}^\xi) \, d\xi \\
&=&
\int_A \frac{\bE_{\cA}\big(\rho_{g^{-1}}^\xi \, (f \circ g))}{\bE_{\cA}(\rho_{g^{-1}}^\xi)} \, \rho_{g^{-1}}^\xi \, d\xi
=
\int_{gA} \Big(\frac{\bE_{\cA}\big(\rho_{g^{-1}}^\xi \, (f \circ g))}{\bE_{\cA}(\rho_{g^{-1}}^\xi)} \Big) \circ g^{-1} \, d\xi.
\end{eqnarray*}
Since the integrand in the last integral is $g\cA$-measurable, and $A \in \cA$ is arbitrary, the first identity is proven.
\\ Now let us consider the above identity when applying the norm. Given $f\in L^{1}(X,\xi)$ we see that 
\[
\int_X \vert \bE_{g\cA}(f) \vert \, d\xi = 
\int_X   \frac{\vert \bE_{\cA}\big(\rho_{g^{-1}}^\xi \, (f \circ g)\big) \vert }{ \bE_\cA(\rho_{g^{-1}}^{\xi}) } \, \rho^{\xi}_{g^{-1}} \, d\xi =
\int_X  \vert \bE_{\cA}\big(\rho_{g^{-1}}^\xi \, (f \circ g)\big) \vert   \, d\xi
\] as claimed.
\end{proof}

To show that $\cA^{+}$ is $G$-invariant, it suffices to prove that for every $g \in G$, the sub-$\sigma$-algebra $g\cA^{+}$ is again 
an upper limit of $(\cA_n)$. Indeed, assume that we know this, then for every $g \in G$, the intersection $\cA^{+} \cap g \cA^{+}$ is again an upper limit of $(\cA_n)$ by \cite[Lemma 3.2 (i)]{Kudo},
whence must coincide with the Kud\textoverline{o} upper limit $\cA^{+}$ by minimality of the latter. 
\\So given $g\in G$ let us show  that $g\cA^{+}$ is an upper limit for $(\cA_n)$. Let $f\in L^1(X,\xi)$ be arbitrary. Applying Lemma~\ref{transform} two times and using $G$-invariance of $(\cA_n)$ we see that \[\Vert \bE_{g\cA^{+}}(f)\Vert_{L^1(\xi)}= \Vert  \bE_{\cA^+}\big(\rho_{g^{-1}}^\xi \, (f \circ g)\big)  \Vert_{L^1(\xi)} \geq 
\varlimsup_{n}\Vert  \bE_{\cA_n}\big(\rho_{g^{-1}}^\xi \, (f \circ g)\big)  \Vert_{L^1(\xi)} 
\]
\[=\varlimsup_{n}\Vert  \bE_{g\cA_n}\big(f\big)  \Vert_{L^1(\xi)}
=\varlimsup_{n}\Vert  \bE_{\cA_n}\big(f\big)  \Vert_{L^1(\xi)},
\]
thus $g\cA^+$ is an upper limit of $(\cA_n)$.

\subsection{Proof of Lemma \ref{lemma_RNbounded}}\label{sec: rn}
We assume that $d\mu = u \, dm_G$, where $u$ is a continuous function on $G$ with  $\{u>0\}\subseteq Q$ for a   compact set $Q$ and such that\[G = \bigcup_{k \geq 1} \big\{ u^{*k} > 0 \big\}.\]
The following lemma is the main ingredient in the proof of Lemma~\ref{lemma_RNbounded}.

\begin{lemma}
\label{Harnack}
For every $g \in G$, there exists a strictly positive constant $C_g$ such that for every measurable non-negative function 
$\psi$ on $G$ with the property that 
\[
(\psi * \mu)(g) = \psi(g) \qand (\psi * \mu)(e) = \psi(e) = 1,
\]
we have $\psi(g) \leq C_g$.
\end{lemma}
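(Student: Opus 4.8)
The plan is to sandwich the value $\psi(g)$ between a one--step upper estimate at $g$ and a lower estimate coming from the normalisation $\psi(e)=1$, the bridge between the two being the integral of $\psi$ over the relatively compact set $g\{u>0\}\subseteq gQ$. The only device I need beyond harmonicity is the density $w=\sum_{k\ge 1}2^{-k}u^{*k}$ of the auxiliary measure $\eta_\mu$ from \eqref{def_etamu}, for which I will use that $w$ is continuous and \emph{strictly positive everywhere}. Continuity holds because $\|u^{*k}\|_\infty\le\|u^{*(k-1)}\|_1\,\|u\|_\infty=\|u\|_\infty$, so the defining series converges uniformly; strict positivity at a point $x$ follows from \eqref{generating2}, which furnishes some $k$ with $u^{*k}(x)>0$, whence $w(x)\ge 2^{-k}u^{*k}(x)>0$. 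I fix $g$ once and for all and set $M=\|u\|_\infty$ and $\delta_g=\min_{gQ}w$, the latter being strictly positive since $w$ is continuous and positive on the compact set $gQ$; the claim will follow with $C_g=M/\delta_g$.

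For the upper estimate I would use harmonicity of $\psi$ at the single point $g$, i.e.\ $(\psi*\mu)(g)=\psi(g)$. Writing $d\mu=u\,dm_G$ and changing variables by left translation (which preserves $m_G$), this reads
\[
\psi(g)=\int_G\psi(gh)\,u(h)\,dm_G(h)=\int_G\psi(y)\,u(g^{-1}y)\,dm_G(y).
\]
The integrand is supported on $\{y:g^{-1}y\in\{u>0\}\}=g\{u>0\}$ and bounded there by $M\psi(y)$, so this immediately gives $\psi(g)\le M\int_{g\{u>0\}}\psi\,dm_G$.

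For the matching lower estimate I would exploit that $\psi$, being $\mu$--harmonic, satisfies $\psi=\psi*\mu^{*k}$ for every $k$, hence $\psi=\psi*\eta_\mu$, so that $\psi(e)=\int_G\psi(h)\,w(h)\,dm_G(h)$. Since $g\{u>0\}\subseteq gQ$, on which $w\ge\delta_g$, nonnegativity of $\psi$ yields
\[
1=\psi(e)\ge\int_{g\{u>0\}}\psi(h)\,w(h)\,dm_G(h)\ge\delta_g\int_{g\{u>0\}}\psi\,dm_G.
\]
Combining the two displays gives $\psi(g)\le M\delta_g^{-1}=C_g$, as desired. The one point demanding care, and where a naive argument with the bare kernels $u^{*k}$ breaks down, is exactly this lower step: one would like to bound $\int_{g\{u>0\}}\psi$ by a single $u^{*k}$--average at $e$, but there is no reason for $g\{u>0\}$ to lie inside any fixed positivity set $\{u^{*k}>0\}$, nor for $u^{*k}$ to be bounded below on it. Passing to $\eta_\mu$, whose density is continuous and positive on \emph{all} of $G$, removes both obstructions simultaneously, and this is the crux of the argument.
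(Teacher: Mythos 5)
Your argument is sound, but its key step is implemented genuinely differently from the paper's. Both proofs open identically: harmonicity at the single point $g$ plus left-invariance of $m_G$ give $\psi(g)\le\|u\|_\infty\int_{g\{u>0\}}\psi\,dm_G$. The paper then controls this integral by a covering argument: it covers $gQ$ by finitely many translates $s\,V_k(s)$ of the sets $V_k(s)=\{t : u^{*k}(st)>\tfrac12 u^{*k}(s)\}$, bounds the $\psi$-mass of each piece by $\frac{2}{u^{*k}(s)}\int_G\psi\,u^{*k}\,dm_G$, and ends with $C_g=2\|u\|_\infty\sum_{(s,k)\in F_g}u^{*k}(s)^{-1}$. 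You instead sum the convolution powers first: the density $w=\sum_k 2^{-k}u^{*k}$ of $\eta_\mu$ from \eqref{def_etamu} is continuous (uniform convergence, since $\|u^{*k}\|_\infty\le\|u\|_\infty$) and everywhere positive by \eqref{generating2}, so compactness of $gQ$ enters only through $\delta_g=\min_{gQ}w>0$, and the single inequality $1=\int_G\psi\,w\,dm_G\ge\delta_g\int_{g\{u>0\}}\psi\,dm_G$ replaces the paper's finite sum. This is a cleaner packaging of the same underlying mechanism, and your constant $C_g=\|u\|_\infty/\delta_g$ makes the local boundedness of $g\mapsto C_g$ (which the paper needs for the last assertion of Lemma \ref{lemma_RNbounded}) immediate: for $g$ ranging in a compact set $L$, use $\min_{LQ}w>0$.

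One caveat, which your proof shares exactly with the paper's own: the lower estimate uses $(\psi*\mu^{*k})(e)=\psi(e)=1$ for $k\ge 2$ --- you need it for all $k$ in order to write $\psi(e)=\int_G\psi\,w\,dm_G$, while the paper needs it for the finitely many $k$ occurring in $F_g$. This does not follow from the stated two-point hypothesis: already $(\psi*\mu^{*2})(e)=\int_G(\psi*\mu)\,d\mu$ equals $(\psi*\mu)(e)$ only if $\psi*\mu=\psi$ holds at $\mu$-a.e.\ point, not just at $g$ and $e$. In fact the lemma as literally stated is false: on $G=\bR$ with $u$ continuous, positive on $(-1,1)$ and supported in $[-1,1]$, take $g=10$ and let $\psi$ equal $1$ on $[-1,1]$, equal $N$ on $[9,11]$, and $0$ elsewhere; both displayed identities hold, yet $\psi(g)=N$ is arbitrary. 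So your phrase ``$\psi$, being $\mu$-harmonic, satisfies $\psi=\psi*\mu^{*k}$ for every $k$'' assumes strictly more than is given. But the paper's line ``$\int_G\psi(h)\,u^{*k}(h)\,dm_G(h)=\psi(e)=1$'' makes the same unstated assumption, and in the intended application the stronger hypothesis is available (stationarity gives $\mu^{*k}*\xi=\xi$, hence $(\psi_x*\mu^{*k})(e)=1$ for $\xi$-a.e.\ $x$ and every $k$). So this is a defect of the lemma's formulation rather than a gap peculiar to your argument; relative to the paper's own standard of rigor, your proof is complete.
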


\begin{proof}[Proof of Lemma \ref{lemma_RNbounded} assuming Lemma \ref{Harnack}]
Let us fix $g \in G$ throughout the proof. Since $\xi$ is $\mu$-stationary, we see that for every $f \in L^\infty(X,\xi)$,
\begin{eqnarray*}
\int_X f(x) \, \rho_g^{\xi}(x) \, d\xi(x) 
&=& 
\int_X f(gx) \, d\xi(x) = \int_X f(ghx) \, d\mu(h) \, d\xi(x) \\
&=&
\int_X \int_G f(ghx) \, d\mu(h) \, d\xi(x) \\
&=& \int_X f(x) \, \Big( \int_G  \rho^\xi_{gh}(x) \, d\mu(h) \Big) \, d\xi(x),
\end{eqnarray*}
whence there exists a $\xi$-conull subset $X_g \subset X$ such that 
\[
\int_G \rho^\xi_{gh}(x) \, d\mu(h) = \rho^\xi_g(x), \quad \textrm{for all $x \in X_g$}.
\]
Since $\rho_e^\xi(x) = 1$ for all $x$, we conclude that for every $x \in X_g \cap X_e$, the function 
\[
\psi_x(s) = \rho_s^\xi(x),\quad \textrm{for $s \in G$},
\]
satisfies the assumptions of Lemma \ref{Harnack}, whence $\rho_g^\xi(x) \leq C_g$ for all $x \in X_g \cap X_e$. Furthermore, 
we note that the
identities in \eqref{RN1} now  imply that 
\[
\rho_g(x) = \frac{1}{\rho_{g^{-1}}(gx)} \geq \frac{1}{C_{g^{-1}}}, \quad \textrm{for all $x \in X_{g^{-1}} \cap X_e$}.
\]
If we set $\lambda_g = \max(C_g,C_{g^{-1}})$, then 
\[
\lambda_g^{-1} \leq \rho_g^{\xi}(x) \leq \lambda_g, \quad \textrm{for all $x \in X_g \cap X_{g^{-1}} \cap X_e$}.
\]
Since $X_g, X_{g^{-1}}$ and $X_e$ are all $\xi$-conull sets, we have finished the proof. 
\end{proof}

\begin{proof}[Proof of Lemma \ref{Harnack}.]
We shall need some notation. For $k \geq 1$, we set
\[
S_k = \big\{ g \in G \, \mid \, u^{*k}(g) > 0 \big\}.
\]
Since $u^{*k}$ is a continuous function on $G$, we note that $S_k$ is open and for every $s \in G$, the set 
\[
V_k(s) = \Big\{ t \in G \, \mid \, u^{*k}(st) > \frac{1}{2} u^{*k}(s) \Big\}
\]
is an open identity neighbourhood in $G$. If $u^{*k}(s) > 0$, then $V_k(s)$ is non-empty. Since $G = \bigcup_k S_k$, 
we have
\[
G = \bigcup_{s \in G} \bigcup_{k \geq 1} s \, V_k(s).
\]
Since the support of $u$ is contained in a compact set $Q$, there exists a finite set $F_g \subset G \times \bN$ such that
\begin{equation}
\label{Q}
g\, \{u>0\}\subseteq gQ \subseteq \bigcup_{(s,k) \in F_g} s\, V_k(s),
\end{equation}
which has the property that $u^{*k}(s) > 0$ for all $(s,k) \in F_g$. \\

Let us now fix $g \in G$ and a measurable non-negative function $\psi$ on $G$ such that 
\[
(\psi * \mu)(g) = \psi(g) \qand (\psi * \mu)(e) = \psi(e) = 1.
\]
Since $(\psi * \mu)(g) = \psi(g)$ and $m_G$ is left-invariant, 
we have
\begin{eqnarray*}
\psi(g) 
&=&
\int_G \psi(gh) \, u(h) \, dm_G(h) \leq \|u\|_\infty \int_{\{u>0\}} \psi(gh) \, dm_G(h) \\[2pt]
&=& \|u\|_\infty  \, \int_{g\{u>0\}} \psi(h) \, dm_G(h) \leq
\|u\|_\infty  \, \sum_{(s,k) \in F_g} \int_{s \, V_k(s)} \psi(h) \, dm_G(h) 
\\[2pt]
&=&
\|u\|_\infty  \, \sum_{(s,k) \in F_g}  \, \int_{V_k(s)} \psi(sh) \, dm_G(h).
\end{eqnarray*}
Using the bound 
\[
u^{*k}(sh) \geq \frac{1}{2} u^{*k}(s) > 0, \quad \textrm{for all $h \in V_{k}(s)$},
\] 
and the non-negativity of $\psi$, we now get
\begin{eqnarray*}
\psi(g) &\leq & \|u\|_\infty  \, \sum_{(s,k) \in F_g}  \, \int_{V_k(s)} \psi(sh) \, \frac{u^{*k}(sh)}{u^{*k}(sh)}\, dm_G(h) \\[2pt]
&\leq &
2 \, \|u\|_\infty \, \, \sum_{(s,k) \in F_g} \frac{1}{u^{*k}(s)} \, \int_{V_k(s)} \psi(sh) \, u^{*k}(sh)\, dm_G(h) \\[2pt]
&\leq &
2 \, \|u\|_\infty \, \, \sum_{(s,k) \in F_g} \frac{1}{u^{*k}(s)} \, \int_{G} \psi(sh) \, u^{*k}(sh)\, dm_G(h) \\[2pt]
&= &
2 \, \|u\|_\infty \, \, \sum_{(s,k) \in F_g} \frac{1}{u^{*k}(s)} \, \int_{G} \psi(h) \, u^{*k}(h)\, dm_G(h),
\end{eqnarray*}
where we in the last step used the left-invariance of $m_G$. Since 
\[
\int_G \psi(h) u^{*k}(h) \, dm_G(h) = \psi(e) = 1,
\]
we can now conclude that
\[
\psi(g) \leq C_g := 2 \, \|u\|_\infty \, \, \sum_{(s,k) \in F_g} \frac{1}{u^{*k}(s)}.
\]
Moreover, $g\mapsto C_g$ is locally bounded since Eq.~\eqref{Q} can be adapted to  the case when we run over a collection of elements $g$  taken from  a compact set.
\end{proof}

\subsection{Proof of Lemma \ref{lemma_poissonmap}}

Let $(G,\mu)$ be a measured group and let $(B,\nu)$ denote the Poisson boundary of $(G,\mu)$. We shall prove that the set $V$ of all functions of 
the form
\[
f_\psi = \int_G \psi(g) \, \rho_g^{\nu}  \, d\eta_\mu(g), 
\]
as $\psi$ ranges over $L^\infty(G, \eta_\mu)$ is norm-dense in $L^1(B, \nu)$. Assume that this is not the case. Then, by Hahn-Banach's Theorem, there exists
a non-zero $\phi \in  (L^1(B,\nu))^* \cong L^\infty(B,\nu)$ such that 
\begin{eqnarray*}
\int_B f_\psi \, \phi \, d\nu 
&=& 
\int_G \int_B \psi(g) \rho_g^{\nu}(b) \, \phi(b) \, d\nu(b) \, d\eta_\mu(g) \\
&=& 
\int_G \int_B \psi(g) \, \phi(gb) \, d\nu(b) \, d\eta_\mu(g) \\
&=&
\int_G \psi(g) (P_\nu \phi)(g) \, d\eta_\mu(g) = 0,
\end{eqnarray*}
for all $\psi \in L^\infty(G,\eta_\mu)$, where $P_\nu$ denotes the Poisson transform of $(B,\nu)$, introduced in \eqref{def_Poissonmap}. This readily shows that $P_\nu \phi = 0$ in $L^\infty(G,\eta_\mu)$. Since $(B,\nu)$ is the Poisson boundary of $(G,\mu)$, the map $P_\nu$ is an isometric isomorphism 
from $L^\infty(B,\nu)$ into $L^\infty(G, \eta_\mu)$, whence $\phi = 0$, which is a contradiction. This finishes the proof. 

\subsection{Proof of Lemma \ref{lemma_entropy}}

By Lemma \ref{lemma_BorelG}, we have 
\begin{equation}
\label{g1g2}
\frac{d(g_1 g_2)^{-1}\xi}{d\xi}(x) = \frac{dg_1^{-1}\xi}{d\xi}(g_2 x) \, \frac{dg_2^{-1}\xi}{d\xi}(x), 
\end{equation}
for all $g_1, g_2 \in G$ and $\xi$-almost every $x \in X$. Hence, if $\mu_1$ and $\mu_2$ are two Borel probability measures
on $G$ such that $\mu_1 * \xi = \mu_2 * \xi = \xi$, then
\begin{eqnarray*}
\int_G \int_X -\log \frac{dg^{-1}\xi}{d\xi}(x) \, d\xi(x) \, d(\mu_1 *\mu_2)(g) &=& 
\int_G \int_X -\log \frac{dg_1^{-1}\xi}{d\xi}(x) \, d\xi(x) \, d\mu_1(g_1) \\
&+& 
\int_G \int_X -\log \frac{dg_2^{-1}\xi}{d\xi}(x) \, d\xi(x) \, d\mu_2(g_2).
\end{eqnarray*}
In particular,
\[
\int_G \int_X -\log \frac{dg^{-1}\xi}{d\xi}(x) \, d\xi(x) \, d\mu^{*k}(g) = k \int_G \int_X -\log \frac{dg^{-1}\xi}{d\xi}(x) \, d\xi(x) \, d\mu(g),
\]
for all $k \geq 1$, whence, with the Borel probability measure $\eta_\mu$ on $G$ defined as
\[
\eta_\mu = \sum_{k=1}^{\infty} \frac{1}{2^{k}} \, \mu^{*k},
\]
we have
\[
\int_G \int_X -\log \frac{dg^{-1}\xi}{d\xi}(x) \, d\xi(x) \, d\eta_\mu(g) = \gamma \, \int_G \int_X -\log \frac{dg^{-1}\xi}{d\xi}(x) \, d\xi(x) \, d\mu(g),
\]
where
\[
\gamma = \sum_{k=1}^\infty \frac{k}{2^{k}}.
\]
It also follows from \eqref{g1g2} that for every $g \in G$,
\[
\frac{dg^{-1}\xi}{d\xi}(x) = \frac{1}{\frac{dg\xi}{d\xi}(gx)}, \quad \textrm{$\xi$-almost everywhere},
\]
whence 
\begin{eqnarray*}
\int_G \int_X -\log \frac{dg^{-1}\xi}{d\xi}(x) \, d\xi(x) \, d\mu(g) 
&=& 
\gamma^{-1} \, \int_G \int_X \log \frac{dg\xi}{d\xi}(gx) \, d\xi(x) \, d\eta_\mu(g) \\
&=& 
\gamma^{-1} \, \int_G \int_X \frac{dg\xi}{d\xi}(x) \, \log \frac{dg\xi}{d\xi}(x) \, d\xi(x) \, d\eta_\mu(g) \\
&=&
\gamma^{-1} \, \int_G \Ent_\xi(\rho_g^{\xi}) \, d\eta_\mu(g),
\end{eqnarray*}
which finishes the proof of (i). Now (ii) follows from Lemma \ref{lemma_RNfactor}.

\section{Proof of Theorem  \ref{thm_upper/lower-limits}}
\label{sec:proofUppLow}
Throughout this section, we adopt the notation and assumptions in Subsection \ref{subsec:entropyfcns}.
The following proposition is the main ingredient in the proof of Theorem \ref{thm_upper/lower-limits}. 
\vspace{0.1cm}
\begin{proposition} 
\label{Prop_Phi-inequality}
For every $0 < \delta < t_o$, we have
\[
\sup_{f \in \cP_\Phi} \Big| \Ent_\xi^{\Phi}(f) - \int_\delta^\infty \alpha_f(t) \, \Phi''(t) \, dt - \Phi'(\delta) \Big| \leq -2 \max\big( \Phi(\delta), \delta \, \Phi'(\delta) \big).
\]
\end{proposition}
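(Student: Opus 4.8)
The plan is to collapse the entire expression into a single identity by a layer-cake computation, and then to estimate the resulting remainder by an elementary convexity argument, uniformly in $f$. Write $\beta_f(\tau) = \xi(\{f \ge \tau\})$, so that $\alpha_f(t) = \int_t^\infty \beta_f(\tau)\,d\tau$ and $\alpha_f' = -\beta_f$ almost everywhere. Two normalizations will be used throughout: the layer-cake formula for $f$ itself gives $\int_0^\infty \beta_f(\tau)\,d\tau = \int_X f\,d\xi = 1$, and, since $\Phi \in C^1(0,\infty)$ is continuous at $0$ with $\Phi(0)=0$, the identity $\Phi(s) = \int_0^s \Phi'(\tau)\,d\tau$ together with Tonelli/Fubini yields
\[
\Ent_\xi^\Phi(f) = \int_X \Phi(f)\,d\xi = \int_0^\infty \Phi'(\tau)\,\beta_f(\tau)\,d\tau .
\]

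First I would justify this representation carefully, as this is where the only real analytic subtlety lies. The interchange of integrals is legitimate once $\int_0^\infty |\Phi'(\tau)|\,\beta_f(\tau)\,d\tau < \infty$; on $(0,t_o)$ this is bounded by $\int_0^{t_o} |\Phi'| = -\Phi(t_o) < \infty$, while on $(t_o,\infty)$ the integral equals $\int_{\{f > t_o\}} (\Phi(f) - \Phi(t_o))\,d\xi$, which is finite by the defining condition $\int_X |\Phi(f)|\,d\xi < \infty$ of $\cP_\Phi$ (and $\Phi$ is increasing past $t_o$, so the integrand is nonnegative). One also uses that $\beta_f$ and $\tau \mapsto \xi(\{f > \tau\})$ agree off a countable set. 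I regard establishing this representation, and the vanishing boundary term below, as the main obstacle; everything afterwards is algebra and a one-line convexity estimate.

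Next I would integrate by parts in $\int_\delta^\infty \alpha_f(t)\,\Phi''(t)\,dt$ with $v = \Phi'$ and $du = -\beta_f\,dt$. The boundary term at infinity is $\lim_{t\to\infty}\alpha_f(t)\,\Phi'(t) = 0$, which is precisely the second defining condition of $\cP_\Phi$, and the boundary term at $\delta$ is $-\alpha_f(\delta)\,\Phi'(\delta)$; hence
\[
\int_\delta^\infty \alpha_f(t)\,\Phi''(t)\,dt = -\alpha_f(\delta)\,\Phi'(\delta) + \int_\delta^\infty \Phi'(t)\,\beta_f(t)\,dt .
\]
Subtracting this from $\Ent_\xi^\Phi(f) = \int_0^\infty \Phi'\beta_f$ and using $\alpha_f(\delta) - 1 = -\int_0^\delta \beta_f(\tau)\,d\tau$ (from $\int_0^\infty \beta_f = 1$) to absorb the stray $\Phi'(\delta)$, I arrive at the key identity, valid for every $f \in \cP_\Phi$ and every $0 < \delta < t_o$:
\[
\Ent_\xi^\Phi(f) - \int_\delta^\infty \alpha_f(t)\,\Phi''(t)\,dt - \Phi'(\delta) = \int_0^\delta \big(\Phi'(\tau) - \Phi'(\delta)\big)\,\beta_f(\tau)\,d\tau .
\]

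Finally I would bound the right-hand side uniformly in $f$. On $(0,\delta) \subset (0,t_o)$ convexity makes $\Phi'$ increasing, so $\Phi'(\tau) - \Phi'(\delta) \le 0$, while $0 \le \beta_f \le 1$; the integral is therefore nonpositive and its modulus is controlled by splitting $\int_0^\delta(\Phi'(\delta)-\Phi'(\tau))\beta_f\,d\tau$ into its two pieces and estimating them separately via $\beta_f \le 1$ and $\int_0^\delta \beta_f \le \delta$. This bounds the two contributions by $\int_0^\delta |\Phi'| = -\Phi(\delta)$ and by $-\delta\Phi'(\delta)$, respectively, so the whole remainder is at most $-\Phi(\delta) - \delta\Phi'(\delta)$. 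Both $\Phi(\delta)$ and $\delta\Phi'(\delta)$ are negative (as $\Phi' < 0$ on $(0,t_o)$ and $\Phi(\delta) < \Phi(0) = 0$), and the tangent-line inequality at $\delta$ evaluated at $0$ gives $\Phi(\delta) \le \delta\Phi'(\delta)$; this identifies the dominant term and collapses the sum into $2\max\big(-\Phi(\delta),\,-\delta\Phi'(\delta)\big)$, which is the asserted right-hand side. Since assumption (i) forces $\delta\Phi'(\delta) \to 0$ and $\Phi(\delta) \to \Phi(0) = 0$ as $\delta \to 0^+$, this bound is uniformly small, which is exactly what the semicontinuity argument in Theorem \ref{thm_upper/lower-limits} will require.
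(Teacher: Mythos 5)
Your argument follows essentially the same route as the paper's: the representation $\Ent_\xi^\Phi(f)=\int_0^\infty \Phi'(\tau)\,\xi(\{f\ge\tau\})\,d\tau$ is the paper's Lemma \ref{Lemma_EntAlternative}, the integration by parts over $[\delta,\infty)$ using the decay condition $\alpha_f(t)\Phi'(t)\to0$ built into $\cP_\Phi$ is identical, and your key identity is the paper's equation \eqref{Ent} rewritten as a single integral (via $\alpha_f(\delta)-1=-\int_0^\delta\xi(\{f\ge\tau\})\,d\tau$). Your Fubini/fundamental-theorem proof of the representation is in fact tidier than the paper's, which splits $X$ into $\{f<t_o\}$ and $\{f\ge t_o\}$, shifts $\Phi$ by $\Phi(t_o)$, and changes variables on each monotone piece separately; your absolute-integrability check (finite on $(0,t_o)$ since $\int_0^{t_o}|\Phi'|=-\Phi(t_o)$, finite on $(t_o,\infty)$ by $\int_X|\Phi(f)|\,d\xi<\infty$) is exactly what legitimizes the interchange.

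The flaw is in your final sentence: $2\max(-a,-b)=-2\min(a,b)$, \emph{not} $-2\max(a,b)$. By your own tangent-line inequality $\Phi(\delta)\le\delta\Phi'(\delta)<0$, what you have actually proved is $-\Phi(\delta)-\delta\Phi'(\delta)\le-2\Phi(\delta)=-2\min\big(\Phi(\delta),\delta\Phi'(\delta)\big)$, which is weaker (larger) than the asserted $-2\max\big(\Phi(\delta),\delta\Phi'(\delta)\big)=-2\delta\Phi'(\delta)$. This gap cannot be closed, because the statement as printed is false. Take $\Phi(t)=t\log t$ and $f\equiv1$ (all three defining conditions of $\cP_\Phi$ hold trivially); then $\Ent_\xi^\Phi(f)=0$, $\alpha_f(t)=\max(1-t,0)$, and
\[
\Big|\Ent_\xi^\Phi(f)-\int_\delta^\infty \alpha_f(t)\,\Phi''(t)\,dt-\Phi'(\delta)\Big|
=\big|0-(\delta-1-\log\delta)-(1+\log\delta)\big|=\delta,
\]
whereas $-2\max\big(\Phi(\delta),\delta\Phi'(\delta)\big)=-2\delta(1+\log\delta)<\delta$ for every $e^{-3/2}<\delta<t_o=e^{-1}$. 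You should not be troubled that you could not legitimately reach the stated bound: the paper's own proof commits the identical slip in its last line, deriving the same quantity $-\Phi(\delta)-\delta\Phi'(\delta)$ and then declaring it to be at most $-2\max(\cdots)$, when the sum of two nonnegative numbers is controlled by twice the \emph{larger} of them, i.e.\ by $-2\min\big(\Phi(\delta),\delta\Phi'(\delta)\big)$. The correct statement of Proposition \ref{Prop_Phi-inequality} has $\min$ in place of $\max$ (equivalently, right-hand side $-2\Phi(\delta)$); that is exactly what both your argument and the paper's establish, and it is all the application requires, since the proof of Theorem \ref{thm_upper/lower-limits} only uses that the bound tends to $0$ as $\delta\to0^+$, which holds for $-2\Phi(\delta)$ by continuity of $\Phi$ at $0$.
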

\vspace{0.1cm}

\begin{remark}
We stress that $\Phi'(\delta)$ is typically unbounded as $\delta \ra 0^+$. For instance, if $\Phi(t) = t \log t$, then $\Phi'(\delta) = 1 + \log \delta$,
which tends to $-\infty$ as $\delta \ra 0^+$. This is why we must always keep track of the term $\Phi'(\delta)$ throughout our estimates.
\end{remark}

\begin{proof}[Proof of Theorem \ref{thm_upper/lower-limits} assuming Proposition \ref{Prop_Phi-inequality}]
Fix $\eps > 0$. Since $\Phi$ is a continuous function with $\Phi(0) = 0$, and $\lim_{t \ra 0^+} t \Phi'(t) = 0$, we can choose $0 < \delta < t_o$ such that
\[
-2 \max\big( \Phi(\delta), \delta \, \Phi'(\delta) \big) < \eps/2.
\]
Let $(f_n)$ be a sequence in $\cP_\Phi$ and let $f$ be an element in $\cP_\Phi$. By Proposition \ref{Prop_Phi-inequality}, we have
\begin{equation}
\label{sandwich1}
\int_\delta^\infty \alpha_{f_n}(t) \, \Phi''(t) \, dt + \Phi'(\delta) - \eps/2 \leq \Ent_\xi^\Phi(f_n) \leq \int_\delta^\infty \alpha_{f_n}(t) \, \Phi''(t) \, dt 
+ \Phi'(\delta) + \eps/2,
\end{equation}
for all $n$, as well as,
\begin{equation}
\label{sandwich2}
\int_\delta^\infty \alpha_{f}(t) \, \Phi''(t) \, dt + \Phi'(\delta) - \eps/2 \leq \Ent_\xi^\Phi(f) \leq \int_\delta^\infty \alpha_{f}(t) \, \Phi''(t) \, dt 
+ \Phi'(\delta) + \eps/2.
\end{equation}
Since $\Phi$ is a convex and twice differentiable function on $(0,\infty)$, we have $\Phi'' > 0$ on this interval, whence 
$dw_o(t) = \Phi''(t) \, dt$ is a non-negative measure on $[\delta,\infty)$. \\

Let us first prove (i). We assume that $f$ is a lower limit of the sequence $(f_n)$ so that
\[
\alpha_f(t) \leq \varliminf_n \alpha_{f_n}(t), \quad \textrm{for all $t \geq 0$}.
\]
By Fatou's Lemma, applied to the first inequality in \eqref{sandwich1}, we conclude that
\begin{eqnarray*}
\int_\delta^\infty \alpha_{f}(t) \, \Phi''(t) \, dt + \Phi'(\delta) - \eps/2 
&\leq &
\varliminf_n \int_\delta^\infty \alpha_{f_n}(t) \, \Phi''(t) \, dt + \Phi'(\delta) - \eps/2 \\
& \leq & \varliminf_n \Ent_\xi^\Phi(f_n).
\end{eqnarray*}
Now the second inequality in \eqref{sandwich2} implies that
\[
\Ent_\xi^\Phi(f) - \eps \leq \int_\delta^\infty \alpha_{f}(t) \, \Phi''(t) \, dt + \Phi'(\delta) - \eps/2 \leq  \varliminf_n \Ent_\xi^\Phi(f_n).
\]
Since $\eps > 0$ is arbitrary, we have proved (i). \\

To prove (ii), let us assume that $f$ is an upper limit of $(f_n)$, so that 
\[
\varlimsup_n \alpha_{f_n}(t) \leq \alpha_f(t) \quad \textrm{for all $t \geq 0$}.
\]
We shall in addition assume that for some $\beta > 0$,
\[
\int_1^\infty t^{-\beta} \, \Phi''(t) \, dt < \infty \qand \sup_n \int_X f_n^{1+\beta} \, d\xi < \infty.
\]
The first condition says that the measure $dw_\beta(t) = t^{-\beta} \Phi''(t) \, dt$ is a finite non-negative measure on the interval $[\delta,\infty)$ 
(since $\Phi''$ is bounded on $[\delta,1)$),
while the second condition implies that the sequence $(\varphi_n)$ of non-negative functions on $[\delta,\infty)$ defined by $\varphi_n(t) := t^{\beta} \alpha_{f_n}(t)$ satisfies
\[
\sup_n \|\varphi_n\|_\infty \leq C_\beta := \frac{1}{\beta} \sup_n \int_X f_n^{1+\beta} \, d\xi < \infty.
\]
To see this, note that Markov's inequality implies that
\[
\alpha_{f_n}(t) \leq \frac{1}{\beta t^\beta} \int_X f_n^{1+\beta} \, d\xi, \quad \textrm{for all $n$}.
\]
If we now apply Fatou's Lemma to the non-negative sequence $(C_\beta-\varphi_n)$ and the measure $dw_\beta$, we conclude that
\begin{equation}
\label{limsup}
\varlimsup_n \int_\delta^\infty \varphi_n \, dw_\beta(t) \leq \int_0^\infty \varlimsup_n \varphi_n(t) \, dw_\beta(t) \leq \int_\delta^\infty \alpha_f(t) \, \Phi''(t) \, dt,
\end{equation}
since $f$ is an upper limit of $(f_n)$. We now observe that the second inequality in \eqref{sandwich1}, together with \eqref{limsup} implies that
\begin{eqnarray*}
\varlimsup_n \Ent_\xi^\Phi(f_n) 
&\leq & 
\varlimsup_n \int_\delta^\infty \alpha_{f_n}(t) \, \Phi''(t) \, dt + \Phi'(\delta) + \eps/2 \\
&=&
\varlimsup_n \int_\delta^\infty \varphi_n \, dw_\beta(t) + \Phi'(\delta) + \eps/2 \\
&\leq &
\int_\delta^\infty \alpha_f(t) \, \Phi''(t) \, dt + \Phi'(\delta) + \eps/2.
\end{eqnarray*}
The first inequality in \eqref{sandwich2} now shows that
\[
\varlimsup_n \Ent_\xi^\Phi(f_n) \leq \Ent^\Phi_\xi(f) + \eps,
\]
and since $\eps > 0$ is arbitrary, we have proved (ii).
\end{proof}

\subsection{Proof of Proposition \ref{Prop_Phi-inequality}}

\begin{lemma}
\label{Lemma_EntAlternative}
For every $f \in \cP_\Phi$,
\[
\Ent_\xi^\Phi(f) = \int_0^\infty \xi(\{f \geq u\}) \, \Phi'(u) \, du.
\]
\end{lemma}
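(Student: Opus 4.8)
The plan is to integrate against $\xi$ the pointwise \emph{layer-cake identity}
\[
\Phi(f(x)) = \int_0^\infty \chi_{\{f \ge u\}}(x) \, \Phi'(u) \, du
\]
and then to exchange the order of the two integrations. The pointwise identity is just the fundamental theorem of calculus combined with $\Phi(0)=0$: for $s>0$ and $0<\eps<s$ one has $\int_\eps^s \Phi'(u)\,du = \Phi(s)-\Phi(\eps)$, and letting $\eps \to 0^+$ gives $\int_0^s \Phi'(u)\,du = \Phi(s)$. Here the improper integral is an honest Lebesgue integral because $\Phi'$ is integrable near $0$; indeed, since $\Phi'<0$ on $(0,t_o)$, monotone convergence yields $\int_0^{t_o}|\Phi'(u)|\,du = -\Phi(t_o) < \infty$. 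Evaluating at $s = f(x)$ and rewriting $\chi_{\{f \ge u\}}(x) = \chi_{\{u \le f(x)\}}$ produces the displayed identity for $\xi$-almost every $x$.

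After integrating in $x$, the task reduces to justifying the Fubini exchange
\[
\int_X \int_0^\infty \chi_{\{f \ge u\}}(x)\, \Phi'(u)\, du \, d\xi(x) = \int_0^\infty \xi(\{f \ge u\})\, \Phi'(u)\, du .
\]
The main obstacle is that Tonelli's theorem does not apply directly, because $\Phi'$ changes sign at $t_o$, being strictly negative on $(0,t_o)$ and strictly positive on $(t_o,\infty)$. I would therefore split the $u$-integral at $t_o$ and treat the two pieces separately, each having a fixed sign so that Tonelli becomes available, and then recombine.

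On $(0,t_o)$ the integrand $-\Phi'\,\chi_{\{f\ge u\}} \ge 0$, so Tonelli gives $\int_X A\,d\xi = \int_0^{t_o}\Phi'(u)\,\xi(\{f \ge u\})\,du$ for $A(x) = \int_0^{t_o}\Phi'(u)\,\chi_{\{f \ge u\}}(x)\,du$, and this piece is finite since $|A| \le \int_0^{t_o}|\Phi'(u)|\,du = -\Phi(t_o)$ uniformly. On $(t_o,\infty)$ the integrand $\Phi'\,\chi_{\{f \ge u\}} \ge 0$, so Tonelli yields $\int_X B\,d\xi = \int_{t_o}^\infty \Phi'(u)\,\xi(\{f \ge u\})\,du$ with values in $[0,\infty]$ for $B(x) = \int_{t_o}^\infty \Phi'(u)\,\chi_{\{f \ge u\}}(x)\,du \ge 0$; this is exactly where the defining condition $\int_X |\Phi(f)|\,d\xi < \infty$ of $\cP_\Phi$ enters, since from $\Phi(f) = A + B$ with $A$ bounded we get $\int_X B\,d\xi = \int_X \Phi(f)\,d\xi - \int_X A\,d\xi < \infty$. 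Adding the two pieces gives $\Ent_\xi^\Phi(f) = \int_X \Phi(f)\,d\xi = \int_0^\infty \xi(\{f \ge u\})\,\Phi'(u)\,du$, as claimed. I expect the only delicate points to be this bookkeeping of signs and the finiteness of the tail piece; note that the second $\cP_\Phi$-condition $\lim_{t\to\infty}\alpha_f(t)\,\Phi'(t)=0$ is not needed for this lemma.
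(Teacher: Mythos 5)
Your proof is correct, and it takes a genuinely different route from the paper's. The paper proves Lemma~\ref{Lemma_EntAlternative} by setting $\Phi_o = \Phi - \Phi(t_o) \geq 0$, splitting $\xi$ into the restrictions $\xi_f^{-} = \xi(\cdot \cap \{f < t_o\})$ and $\xi_f^{+} = \xi(\cdot \cap \{f \geq t_o\})$, applying the layer-cake formula $\int_X h \, d\theta = \int_0^\infty \theta(\{h \geq \tau\})\, d\tau$ to $h = \Phi_o(f)$ under each restricted measure, and then changing variables $\tau = \Phi_o(t)$, using the monotonicity of $\Phi_o$ on $[0,t_o]$ and on $[t_o,\infty)$ to convert super-level sets of $\Phi_o(f)$ into level sets of $f$ (via identities such as $\{f \leq t\} = \{\Phi_o(f) \geq \Phi_o(t)\} \cap \{f < t_o\}$). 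You instead represent $\Phi$ pointwise by the fundamental theorem of calculus, $\Phi(f(x)) = \int_0^{f(x)} \Phi'(u)\, du$, and exchange the two integrations by Tonelli, applied separately on $(0,t_o)$ and $(t_o,\infty)$ where $\Phi'$ has constant sign. The split at $t_o$ plays the same structural role in both arguments, but your Fubini--Tonelli route dispenses with the change of variables and the level-set bookkeeping, and it isolates cleanly where the hypothesis $\int_X |\Phi(f)|\, d\xi < \infty$ enters (finiteness of the tail piece $B$). Your closing remark is also accurate and matches the paper: the paper's proof of the lemma never uses the decay condition $\lim_{t\to\infty}\alpha_f(t)\,\Phi'(t) = 0$ either; that condition enters only in the integration by parts in the proof of Proposition~\ref{Prop_Phi-inequality}.

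One caveat, which applies equally to your argument and to the paper's, so it is not a defect of your proposal: both proofs use $\lim_{\eps \to 0^+} \Phi(\eps) = \Phi(0) = 0$, i.e.\ continuity of $\Phi$ at $0$. You use it when letting $\eps \to 0^+$ in $\int_\eps^s \Phi'(u) \, du = \Phi(s) - \Phi(\eps)$ and when computing $\int_0^{t_o} |\Phi'(u)|\, du = -\Phi(t_o)$; the paper uses it when asserting $\int_0^{t_o}\Phi'(t)\,dt = \Phi(t_o) - \Phi(0) = \Phi(t_o)$. Strictly speaking, continuity at the endpoint does not follow from convexity together with hypotheses (i)--(ii) alone (a convex function on $[0,\infty)$ may jump downward at $0$), but it is plainly the intended reading of the assumption $\Phi(0)=0$, and it holds for the standard entropy function.
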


\begin{remark}
We stress that this lemma would be immediate if $\Phi$ were assumed to be a \emph{strictly increasing} continuously differentiable function 
on $(0,\infty)$. However, the $\Phi$ under study here is strictly decreasing on $(0,t_o)$ and strictly increasing on $(t_o,\infty)$, why our analysis 
below will need to handle these intervals separately. 
\end{remark}

\begin{proof}[Proof of Proposition \ref{Prop_Phi-inequality} assuming Lemma \ref{Lemma_EntAlternative}]
Pick $f \in \cP_\Phi$ and $0 < \delta < t_o$. We use Lemma \ref{Lemma_EntAlternative} to write
\[
\Ent_\xi^\Phi(f) = \int_0^\delta \xi(\{ f \geq u\}) \, \Phi'(u) \, du
+
\int_\delta^\infty \xi(\{ f \geq u\}) \, \Phi'(u) \, du.
\]
Let us consider the second term on the right hand side. Since $f \in \cP_\Phi$, we have 
\[
\lim_{t \ra \infty} \alpha_f(t) \Phi'(t) = 0,
\] 
and since $t \mapsto \alpha_f(t)$ is decreasing, it is differentiable almost everywhere, with derivative 
\[
\alpha_f'(t) = - \xi(\{f \geq t\}), \quad \textrm{for Lebesgue almost every $t \in (\delta,\infty)$}.
\]
Partial integration now yields,
\begin{eqnarray*}
\int_\delta^\infty \xi(\{ f \geq u\}) \, \Phi'(u) \, du 
&=& 
\Big[ -\alpha_f(u) \Phi'(u) \Big]_\delta^\infty + \int_\delta^\infty \alpha_f(t) \Phi''(t) \, dt \\
&=&
\alpha_f(\delta) \Phi'(\delta) + \int_\delta^\infty \alpha_f(t) \Phi''(t) \, dt \\
&=&
(\alpha_f(\delta) - 1) \Phi'(\delta) + \Phi'(\delta) + \int_\delta^\infty \alpha_f(t) \Phi''(t) \, dt,
\end{eqnarray*}
and thus
\begin{equation}
\label{Ent}
\Ent_\xi^\Phi(f) - \int_\delta^\infty \alpha_f(t) \Phi''(t) \, dt - \Phi'(\delta) = \int_0^\delta \xi(\{ f \geq u\}) \, \Phi'(u) \, du + (\alpha_f(\delta) - 1) \Phi'(\delta).
\end{equation}
Let us now estimate the right hand side above. Since $\Phi(0) = 0$ and $\Phi'(u) < 0$ for all $u \in (0,t_o)$, and since 
$\xi(\{f \leq u\}) \leq 1$ for all $u$, we have
\[
\Big| \int_0^\delta \xi(\{ f \geq u\}) \, \Phi'(u) \, du \Big| \leq -\int_0^\delta \Phi'(u) \, du \leq -\Phi(\delta) + \Phi(0) = -\Phi(\delta),
\]
as well as,
\[
\big| \alpha_f(\delta) - 1\big| = \big|1 - \int_0^\delta \xi(\{f \geq u\}) \, du - 1\big| \leq \delta.  
\]
Since both $\Phi$ and $\Phi'$ are negative on $(0,t_o)$, this shows that the right hand side in \eqref{Ent} is bounded above in 
absolute value by $-2\max(\Phi(\delta),\delta \, \Phi'(\delta))$, which finishes the proof of Proposition \ref{Prop_Phi-inequality}.
\end{proof}

\subsection{Proof of Lemma \ref{Lemma_EntAlternative}}

To prove Lemma \ref{Lemma_EntAlternative}, we shall make use of the following standard identity, which follows from Fubini's Theorem.
Let $\theta$ be a non-negative measure on $X$, and let $h$ be a non-negative measurable function on $X$. Then, 
\begin{equation}
\label{eq_trivialidentity0}
\int_X h(x) \, d\theta(x) 
=
\int_X \underbrace{\Big( \int_0^\infty \chi_{\{h \, \geq \, \tau\}}(x) \, d\tau \Big)}_{{= \, h(x)}} \, d\theta(x) = \int_0^\infty \theta(\{h \geq \tau \}) \, d\tau.
\end{equation}
Pick $f \in \cP_\Phi$ and define the sub-probability measures
\[
\xi_f^{-} = \xi( \cdot \cap \{f \, < \, t_o\}) \qand \xi_f^{+} = \xi( \cdot \cap \{f \, \geq \, t_o\}), \quad \textrm{on $\cB_X$}.
\]
We set $\Phi_o = \Phi - \Phi(t_o)$, and note that $\Phi_o \geq 0$ on $[0,\infty)$, and
\begin{eqnarray}
\int_X \Phi(f) \, d\xi 
&=& 
\int_X \Phi_o(f) \, d\xi + \Phi(t_o) = \int_{\{ f < \, t_o\}} \Phi_o(f) \, d\xi + \int_{\{ f \geq \, t_o\}} \Phi_o(f) \, d\xi + \Phi(t_o)  \nonumber \\ 
&=&
\int_{X} \Phi_o(f) \, d\xi_f^{-} + \int_{X} \Phi_o(f) \, d\xi_f^{+} + \Phi(t_o). \label{Phidecomp}
\end{eqnarray}
The identity \eqref{eq_trivialidentity0} applied to $\theta = \xi_f^{\pm}$ now allows us to write 
\[
\int_{X} \Phi_o(f) \, d\xi_f^{-} = \int_0^\infty \xi_f^{-}\big( \{ \Phi_o(f) \geq \tau \} \big) \, d\tau
\]
and
\[
\int_{X} \Phi_o(f) \, d\xi_f^{+} = \int_0^\infty \xi_f^{+}\big( \{ \Phi_o(f) \geq \tau \} \big) \, d\tau.
\]
Let us begin by analysing the integral against $\xi_f^{-}$. Since $\Phi_o$ is decreasing on $[0,t_o]$, we see that
\[
\Phi_o(0) \geq \Phi_o(f) \geq \Phi_o(t_o) = 0 \quad \textrm{on the set $\{ f < t_o\}$},
\]
whence
\[
\int_0^\infty \xi_f^{-}\big( \{ \Phi_o(f) \geq \tau \} \big) \, d\tau = \int_0^{\Phi_o(0)} \xi_f^{-}\big( \{ \Phi_o(f) \geq \tau \} \big) \, d\tau.
\]
We make the variable substitution $\tau = \Phi_o(t)$ so that $t$ runs from $t_o$ to $0$ and $d\tau = \Phi'(t) \, dt$. Hence,
\[
\int_0^{\Phi_o(0)} \xi_f^{-}\big( \{ \Phi_o(f) \geq \tau \} \big) \, d\tau = \int_{t_o}^{0} \xi_f^{-}\big( \{ \Phi_o(f) \geq \Phi_o(t) \} \big) \, \Phi'(t) \, dt
\]
The assumption that $\Phi_o$ is decreasing on $[0,t_o]$ implies that 
\[
\{ f \leq t \} = \{\Phi_o(f) \geq \Phi_o(t) \big\} \cap \{ f < t_o\}, \quad \textrm{for all $0 \leq t \leq t_o$}, 
\]
whence 
\begin{eqnarray*}
\int_{t_o}^{0} \xi_f^{-}\big( \{ \Phi_o(f) \geq \Phi_o(t) \} \big) \, \Phi'(t) \, dt 
&=& 
\int_{t_o}^{0} \xi_f^{-}\big( \{ f \leq t \} \big) \, \Phi'(t) \, dt = \int_{t_o}^{0} \xi\big( \{ f \leq t \} \big) \, \Phi'(t) \, dt \\
&=& -\int_0^{t_o} (1 - \xi(\{ f > t\}) \, \Phi'(t) \, dt \\
&=& 
-\Phi(t_o) + \int_0^{t_o} \xi(\{f > t\}) \, \Phi'(t) \, dt,
\end{eqnarray*}
where we in the last identity have used our assumption that $\Phi(0) = 0$. Since the map $t \mapsto \xi(\{ f > t \})$ is monotone decreasing, it has at most countably many discontinuities. In 
particular, we have $\xi(\{ f > t \}) = \xi(\{f \geq t\})$ for Lebesgue almost every $t$, and thus
\[
\int_0^{t_o} \xi(\{f > t\}) \, \Phi'(t) \, dt = \int_0^{t_o} \xi(\{f \geq t\}) \, \Phi'(t) \, dt,
\]
from which we conclude that
\[
\int_{X} \Phi_o(f) \, d\xi_f^{-} = -\Phi(t_o) + \int_0^{t_o} \xi(\{f \geq t\}) \, \Phi'(t) \, dt.
\]
Let us now turn to the $\xi_f^{+}$-integral above. Since $\Phi_o$ is increasing on $[t_o,\infty)$, we can make the variable substitution 
$\tau = \Phi_o(t)$, so that $t$ runs from $t_o$ to $\infty$ and $d\tau = \Phi'(t) \, dt$. Furthermore, we have
\[
\{ f \geq t \} = \{\Phi_o(f) \geq \Phi_o(t) \big\} \cap \{ f \geq t_o\}, \quad \textrm{for all $t \geq t_o$},
\]
whence 
\begin{eqnarray*}
\int_0^\infty \xi_f^{+}\big( \{ \Phi_o(f) \geq \tau \} \big) \, d\tau
&=&
\int_{t_o}^\infty \xi_f^{+}\big( \{ \Phi_o(f) \geq \Phi_o(t) \} \big) \, \Phi'(t) \, dt \\
&=&
\int_{t_o}^\infty \xi\big( \{ f \geq t \} \big) \, \Phi'(t) \, dt 
\end{eqnarray*}
We now conclude that
\begin{eqnarray*}
\int_X \Phi(f) \, d\xi 
&=& 
\int_{X} \Phi_o(f) \, d\xi_f^{-} + \int_{X} \Phi_o(f) \, d\xi_f^{+} + \Phi(t_o) \\[3pt]
&=& 
-\Phi(t_o) + \int_0^{t_o} \xi(\{f \geq t\}) \, \Phi'(t) \, dt + \int_{t_o}^\infty \xi\big( \{ f \geq t \} \big) \, \Phi'(t) \, dt + \Phi(t_o) \\[3pt]
&=&
\int_0^\infty \xi\big( \{ f \geq t \} \big) \, \Phi'(t) \, dt,
\end{eqnarray*}
which finishes the proof.

\section{Proof of Theorem \ref{thm_entropycont}}
\label{sec:proofofentropycont}

We retain the assumptions on $\Phi$ from Theorem \ref{thm_upper/lower-limits}. Throughout the rest of this section, we fix 
$\rho \in \cP_\Phi$, a sequence $(\cA_n)$ of sub-$\sigma$-algebras of $\cB_X$ and a sub-$\sigma$-algebra 
$\cA \subset \cB_X$. We set
\[
f_n = \bE_{\cA_n}(\rho) \qand f = \bE_{\cA}(\rho). 
\]
To deduce Theorem \ref{thm_entropycont} from Theorem \ref{thm_upper/lower-limits}, we need to prove:
\vspace{0.1cm}
\begin{enumerate}
\item[(1)] $f_n \in \cP_\Phi$ for all $n$, and $f \in \cP_\Phi$.
\vspace{0.1cm}
\item[(2)] If $\cA$ is a lower Kud\textoverline{o}-limit of $(\cA_n)$, then $f$ is a lower limit of the sequence $(f_n)$.
\vspace{0.1cm}
\item[(3)] If $\cA$ is an upper Kud\textoverline{o}-limit of $(\cA_n)$, then $f$ is a upper limit of the sequence $(f_n)$.
\vspace{0.1cm}
\item[(4)] If $\int_X \rho^{1+\beta} \, d\xi$ for some $\beta > 0$, then $\sup_n \int_X f_n^{1+\beta} \, d\xi < \infty$.
\end{enumerate}
\vspace{0.1cm}
We note that (4) is immediate from Jensen's inequality for conditional expectations. The other points are consequences of the 
following simple lemma (see details below):

\begin{lemma}
\label{Lemma_rho}
Suppose that $f$ is a non-negative $\xi$-integrable function on $X$. Then, 
\[
\int_X |f(x)-t| \, d\xi(x) = 2 \alpha_f(t) - \int_X f(x) \, d\xi(x) + t, \quad \textrm{for every $t \geq 0$.}
\]
\end{lemma}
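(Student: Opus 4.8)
The plan is to reduce the whole statement to the layer-cake representation already recorded in \eqref{eq_trivialidentity0}, after first splitting the absolute value into a linear part and a positive part. Concretely, for every $t \geq 0$ one has the pointwise identity $|f-t| = (f-t) + 2(t-f)_+$, where $(t-f)_+ = \max(t-f,0)$: on $\{f \geq t\}$ the last summand vanishes and both sides equal $f-t$, while on $\{f < t\}$ both sides equal $t-f$. Integrating against $\xi$ and using $\int_X d\xi = 1$ gives
\[
\int_X |f-t| \, d\xi = \int_X f \, d\xi - t + 2 \int_X (t-f)_+ \, d\xi,
\]
so the entire problem reduces to evaluating the last integral.

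First I would compute $\int_X (t-f)_+ \, d\xi$ by the same Fubini argument as in \eqref{eq_trivialidentity0}. Since $(t-f)_+ = \int_0^\infty \chi_{\{(t-f)_+ > s\}} \, ds$ and, for $s \geq 0$, the event $\{(t-f)_+ > s\}$ coincides exactly with $\{f < t-s\}$ (which is empty once $s > t$), Fubini's theorem yields
\[
\int_X (t-f)_+ \, d\xi = \int_0^t \xi(\{f < t-s\}) \, ds = \int_0^t \xi(\{f < u\}) \, du,
\]
after the substitution $u = t-s$. Writing $\xi(\{f < u\}) = 1 - \xi(\{f \geq u\})$ (an exact complement) turns this into $t - \int_0^t \xi(\{f \geq u\}) \, du$.

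Finally I would relate $\int_0^t \xi(\{f \geq u\}) \, du$ back to $\alpha_f$. By definition $\alpha_f(t) = \int_t^\infty \xi(\{f \geq \tau\}) \, d\tau$, and the layer-cake identity \eqref{eq_trivialidentity0} applied to $h = f$ and $\theta = \xi$ gives $\alpha_f(0) = \int_X f \, d\xi$. Hence $\int_0^t \xi(\{f \geq u\}) \, du = \alpha_f(0) - \alpha_f(t) = \int_X f \, d\xi - \alpha_f(t)$, so that $\int_X (t-f)_+ \, d\xi = t - \int_X f \, d\xi + \alpha_f(t)$. Substituting this into the displayed reduction and collecting terms produces exactly $2\alpha_f(t) - \int_X f \, d\xi + t$, as claimed.

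There is no genuine obstacle here: the whole argument is bookkeeping with the layer-cake formula, and every interchange of order of integration is justified by the standing hypothesis that $f$ is $\xi$-integrable, which keeps all the integrals finite. The only identity worth isolating is $\alpha_f(0) = \int_X f \, d\xi$, which is precisely \eqref{eq_trivialidentity0}; once that is in hand, the result follows by the elementary computation above.
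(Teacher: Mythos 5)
Your proof is correct and follows essentially the same route as the paper's: both split $|f-t|$ into a linear part plus twice a one-sided part and then evaluate the one-sided integral with the layer-cake formula \eqref{eq_trivialidentity0}. The only difference is cosmetic --- you use the decomposition $|f-t| = (f-t) + 2(t-f)_+$ and layer-cake $(t-f)_+$ over $[0,t]$, recovering $\alpha_f(t)$ via the identity $\alpha_f(0) = \int_X f\,d\xi$, whereas the paper uses the mirror decomposition $|f-t| = 2(f-t)\chi_{\{f \geq t\}} - (f-t)$ and layer-cakes the truncation $h_t = f\chi_{\{f \geq t\}}$, whose distribution function yields $t\,\xi(\{f \geq t\}) + \alpha_f(t)$ directly.
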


\begin{proof}
Fix $t \geq 0$ and note that
\begin{eqnarray}
\int_X |f(x) - t | \, d\xi(x)
&=& 
\int_{\{f \geq t\}} (f(x)-t) \, d\xi(y) - \int_{\{f < t\}} (f(x)-t) \, d\xi(y) \nonumber \\
&=&
2 \int_{\{f \geq t\}} (f(x)-t) \, d\xi(x) - \int_{X} (f(x)-t) \, d\xi(y) \nonumber \\
&=&
2 \int_{\{f \geq t\}} f(x) \, d\xi(x) - 2t \xi(\{ f \geq t\}) - \int_X f(x) \, d\xi(x) + t. \label{f-t}
\end{eqnarray}
Let $h_t = f \chi_{\{f \geq t\}}$, so that 
\[
\xi(\{h_t \geq \tau\}) 
= 
\left\{
\begin{array}{cc}
\xi(\{ f \geq t \}) & \textrm{if $t \geq \tau$} \\
& \\
\xi(\{ f \geq \tau \}) & \textrm{if $t < \tau$} 
\end{array}
\right., \quad \textrm{for all $\tau \geq 0$}.
\]
We recall that 
\begin{equation}
\label{eq_trivialidentity}
\int_X f(x) \, d\xi(x) 
=
\int_X \underbrace{\Big( \int_0^\infty \chi_{\{f \, \geq \, t\}}(x) \, dt \Big)}_{{= \, f(x)}} \, d\xi(x) = \int_0^\infty \xi(\{f \geq t \}) \, dt,
\end{equation}
whence
\begin{eqnarray*}
\int_{\{f \geq t\}} f(x) \, d\xi(x) 
&=& \int_X h_t(x) \, d\xi(x) = \int_0^\infty \xi(\{h_t \geq \tau\}) \, d\tau \\
&=& \int_0^t \xi(\{h_t \geq \tau\}) \, d\tau + \int_t^\infty \xi(\{h_t \geq \tau\}) \, d\tau \\
&=& \int_0^t \xi(\{f \geq t\}) \, d\tau + \int_t^\infty \xi(\{f \geq \tau\}) \, d\tau \\ [10pt]
&=& t \xi(\{ f \geq t\}) + \alpha_f(t).
\end{eqnarray*}
If we plug this into \eqref{f-t}, we conclude that
\[
\int_X |f(x) - t | \, d\xi(x) = 2 \alpha_f(t) - \int_X f(x) \, d\xi(x) + t.
\]
\end{proof}

The following result is an immediate corollary of Lemma \ref{Lemma_rho}.

\begin{corollary}
\label{cor_lowupplimit}
Let $(f_n)$ be a sequence of non-negative $\xi$-integrable functions on $X$ and let $f$ be a non-negative $\xi$-integrable
function on $X$. Suppose that $\int_X f_n \, d\xi = \int_X f \, d\xi$ for all $n$. Then:
\begin{enumerate}
\item[(i)] $f$ is a lower limit of $(f_n)$ if and only if
\[
\int_X |f(x) - t | \, d\xi(x) \leq \varliminf_n \int_X |f_n - t | \, d\xi(x), \quad \textrm{for all $t \geq 0$}.
\]
\item[(ii)] $f$ is an upper limit of $(f_n)$ if and only if
\[
\varlimsup_n \int_X |f_n(x) - t | \, d\xi(x) \leq \int_X |f(x) - t | \, d\xi(x), \quad \textrm{for all $t \geq 0$}.
\]
\end{enumerate}
\end{corollary}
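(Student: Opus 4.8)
The plan is to read off both (i) and (ii) directly from Lemma~\ref{Lemma_rho} by substituting its identity for $f$ and for each $f_n$, and then cancelling the terms that do not depend on the function. Write $c = \int_X f \, d\xi = \int_X f_n \, d\xi$ for the common mass (this is exactly the standing hypothesis). Lemma~\ref{Lemma_rho} gives, for every $t \geq 0$,
\[
\int_X |f - t| \, d\xi = 2\alpha_f(t) - c + t \qand \int_X |f_n - t| \, d\xi = 2\alpha_{f_n}(t) - c + t.
\]
The crucial observation is that the additive term $-c + t$ is the same for $f$ and for every $f_n$, and that the multiplicative factor $2$ is strictly positive.

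First I would treat (i). Since $\varliminf_n$ is unaffected by adding the fixed constant $-c+t$ and by positive scaling, we have
\[
\varliminf_n \int_X |f_n - t| \, d\xi = 2 \varliminf_n \alpha_{f_n}(t) - c + t.
\]
Substituting both expressions into the inequality appearing in (i), then subtracting the common term $-c+t$ and dividing by $2$, shows that
\[
\int_X |f - t| \, d\xi \leq \varliminf_n \int_X |f_n - t| \, d\xi
\]
holds for a given $t$ if and only if $\alpha_f(t) \leq \varliminf_n \alpha_{f_n}(t)$. Demanding this for all $t \geq 0$ is precisely the definition of $f$ being a lower limit of $(f_n)$, which proves (i). Part (ii) is identical with $\varliminf$ replaced by $\varlimsup$ throughout: the same cancellation reduces the stated inequality to $\varlimsup_n \alpha_{f_n}(t) \leq \alpha_f(t)$, i.e. to the condition that $f$ be an upper limit.

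There is essentially no obstacle here; the only point that deserves explicit mention is that $\varliminf_n$ and $\varlimsup_n$ commute with the affine transformation $s \mapsto 2s - c + t$, which is immediate since the slope is positive and the intercept is constant in $n$. It is worth emphasizing that the equal-mass hypothesis $\int_X f_n \, d\xi = \int_X f \, d\xi$ is exactly what renders this affine term independent of $n$, and hence what makes the equivalences clean rather than mere one-sided implications.
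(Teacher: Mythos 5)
Your proof is correct and is precisely the argument the paper intends: the paper presents this corollary as an ``immediate'' consequence of Lemma~\ref{Lemma_rho}, and your affine-substitution argument (using the equal-mass hypothesis to make the term $-c+t$ independent of $n$, so that $\varliminf$ and $\varlimsup$ pass through the map $s \mapsto 2s - c + t$) is exactly the intended deduction, written out in full.
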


\subsection{Proof of (1)}

We recall that $\cP_\Phi$ denotes the set of all measurable functions $f : X \ra [0,\infty)$ such that
\[
\int_X f(x) \, d\xi(x) = 1 \qand \int_X |\Phi(f(x))| \, d\xi(x) < \infty \qand \lim_{t \ra \infty} \alpha_f(t) \Phi'(t) = 0.
\]
So the first condition is fulfilled for 
$\bE_{\cA_n}(\rho)$ and $\bE_{\cA}(\rho)$, as $\rho$ belongs to $\cP_\Phi$.Since $\Phi$ is assumed to be convex and bounded from below, 
the second condition essentially follows from Jensen's inequality for conditional expectations. To verify the third condition, we need the following
corollary of Lemma \ref{Lemma_rho}. 

\begin{corollary}
\label{Cor_Phireserve}
Let $h$ be a non-negative $\xi$-integrable function on $X$ and let $\cC \subset \cB_X$ be a 
sub-$\sigma$-algebra. Then,
\[
\alpha_{\bE_{\cC}(h)}(t) \leq \alpha_{h}(t), \quad \textrm{for all $t \geq 0$}.
\]
\end{corollary}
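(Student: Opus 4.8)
The plan is to reduce the statement to Lemma \ref{Lemma_rho} via a single application of Jensen's inequality for conditional expectations. First I would record the only normalization fact needed: conditional expectation preserves the total mass, so that $\int_X \bE_{\cC}(h) \, d\xi = \int_X h \, d\xi$. This guarantees that $\bE_{\cC}(h)$ and $h$ have the same $\xi$-integral, which is precisely what will let me cancel the mass-dependent terms at the end.

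Next, I would fix $t \geq 0$ and apply Jensen's inequality for conditional expectations to the convex function $s \mapsto |s - t|$. Since the constant $t$ satisfies $\bE_{\cC}(t) = t$, this yields $|\bE_{\cC}(h) - t| = |\bE_{\cC}(h - t)| \leq \bE_{\cC}(|h - t|)$ pointwise $\xi$-almost everywhere. Integrating both sides against $\xi$ and using that $\bE_{\cC}$ preserves the integral gives $\int_X |\bE_{\cC}(h) - t| \, d\xi \leq \int_X |h - t| \, d\xi$. In words: conditional expectation can only decrease the $L^1$-distance to any constant level $t$.

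Finally, I would feed this inequality into Lemma \ref{Lemma_rho}, applied once to $f = \bE_{\cC}(h)$ and once to $f = h$. The two identities produced have right-hand sides $2\alpha_{\bE_{\cC}(h)}(t) - \int_X h \, d\xi + t$ and $2\alpha_h(t) - \int_X h \, d\xi + t$ respectively, where I use the mass-preservation from the first step to rewrite $\int_X \bE_{\cC}(h) \, d\xi$ as $\int_X h \, d\xi$. The common term $-\int_X h \, d\xi + t$ cancels, and the $L^1$-contraction inequality collapses to $2\alpha_{\bE_{\cC}(h)}(t) \leq 2\alpha_h(t)$, i.e. $\alpha_{\bE_{\cC}(h)}(t) \leq \alpha_h(t)$ for every $t \geq 0$.

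There is no serious obstacle here; the only point requiring a little care is the Jensen step, where one must check that $|h - t|$ is $\xi$-integrable (immediate from integrability of $h$) so that $\bE_{\cC}(|h - t|)$ is well-defined and the inequality integrates cleanly. Equivalently, the conclusion is simply the constant-sequence instance of Corollary \ref{cor_lowupplimit}(ii), so one could alternatively invoke that corollary directly rather than re-running the computation.
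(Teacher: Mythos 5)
Your proposal is correct and is essentially identical to the paper's own proof: both establish the $L^1$-contraction $\int_X |\bE_{\cC}(h) - t| \, d\xi \leq \int_X |h - t| \, d\xi$ (the paper via $|\bE_{\cC}(h-t)| \leq \bE_{\cC}(|h-t|)$, which is your Jensen step) and then apply Lemma \ref{Lemma_rho} to both $f = \bE_{\cC}(h)$ and $f = h$, cancelling the common terms using mass preservation of conditional expectation. Your closing remark that the statement is the constant-sequence instance of Corollary \ref{cor_lowupplimit}(ii) is a valid alternative packaging of the same computation.
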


\begin{remark}
In particular, this implies that if $\rho \in \cP_\Phi$, then 
\[
\varlimsup_{t \ra \infty} \alpha_{\bE_\cC(\rho)}(t) \, \Phi'(t) \leq \lim_{t \ra \infty} \alpha_{\rho}(t) \, \Phi'(t) = 0,
\]
for any sub-$\sigma$-algebra $\cC \subset \cB_X$. Since we assume that $\Phi'(t) > 0$ for all $t > t_o$, this shows that $\bE_{\cC}(\rho)$
belongs to $\cP_\Phi$ as well, which finishes the proof of (1).
\end{remark}

\begin{proof}[Proof of Corollary \ref{Cor_Phireserve}]
Fix a $\xi$-integrable function $h : X \ra [0,\infty)$ and let $\cC \subset \cB_X$ be a 
sub-$\sigma$-algebra. Then,
\[
\int_{X} \big| \bE_{\cC}(h) - t \big| \, d\xi = \int_{X} \big| \bE_{\cC}(h-t) \big| \, d\xi \leq \int_X |h - t | \, d\xi,
\]
for all $t \geq 0$. Hence, by Lemma \ref{Lemma_rho}, applied to $f = \bE_{\cC}(h)$ and $f = h$ respectively, we have
\[
2 \alpha_{\bE_{\cC}(h)}(t) - \int_X \bE_{\cC}(h) \, d\xi + t \leq 2 \alpha_{h}(t) - \int_X h \, d\xi + t, \quad \textrm{for all $t \geq 0$},
\]
whence $\alpha_{\bE_{\cC}(h)}(t) \leq \alpha_{h}(t)$ for all $t \geq 0$.
\end{proof}

\subsection{Proofs of (2) and (3)}

We recall that a sub-$\sigma$-algebra $\cA \subset \cB_X$ is a lower Kud\textoverline{o}-limit of the sequence $(\cA_n)$ (with respect to $\xi$)
if
\begin{equation}
\label{def2_lowlim}
\|\bE_{\cA}(\psi)\|_{L^1(\xi)} \leq \varliminf_n \| \bE_{\cA_n}(\psi) \|_{L^1(\xi)}, \quad \textrm{for all $\psi \in L^1(X,\xi)$},
\end{equation}
and it is an upper Kud\textoverline{o}-limit of $(\cA_n)$ (with respect to $\xi$) if
\begin{equation}
\label{def2_upplim}
\varlimsup_n \| \bE_{\cA_n}(\psi) \|_{L^1(\xi)} \leq \|\bE_{\cA}(\psi)\|_{L^1(\xi)}, \quad \textrm{for all $\psi \in L^1(X,\xi)$}.
\end{equation}
Lemma \ref{Lemma_rho} allows us to reformulate these notions in the language of lower and upper limits of a sequence 
of functions. The following corollary clearly proves (2) and (3). 

\begin{corollary}
\label{cor_relateKudotoAsymp2nd}
Let $(\cA_n)$ be a sequence of sub-$\sigma$-algebras of $\cB_X$ and let $\cA \subset \cB_X$ be a sub-$\sigma$-algebra.
Then: 
\begin{enumerate}
\item[(i)] $\cA$ is a lower Kud\textoverline{o}-limit of $(\cA_n)$ if and only if for every non-negative $\rho \in L^\infty(X,\xi)$ with 
$\int_X \rho \, d\xi = 1$, the function $\bE_{\cA}(\rho)$ is a lower limit of the sequence $(\bE_{\cA_n}(\rho))$.
\vspace{0.1cm}
\item[(ii)] $\cA$ is an upper Kud\textoverline{o}-limit of $(\cA_n)$ if and only if for every non-negative $\rho \in L^\infty(X,\xi)$ with 
$\int_X \rho \, d\xi = 1$, the function $\bE_{\cA}(\rho)$ is an upper limit of the sequence $(\bE_{\cA_n}(\rho))$.
\end{enumerate}
\end{corollary}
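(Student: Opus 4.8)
The plan is to translate both the Kud\textoverline{o}-limit conditions \eqref{def2_lowlim}--\eqref{def2_upplim} and the upper/lower limit conditions into statements about the scalars $\int_X |\bE_\cC(\rho) - t|\,d\xi$, and then to show that a single two-parameter family of test functions already controls the full $L^1$-condition. The bridge is Corollary~\ref{cor_lowupplimit}: since
\[
\int_X \bE_{\cA_n}(\rho)\,d\xi = \int_X \rho\,d\xi = 1 = \int_X \bE_\cA(\rho)\,d\xi,
\]
its integral-preservation hypothesis is met, so it tells us that $\bE_\cA(\rho)$ is a lower limit of $(\bE_{\cA_n}(\rho))$ if and only if $\int_X |\bE_\cA(\rho) - t|\,d\xi \leq \varliminf_n \int_X |\bE_{\cA_n}(\rho) - t|\,d\xi$ for every $t \geq 0$ (and dually, with $\varlimsup$ and the reversed inequality, for upper limits). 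The key algebraic point is that $\bE_\cC(\rho - t) = \bE_\cC(\rho) - t$ for any sub-$\sigma$-algebra $\cC$, so that $\int_X|\bE_\cC(\rho) - t|\,d\xi = \|\bE_\cC(\rho - t)\|_{L^1(\xi)}$. Thus the lower (resp. upper) limit property of $\bE_\cA(\rho)$ is \emph{exactly} the Kud\textoverline{o} inequality \eqref{def2_lowlim} (resp. \eqref{def2_upplim}) tested on the single function $\psi = \rho - t$.

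The forward implications are then immediate. If $\cA$ is a lower (resp. upper) Kud\textoverline{o}-limit, then \eqref{def2_lowlim} (resp. \eqref{def2_upplim}) holds for all $\psi \in L^1(X,\xi)$; specialising to $\psi = \rho - t$, which lies in $L^\infty(X,\xi) \subset L^1(X,\xi)$ for any bounded density $\rho$ and any $t \geq 0$, and combining the identity above with Corollary~\ref{cor_lowupplimit}, shows that $\bE_\cA(\rho)$ is a lower (resp. upper) limit of $(\bE_{\cA_n}(\rho))$.

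The substance of the corollary is the converse, where one must upgrade an inequality known only on the test functions $\rho - t$ to one valid for every $\psi \in L^1(X,\xi)$; this is the main obstacle. First I would reduce to bounded $\psi$: the maps $\psi \mapsto \|\bE_\cA(\psi)\|_{L^1(\xi)}$ and $\psi \mapsto \varliminf_n \|\bE_{\cA_n}(\psi)\|_{L^1(\xi)}$ (and likewise with $\varlimsup$) are each $1$-Lipschitz on $L^1(X,\xi)$, because every conditional expectation is an $L^1$-contraction and a $\varliminf$ or $\varlimsup$ of $1$-Lipschitz functions is again $1$-Lipschitz; hence it suffices to verify the desired inequality on the dense subspace $L^\infty(X,\xi)$. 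The crucial observation is then that every $\psi \in L^\infty(X,\xi)$ is a positive multiple of a test function of the required form: choosing any constant $c > \|\psi\|_\infty$, the function $\psi + c$ is bounded below by $c - \|\psi\|_\infty > 0$, so $m := \int_X (\psi + c)\,d\xi > 0$, the function $\rho := (\psi+c)/m$ is a bounded density, $t := c/m \geq 0$, and $\psi = m(\rho - t)$.

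Since each $\bE_\cC$ is linear and $\|\cdot\|_{L^1(\xi)}$ is positively homogeneous, the factor $m > 0$ cancels on both sides, so the Kud\textoverline{o} inequality for $\psi$ is equivalent to the one for $\rho - t$, which is precisely the hypothesis (via Corollary~\ref{cor_lowupplimit}). This establishes the inequality on all of $L^\infty(X,\xi)$, and the Lipschitz/density argument promotes it to all of $L^1(X,\xi)$, so that $\cA$ is a lower (resp. upper) Kud\textoverline{o}-limit. The only delicate point is exactly this reduction from the family $\{\rho - t : \rho \textrm{ a bounded density}, \ t \geq 0\}$ to arbitrary $L^1$-functions; once positive homogeneity and the contraction property are in hand it is routine. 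Statement (ii) is proved verbatim, replacing $\varliminf$ by $\varlimsup$ and reversing the relevant inequalities throughout.
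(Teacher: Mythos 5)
Your proof is correct and takes essentially the same route as the paper: the forward direction tests the Kud\textoverline{o} inequality on $\psi = \rho - t$ and passes through Corollary~\ref{cor_lowupplimit}, and the converse writes an arbitrary bounded $\psi$ as a positive multiple of $\rho - t$ for a bounded density $\rho$, then extends from $L^\infty(X,\xi)$ to $L^1(X,\xi)$ by density. Your two small refinements---choosing the shift $c$ strictly larger than $\|\psi\|_\infty$, so that a.e.\ constant $\psi$ is handled uniformly rather than as a separate trivial case as in the paper, and making the paper's ``straightforward approximation argument'' explicit via the $1$-Lipschitz continuity of both sides of the inequality---are sound but do not alter the substance of the argument.
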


\begin{proof}
The proofs of (i) and (ii) are almost identical, so we only write out the details for (i). Let us first assume that $\cA$ is a lower Kud\textoverline{o}-limit
of $(\cA_n)$ and pick a non-negative $\rho \in L^\infty(X,\xi)$. Fix $t \geq 0$, and set $\psi = \rho - t$. Since $\cA$ is a lower Kud\textoverline{o}-limit
of $(\cA_n)$, we have
\[
\|\bE_{\cA}(\rho) - t\|_{L^1(\xi)} = \|\bE_{\cA}(\psi)\|_{L^1(\xi)} \leq \varliminf_n \| \bE_{\cA_n}(\psi) \|_{L^1(\xi)} 
= \varliminf_n \|\bE_{\cA_n}(\rho) - t\|_{L^1(\xi)},
\]
whence Corollary \ref{cor_lowupplimit}, applied to $g_n = \bE_{\cA_n}(\rho)$ and $g = \bE_{\cA}(\rho)$, shows that $\bE_{\cA}(\rho)$
is a lower limit of the sequence $(\bE_{\cA_n}(\rho))$. This shows the ``only if''-direction. \\

For the ``if''-direction, we first observe that a straightforward approximation argument shows that \eqref{def2_lowlim} holds if and only if it 
holds for all bounded functions $\psi$. Furthermore, \eqref{def2_lowlim} trivially holds for $\psi$ which are constant $\xi$-almost everywhere). 
In what follows, let us fix $\psi \in L^\infty(X,\xi)$, which is not constant $\xi$-almost everywhere, and 
set
\[
\rho = c(\psi + \|\psi\|_\infty),
\]
where $c$ is a strictly positive constant chosen so that $\int_X \rho \, d\xi = 1$. Then $\rho$ is a non-negative $\xi$-integrable function, and by assumption, 
$\bE_{\cA}(\rho)$ is a lower limit of the sequence $(\bE_{\cA_n}(\rho))$. By Corollary \ref{cor_lowupplimit}, this is equivalent to
\[
\int_X |\bE_{\cA}(\rho) - t| \, d\xi \leq \varliminf_n \int_X |\bE_{\cA_n}(\rho) - t| \, d\xi, \quad \textrm{for all $t \geq 0$}.
\]
In particular, if we take $t = c\|\psi\|_\infty$, then this inequality can be rewritten as 
\[
 \int_X |\bE_{\cA}(\psi)| \, d\xi \leq \varliminf_n \int_X |\bE_{\cA_n}(\psi)| \, d\xi,
\]
and thus \eqref{def2_lowlim} holds for $\psi$. Since $\psi$ is arbitrary, we conclude that $\cA$ is a 
lower Kud\textoverline{o}-limit of the sequence $(\cA_n)$, which finishes the proof of the ``if''-direction.
\end{proof}

\section{Proof of Theorem \ref{thm_quantentropycont}}

Throughout this section, we let
\[
\Phi(t) = 
\left\{
\begin{array}{cc}
0 & \textrm{for $t = 0$} \\[5pt]
t \log t & \textrm{for $t > 0$}
\end{array}
\right.
\]
To avoid cluttering, we write
\[
\Ent_\xi(f) = \int_X \Phi(f) \, d\xi = \int_X f \log f \, d\xi, \quad \textrm{for $f \in \cP_\Phi$},
\]
where $\cP_\Phi$ denotes the set of all non-negative measurable functions on $X$ such that 
\[
\int_X f \, d\xi = 1 \qand \int_X |f| \log^{+} f \, d\xi < \infty \qand \lim_{t \ra \infty} \alpha_f(t) \, \log t = 0.
\]
We note that if $f$ is a bounded $\xi$-measurable function on $X$ with $\int_X f \, d\xi = 1$, then all of these conditions
are satisfied. \\

We will deduce Theorem \ref{thm_quantentropycont} from the following two propositions.

\begin{proposition}[Pinsker-Csizs\'ar-Kullback inequality]
\label{Prop_PCK}
Let $(X,\xi)$ be a probability measure space and let $f$ be a non-negative $\xi$-integrable function on $X$ with 
$\int_X f \, d\xi = 1$. Then,
\[
\int_X |1 - f(x) | \, d\xi(x) \leq \sqrt{2} \Ent_\xi(f)^{1/2}.
\]
\end{proposition}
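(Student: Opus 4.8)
The plan is to give the classical Pinsker-type argument, which reduces the whole statement to a single elementary pointwise estimate followed by an application of Cauchy--Schwarz. Conceptually, the quantity $\Ent_\xi(f)$ is the relative entropy of the probability measure $f\,d\xi$ with respect to $\xi$, and $\int_X |1-f|\,d\xi$ is twice the total variation distance between them, so the inequality is exactly Pinsker's inequality in this setting. If $\Ent_\xi(f) = \infty$ there is nothing to prove, so I assume it is finite. Writing $\psi(x) = x\log x - x + 1$ and using that $\xi$ is a probability measure with $\int_X f \, d\xi = 1$, I first record the identity $\int_X \psi(f) \, d\xi = \Ent_\xi(f)$, since the linear terms contribute $-\int_X f \, d\xi + 1 = 0$.

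The heart of the matter is the pointwise inequality
\[
\frac{(x-1)^2}{x+2} \leq \frac{2}{3}\, \psi(x), \qquad x \geq 0.
\]
To establish it, set $h(x) = \psi(x) - \tfrac{3}{2}\,\frac{(x-1)^2}{x+2}$. One checks directly that $h(1) = 0$ and $h'(1) = 0$, and a short computation gives
\[
h''(x) = \frac{1}{x} - \frac{27}{(x+2)^3} = \frac{(x-1)^2(x+8)}{x(x+2)^3} \geq 0, \qquad x > 0.
\]
Thus $h$ is convex on $(0,\infty)$ with a critical point at $x=1$, so $x = 1$ is its global minimum and $h \geq 0$ there; continuity (with the convention $0\log 0 = 0$) extends the bound to $x = 0$. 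This is the step I expect to be the only real obstacle: the entire argument hinges on the fortunate factorization of the numerator of $h''$ as $(x-1)^2(x+8)$, which is what forces convexity and hence the pointwise bound with precisely the constant $\tfrac{2}{3}$ needed to produce the sharp factor $\sqrt{2}$.

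With the pointwise estimate in hand, I apply the Cauchy--Schwarz inequality with the weight $f+2$:
\[
\int_X |f - 1| \, d\xi = \int_X \frac{|f-1|}{\sqrt{f+2}} \, \sqrt{f+2} \, d\xi \leq \Big( \int_X \frac{(f-1)^2}{f+2} \, d\xi \Big)^{1/2} \Big( \int_X (f+2) \, d\xi \Big)^{1/2}.
\]
Since $\int_X (f+2) \, d\xi = 3$, and since the pointwise inequality gives $\int_X \frac{(f-1)^2}{f+2} \, d\xi \leq \frac{2}{3} \int_X \psi(f) \, d\xi = \frac{2}{3}\,\Ent_\xi(f)$, the right-hand side is bounded by $\big(\tfrac{2}{3}\Ent_\xi(f)\big)^{1/2}\sqrt{3} = \sqrt{2}\,\Ent_\xi(f)^{1/2}$, which is exactly the claimed bound. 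I would close by noting that all integrals are finite under the standing assumption $\Ent_\xi(f) < \infty$, so no further integrability check is required.
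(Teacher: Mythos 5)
Your proof is correct: the pointwise bound $\tfrac{(x-1)^2}{x+2} \le \tfrac{2}{3}\bigl(x\log x - x + 1\bigr)$ is valid (the factorization $h''(x) = \tfrac{(x-1)^2(x+8)}{x(x+2)^3}$ checks out, since $(x+2)^3 - 27x = x^3+6x^2-15x+8 = (x-1)^2(x+8)$), and combining it with Cauchy--Schwarz against the weight $f+2$ yields exactly the constant $\sqrt{2}$. Note that the paper does not prove Proposition \ref{Prop_PCK} at all --- it only cites \cite[Subsection 5.2.1]{Bak} --- and your argument is precisely the classical proof given in that reference, so your write-up simply supplies, self-contained, what the paper defers to the literature.
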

\begin{proposition}
\label{Prop_NormUpper}
Let $(\cA_n)$ be a sequence of sub-$\sigma$-algebras of $\cB_X$ and let $\cA$ be an upper Kud\textoverline{o}-limit of $(\cA_n)$. Then,
for every $\phi \in L^1(X,\xi)$, we have
\[
\lim_n \big\| \bE_{\cA_n}(\phi) - \bE_{\cA_n}(\bE_{\cA}(\phi))\big\|_{L^1(\xi)} = 0.
\]
\end{proposition}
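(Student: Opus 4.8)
The plan is to reduce the statement, via linearity, to a single application of the defining property of an upper Kud\={o}-limit. The key observation is that the function to which $\bE_{\cA_n}$ is applied lies in the kernel of $\bE_{\cA}$.

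First I would introduce $\psi := \phi - \bE_{\cA}(\phi)$, which belongs to $L^1(X,\xi)$ since $\bE_{\cA}$ is an $L^1$-contraction. By linearity of the conditional expectation operators, the quantity under consideration simplifies:
\[
\bE_{\cA_n}(\phi) - \bE_{\cA_n}\big(\bE_{\cA}(\phi)\big) = \bE_{\cA_n}(\psi),
\]
so it suffices to show $\|\bE_{\cA_n}(\psi)\|_{L^1(\xi)} \to 0$.

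Next I would verify that $\psi$ is annihilated by $\bE_{\cA}$. Using linearity together with the idempotency relation $\bE_{\cA} \circ \bE_{\cA} = \bE_{\cA}$, one computes
\[
\bE_{\cA}(\psi) = \bE_{\cA}(\phi) - \bE_{\cA}\big(\bE_{\cA}(\phi)\big) = 0.
\]
Since $\cA$ is an upper Kud\={o}-limit of $(\cA_n)$, applying the defining inequality to $\psi$ gives
\[
\varlimsup_n \|\bE_{\cA_n}(\psi)\|_{L^1(\xi)} \leq \|\bE_{\cA}(\psi)\|_{L^1(\xi)} = 0,
\]
and as the left-hand side is a limit superior of non-negative real numbers, the full limit exists and equals $0$, which is the assertion.

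I do not anticipate any genuine obstacle here: the proof is essentially a one-line consequence of the definition, once the difference of the two $\cA_n$-conditional expectations is recognized as $\bE_{\cA_n}$ applied to a single $\bE_{\cA}$-null function. The only point meriting attention is the bookkeeping that collapses the two terms and the use of idempotency of $\bE_{\cA}$ to place $\psi$ in its kernel.
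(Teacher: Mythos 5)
Your proof is correct and is essentially identical to the paper's own argument: the paper likewise sets $\psi = \phi - \bE_{\cA}(\phi)$, observes $\bE_{\cA}(\psi) = 0$, and applies the defining inequality of an upper Kud\textoverline{o}-limit to $\psi$ to conclude that $\varlimsup_n \|\bE_{\cA_n}(\psi)\|_{L^1(\xi)} \leq \|\bE_{\cA}(\psi)\|_{L^1(\xi)} = 0$. Nothing is missing.
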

\vspace{0.2cm}
\begin{remark}
A short and elegant proof of the celebrated Pinsker-Csizs\'ar-Kullback inequality (Proposition \ref{Prop_PCK}) can be found in \cite[Subsection 5.2.1]{Bak}.

The proof of Proposition \ref{Prop_NormUpper} is short as well, and can be presented on a few lines as follows: Let $\phi \in L^1(X,\xi)$ and set 
$\psi = \phi - \bE_{\cA}(\phi)$ so that $\bE_{\cA}(\psi) = 0$. Since $\cA$ is an upper Kud\textoverline{o}-limit of the sequence $(\cA_n)$, we have
\[
\varlimsup_n \big\| \bE_{\cA_n}(\phi) - \bE_{\cA_n}(\bE_{\cA}(\phi))\|_{L^1(\xi)}
=
\, \varlimsup_n \big\| \bE_{\cA_n}(\psi)\|_{L^1(\xi)} \leq \|\bE_{\cA}(\psi)\big\|_{L^1(\xi)} = 0.
\]
\end{remark}
\vspace{0.5cm}
Let us now turn to the proof of Theorem \ref{thm_quantentropycont}. We fix $\lambda \geq 1$ and a measurable function 
\[
\rho : X \ra [\lambda^{-1},\lambda], \quad \textrm{with $\int_X \rho \, d\xi = 1$},
\]
as well as a sequence $(\cA_n)$ of sub-$\sigma$-algebras of $\cB_X$. We denote by $\cA^{+}$ and $\cA^{-}$ the maximal lower and minimal
upper Kud\textoverline{o}-limits of the sequence $(\cA_n)$ respectively. We write
\[
\|\bE_{\cA^+}(\rho) - \bE_{\cA_n}(\rho)\|_{L^1(\xi)} = \int_X \Big| 1 - \frac{\bE_{\cA^{+}}(\rho)}{\bE_{\cA_n}(\rho)} \Big| \, \bE_{\cA_n}(\rho) \, d\xi,
\]
and for each $n$, we apply Proposition \ref{Prop_PCK} to 
\[
f = \frac{\bE_{\cA^{+}}(\rho)}{\bE_{\cA_n}(\rho)}  \qand d\mu = \bE_{\cA_n}(\rho) \, d\xi.
\]
We conclude that
\begin{eqnarray*}
\|\bE_{\cA^+}(\rho) - \bE_{\cA_n}(\rho)\|_{L^1(\xi)} 
&\leq &
\sqrt{2} \Ent_\xi(f)^{1/2} \\
&=& \sqrt{2} \, \Big( \Ent_\xi(\bE_{\cA^{+}}(\rho)) - \int_X \bE_{\cA^{+}}(\rho) \, \log \bE_{\cA_n}(\rho) \, d\xi \Big)^{1/2}.
\end{eqnarray*}
We now note that 
\begin{eqnarray*}
\int_X \bE_{\cA^{+}}(\rho) \, \log \bE_{\cA_n}(\rho) \, d\xi 
&=& 
\int_X \bE_{\cA_n}(\bE_{\cA^{+}}(\rho)) \, \log \bE_{\cA_n}(\rho) \, d\xi \\
&=& 
\int_X\bE_{\cA_n}(\rho) \, \log \bE_{\cA_n}(\rho) \, d\xi \\
&+&  
\int_X \big(\bE_{\cA_n}(\bE_{\cA^{+}}(\rho)) - \bE_{\cA_n}(\rho)\big) \, \log \bE_{\cA_n}(\rho) \, d\xi \\
&=&
H_\rho(\cA_n) + \int_X \big(\bE_{\cA_n}(\bE_{\cA^{+}}(\rho)) - \bE_{\cA_n}(\rho)\big) \, \log \bE_{\cA_n}(\rho) \, d\xi.
\end{eqnarray*}
Since $\rho : X \ra [\lambda^{-1},\lambda]$, we see that 
\[
\Big|\int_X \big(\bE_{\cA_n}(\bE_{\cA^{+}}(\rho)) - \bE_{\cA_n}(\rho)\big) \, \log \bE_{\cA_n}(\rho) \, d\xi\Big|
\leq 
\big\| \bE_{\cA_n}(\bE_{\cA^+}(\rho)) - \bE_{\cA_n}(\rho)\big\|_{L^1(\xi)} \, \log \lambda,
\]
which tends to zero as $n \ra \infty$ by Proposition \ref{Prop_NormUpper}. Hence,
\[
\varliminf_n \int_X \bE_{\cA^{+}}(\rho) \, \log \bE_{\cA_n}(\rho) \, d\xi = \varliminf_n H_\rho(\cA_n),
\]
and thus
\begin{eqnarray*}
\varlimsup_n \|\bE_{\cA^+}(\rho) - \bE_{\cA_n}(\rho)\|_{L^1(\xi)} 
&\leq &
\sqrt{2} \, \Big( H_\rho(\cA^{+}) - \varliminf_n \int_X \bE_{\cA^{+}}(\rho) \, \log \bE_{\cA_n}(\rho) \, d\xi \Big)^{1/2} \\
& \leq &
\sqrt{2} \, \Big( H_\rho(\cA^{+}) - \varliminf_n H_\rho(\cA_n) \Big)^{1/2},
\end{eqnarray*}
which finishes the proof of Theorem \ref{thm_quantentropycont}.

\section{Proofs of Theorems ~\ref{thm_Tcont} and ~\ref{thm_kernelconv}}
\label{sec:kernelconv}
Throughout this section, we fix a probability measure space $(T,\eta)$ and a measurable 
kernel function $K : T \times X \ra [0,\infty)$ with the properties:
\vspace{0.2cm}
\begin{enumerate}
\item[(I)] for every $t \in T$, the function $\rho_t = K(t,\cdot)$ on $X$ is measurable, satisfies 
\[
\int_X \rho_t \, d\xi = 1 \qand \int_T \Ent_\xi(\rho_t) \, d\eta(t) < \infty,
\]
and there exists $\lambda_t \geq 1$ such that $\rho_t$ only takes values in $[\lambda_t^{-1},\lambda_t]$. \\
\vspace{0.2cm}
\item[(II)] The bounded linear map $\psi \mapsto f_\psi$ from $L^\infty(T,\eta)$ to $L^1(X,\xi)$, defined by
\[
f_\psi = \int_T \psi(t) \, \rho_t \, d\eta(t), \quad \textrm{for $\psi \in L^\infty(T,\eta)$}
\]
has a norm-dense image $V \subset L^1(X,\xi)$.
\end{enumerate}
\begin{remark}
It suffices to assume in (I) that the $\xi$-essential range of $\rho_t$ is contained $[\lambda_t^{-1},\lambda_t]$, that is to say, 
$\lambda_t := \max(\|\rho_t\|_\infty,\|\rho_t^{-1}\|_\infty) < \infty$ for every $t \in T$.
\end{remark}

\vspace{0.2cm}
Let $(\cA_n)$ be a sequence of sub-$\sigma$-algebras of $\cB_X$. We make the following observations:
\vspace{0.2cm}
\begin{enumerate}
\item[(1)] To prove (i) in Theorem \ref{thm_kernelconv}, we need to show that if $h_\eta(\cA_n) \ra 0$, then
\begin{equation}
\label{limit1}
\lim_n \Big\|\int_X f \, d\xi - \bE_{\cA_n}(f) \Big\|_{L^1(\xi)} = 0, \quad \textrm{for all $f \in V$}.
\end{equation}
Since $V$ is norm-dense in $L^1(X,\xi)$, this proves that $\cA_n \ra \{ \, \emptyset,X\}$ (modulo null sets) in the strong operator topology. 
\vspace{0.2cm}
\item[(2)] To prove (ii) in Theorem \ref{thm_kernelconv}, we need to show that if $\cA^+$ is an upper Kud\textoverline{o}-limit of $(\cA_n)$ and 
$h_\eta(\cA_n) \ra h_\eta(\cA^+)$, then
\begin{equation}
\label{limit2}
\lim_n \big\|\bE_{\cA^+}(f) - \bE_{\cA_n}(f) \big\|_{L^1(\xi)} = 0, \quad \textrm{for all $f \in V$}.
\end{equation}
Since $V$ is norm-dense in $L^1(X,\xi)$, this proves that $\cA_n \ra \cA^+$ (modulo null sets) in the strong operator topology. 
\end{enumerate}
\vspace{0.2cm}
\subsection{Proof of Theorem~\ref{thm_Tcont}}

We need to show that the assumptions above imply  that the conditions of Corollary~\ref{cor_kudocont} are fulfilled. In fact, they are fulfilled for any $\beta>0$.

First, since we consider the standard entropy function, for any
$\beta>0$, we have that $\int_{1}^{\infty}t^{-\beta}\Phi''(t)dt<\infty$. Also, the assumption that  $\rho_t$ is bounded for every $t$, 
implies that for any $\beta>0$, we have that $\int_X \rho_t^{1+\beta} d\xi <\infty $. Hence by Corollary~\ref{cor_kudocont} we get convergence for each $t\in T$.

Since we assumed in (I) that the $\eta$-integral of the entorpies of the $\rho_t$ is finite, the Bounded Convergence Theorem now concludes the proof.

\subsection{Proof of (i) in Theorem \ref{thm_kernelconv}}

Let us pick $\psi \in L^\infty(T,\eta)$, and consider $f_\psi \in V$. We have
\begin{equation}
\label{ineq1}
\Big\|\int_X f_\psi \, d\xi - \bE_{\cA_n}(f_\psi) \Big\|_{L^1(\xi)} \leq \|\psi\|_\infty \, \int_T \big\|1 - \bE_{\cA_n}(\rho_t) \big\|_{L^1(\xi)} \, d\eta(t), \quad \textrm{for all $n$}.
\end{equation}
By the Pinsker-Csizs\'ar-Kullback inequality (Proposition \ref{Prop_PCK}), applied to the functions 
$f = \bE_{\cA_n}(\rho_t)$, we see that
\begin{eqnarray*}
\Big\|\int_X f_\psi \, d\xi - \bE_{\cA_n}(f_\psi) \Big\|_{L^1(\xi)} 
&\leq & 
\sqrt{2} \|\psi\|_\infty \int_T H_{\rho_t}(\cA_n)^{1/2} \, d\eta(t) \\
&\leq &
\sqrt{2} \|\psi\|_\infty \Big( \int_T H_{\rho_t}(\cA_n) \, d\eta(t) \Big)^{1/2} \\[4pt]
&=& 
\sqrt{2} \|\psi\|_\infty \, h_\eta(\cA_n)^{1/2},
\end{eqnarray*}
where we have used H\"older's inequality in the second inequality. Hence, if $h_\eta(\cA_n) \ra 0$, then 
\[
\lim_n \Big\|\int_X f_\psi \, d\xi - \bE_{\cA_n}(f_\psi) \Big\|_{L^1(\xi)} = 0.
\]
Since $f_\psi \in V$ is arbitrary, we have established \eqref{limit1}, and thus (i) in Theorem \ref{thm_kernelconv}, in view of (1).

\subsection{Proof of (ii) in Theorem \ref{thm_kernelconv}}

Let us assume that $h_\eta(\cA_{n}) \ra h_\eta(\cA^+)$. We shall show that for every sub-sequence $(n_k)$, there is a further sub-sequence $(n_{k_j})$ such that 
\begin{equation}
\label{reduc}
\lim_j \big\|\bE_{\cA^+}(f) - \bE_{\cA_{n_{k_j}}}(f) \big\|_{L^1(\xi)} = 0, \quad \textrm{for all $f \in V$}.
\end{equation}
In view of (2), this establishes (ii) in Theorem \ref{thm_kernelconv}. We will need the following lemma, whose proof is presented in the next sub-section.

\begin{lemma}
\label{Lemma_subseq}
If $h_\eta(\cA_m) \ra h_\eta(\cA^+)$, then there exists a sub-sequence $(m_l)$ such that
\[
H_{\rho_t}(\cA_{m_l}) \ra H_{\rho_t}(\cA^+), \quad \textrm{for $\eta$-almost every $t \in T$}.
\]
\end{lemma}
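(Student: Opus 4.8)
The plan is to package the statement as an $L^1$-convergence result for the sequence of nonnegative ``entropy defects''. Concretely, I would set
\[
g_n(t) = H_{\rho_t}(\cA^+) - H_{\rho_t}(\cA_n), \quad \textrm{for $t \in T$},
\]
and show that $g_n \ra 0$ in $L^1(T,\eta)$; since $L^1$-convergence forces $\eta$-a.e.\ convergence along a subsequence, the desired subsequence $(m_l)$ then drops out immediately.

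First I would record two facts valid for \emph{every} $t \in T$. Because $\rho_t$ takes values in $[\lambda_t^{-1},\lambda_t]$, it lies in $\cP_\Phi$ and satisfies $\int_X \rho_t^{1+\beta}\,d\xi < \infty$ for every $\beta > 0$, while for the standard $\Phi$ one has $\int_1^\infty t^{-\beta}\,\Phi''(t)\,dt < \infty$ for every $\beta>0$. Hence Theorem~\ref{thm_entropycont}~(ii), applied with the fixed density $\rho_t$ and the upper Kud\textoverline{o}-limit $\cA^+$, yields the pointwise upper semi-continuity
\[
\varlimsup_n H_{\rho_t}(\cA_n) \leq H_{\rho_t}(\cA^+), \quad \textrm{i.e.\ } \varliminf_n g_n(t) \geq 0.
\]
On the other hand, Jensen's inequality for conditional expectations (together with the monotonicity of $\cA \mapsto H_{\rho_t}(\cA)$) gives $0 \leq H_{\rho_t}(\cA) \leq \Ent_\xi(\rho_t)$ for any sub-$\sigma$-algebra $\cA$, so that $|g_n(t)| \leq \Ent_\xi(\rho_t) =: G(t)$, and $G \in L^1(T,\eta)$ by assumption (I).

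Next I would exploit the one-sided control to kill the negative parts $g_n^- = \max(-g_n,0)$. From $\varliminf_n g_n(t) \geq 0$ one gets $\lim_n g_n^-(t) = 0$ for every $t \in T$, and since $0 \leq g_n^- \leq G$ with $G$ integrable, the Dominated Convergence Theorem gives $\int_T g_n^-\,d\eta \ra 0$. Simultaneously, the hypothesis $h_\eta(\cA_n) \ra h_\eta(\cA^+)$ is precisely $\int_T g_n\,d\eta \ra 0$. Writing $\int_T g_n\,d\eta = \int_T g_n^+\,d\eta - \int_T g_n^-\,d\eta$, I conclude that $\int_T g_n^+\,d\eta \ra 0$ as well, whence $\int_T |g_n|\,d\eta \ra 0$; that is, $g_n \ra 0$ in $L^1(T,\eta)$. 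Passing to a subsequence $(m_l)$ along which $g_{m_l} \ra 0$ $\eta$-a.e.\ then finishes the proof.

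The only delicate point — and the main (if modest) obstacle — is that the Kud\textoverline{o}-limit hypothesis supplies merely \emph{one-sided} (upper) semi-continuity, so $g_n$ is not pointwise nonnegative and $g_n(t)$ need not converge for a fixed $t$. The device that bridges this gap is the splitting into positive and negative parts: Dominated Convergence is applied only to the well-behaved negative part, and the vanishing of the \emph{full} integral $\int_T g_n\,d\eta$ — which is exactly the given convergence of averaged entropies — then forces the positive part to vanish in $L^1$ as well.
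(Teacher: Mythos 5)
Your proof is correct, and its global strategy coincides with the paper's: both reduce the lemma to $L^1(T,\eta)$-convergence of the entropy defects and then extract an $\eta$-a.e.\ convergent subsequence, and both rest on the same three ingredients, namely the domination $0 \le H_{\rho_t}(\cC) \le \Ent_\xi(\rho_t)$ with $t \mapsto \Ent_\xi(\rho_t)$ in $L^1(\eta)$, the pointwise bound $\varlimsup_n H_{\rho_t}(\cA_n) \le H_{\rho_t}(\cA^+)$ coming from Theorem~\ref{thm_entropycont}, and the convergence of the averaged entropies. Where you genuinely differ is in how the negative part is killed, and there your argument is the more robust one. In the paper's notation ($\varphi_m(t) = H_{\rho_t}(\cA_m)$, $\varphi(t) = H_{\rho_t}(\cA^+)$), the paper isolates the term $\int_{\{\varphi < \varphi_m\}} (\varphi_m - \varphi)\, d\eta$ --- which equals your $\int_T g_m^-\, d\eta$, so the two decompositions agree algebraically --- and controls it by a limsup-version of Fatou's Lemma applied to $F = \sup_m \varphi_m + \varphi$, asserting that the upper-semicontinuity condition (B) forces $\varlimsup_m \chi_{\{\varphi < \varphi_m\}} = 0$ $\eta$-almost surely. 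That assertion does not follow from (B): the inequality $\varlimsup_m \varphi_m \le \varphi$ does not make the sets $\{\varphi < \varphi_m\}$ eventually negligible (take $\varphi_m = \varphi + 1/m$, for which the paper's intermediate claim \eqref{F} actually fails, although the lemma of course still holds). What (B) does yield is precisely what you use: the pointwise convergence $g_m^- = (\varphi_m - \varphi)^+ \ra 0$ everywhere on $T$, which combined with the integrable majorant $\Ent_\xi(\rho_t)$ and the Dominated Convergence Theorem gives $\int_T g_m^-\, d\eta \ra 0$ directly; the hypothesis $\int_T g_m\, d\eta \ra 0$ then forces $\int_T g_m^+\, d\eta \ra 0$ as well. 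So your route proves the same $L^1$-convergence by a sound argument at the one point where the published justification has a gap, and it can be read as a correction of that step.
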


Let us now pick a sub-sequence $(n_k)$. Since $\cA^+$ is also an upper Kud\textoverline{o}-limit of $(\cA_{n_k})$ and since $\lim_k h_\eta(\cA_{n_k}) \ra h_\eta(\cA^+)$, Lemma \ref{Lemma_subseq} allows us to 
extract a further sub-sequence $(n_{k_j})$ such that 
\begin{equation}
\label{almostsure}
\lim_j H_{\rho_t}(\cA_{n_{k_j}}) = H_{\rho_t}(\cA^+), \quad \textrm{for $\eta$-almost every $t \in T$}.
\end{equation}
Let us now pick $\psi \in L^\infty(T,\eta)$, and consider $f_\psi \in V$. We have
\begin{equation}
\label{ineq2}
\big\|\bE_{\cA^+}(f_\psi) - \bE_{\cA_{n_{k_j}}}(f_\psi) \big\|_{L^1(\xi)}  \leq \|\psi\|_\infty \, \int_T \big\| \bE_{\cA^+}(\rho_t) - \bE_{\cA_{n_{k_j}}}(\rho_t) \big\|_{L^1(\xi)} \, d\eta(t),
\end{equation}
for all $j$, so to prove \eqref{reduc}, we need to show that
\begin{equation}
\label{reduc2}
\lim_{j} \int_T \big\| \bE_{\cA^+}(\rho_t) - \bE_{\cA_{n_{k_j}}}(\rho_t) \big\|_{L^1(\xi)} \, d\eta(t) = 0.
\end{equation}
Since $\sup_j \big\| \bE_{\cA^+}(\rho_t) - \bE_{\cA_{n_{k_j}}}(\rho_t) \big\|_{L^1(\xi)} \leq 2$ for all $t \in T$, the limsup-version of Fatou's Lemma can be applied, so \eqref{reduc2} follows if we can prove: 
\begin{equation}
\label{reduc3}
\int_T \varlimsup_j \big\| \bE_{\cA^+}(\rho_t) - \bE_{\cA_{n_{k_j}}}(\rho_t) \big\|_{L^1(\xi)} \, d\eta(t) = 0.
\end{equation}
By Theorem \ref{thm_quantentropycont}, applied to $\rho = \rho_t$ for every $t \in T$, and the $\eta$-almost sure limit 
\eqref{almostsure}, we have
\[
\int_T \varlimsup_j \big\| \bE_{\cA^+}(\rho_t) - \bE_{\cA_{n_{k_j}}}(\rho_t) \big\|_{L^1(\xi)} \, d\eta(t)
\leq 
\sqrt{2} \int_T \Big( H_{\rho_t}(\cA^+) - \varliminf_j H_{\rho_t}(\cA_{n_{k_j}}) \Big)^{1/2} \, d\eta(t) = 0,
\]
which proves \eqref{reduc3}, and thus (ii) in Theorem \ref{thm_kernelconv}.

\subsection{Proof of Lemma \ref{Lemma_subseq}}

We shall show that if $h_\eta(\cA_m) \ra h_\eta(\cA^+)$, then 
\begin{equation}
\label{normconv}
\lim_m \int_T \big| H_{\rho_t}(\cA^{+}) - H_{\rho_t}(\cA_m)\big| \, d\eta(t) = 0,
\end{equation}
whence, by a standard Borel-Cantelli argument, we can extract at least one sub-sequence $(m_l)$ such that 
$H_{\rho_t}(\cA_{m_l}) \ra H_{\rho_t}(\cA^+)$ for $\eta$-almost every $t \in T$. To prove \eqref{normconv}, we set
\[
\varphi_m(t) = H_{\rho_t}(\cA_m) \qand \varphi(t) = H_{\rho_t}(\cA^{+}).
\]
By Jensen's inequality for conditional expectations, we have for every $t \in T$ and for every sub-$\sigma$-algebra $\cC \subset \cB_X$,
\begin{equation}
\label{Jensen}
H_{\rho_t}(\cC) \leq \Ent_\xi(\rho_t).
\end{equation}
We now know that:
\vspace{0.2cm}
\begin{enumerate}
\item[(A)] both $\sup_m \varphi_m$ and $\varphi$ are $\eta$-integrable (by \eqref{Jensen} and (I)). 
\vspace{0.2cm}
\item[(B)] $\varlimsup_m \varphi_m(t) \leq \varphi(t)$ for every $t \in T$ (by (ii) in Theorem \ref{thm_entropycont}). 
\vspace{0.2cm}
\item[(C)] $\lim_m \int_T \varphi_m \, d\eta = \int_T \varphi \, d\eta$ (by our assumption that $h_\eta(\cA_m) \ra h_\eta(\cA^+)$).
\end{enumerate}
\vspace{0.2cm}
We claim that these properties force $\lim_m \int_T |\varphi - \varphi_m | \, d\eta = 0$. To see this, we write
\begin{eqnarray*}
\int_T |\varphi - \varphi_m| \, d\eta 
&=& 
- \int_{\{\varphi \, < \, \varphi_m\}} (\varphi - \varphi_m) \, d\eta + \int_{\{ \varphi \, \geq \, \varphi_m\}} (\varphi - \varphi_m) \, d\eta \\
&=&
- \int_{\{\varphi \, <  \, \varphi_m\}} (\varphi - \varphi_m) \, d\eta + \int_{T} (\varphi - \varphi_m) \, d\eta \\
&-& 
\int_{\{ \varphi \, < \, \varphi_m\}} (\varphi - \varphi_m) \, d\eta \\
&=&
\int_T \varphi \, d\eta - \int_T \varphi_m \, d\eta - 2 \, \int_{\{ \varphi \, < \, \varphi_m\}} (\varphi - \varphi_m) \, d\eta.
\end{eqnarray*}
By Condition (C), the difference of the first two terms in the last identity tend to zero as $m \ra \infty$, so to prove \eqref{normconv}, it 
suffices to show that the third term tends to zero as well. This will clearly follow if we can prove:
\begin{equation}
\label{F}
\varlimsup_m \int_{\{ \varphi \, < \, \varphi_m\}} F \, d\eta = 0,
\end{equation}
where $F = \sup_m \varphi_m + \varphi$. By (A), $F$ is an $\eta$-integrable non-negative function on $T$, so by the limsup-version of 
Fatou's Lemma, we have
\[
\varlimsup_m \int_{\{ \varphi \, < \, \varphi_m\}} F \, d\eta \leq \int_{T} F \, \varlimsup_m \chi_{ \{ \varphi \, < \, \varphi_m\}} \, d\eta.
\]
By Condition $(B)$, we see that $\varlimsup_m \chi_{ \{ \varphi \, < \, \varphi_m\}} = 0$ $\eta$-almost surely, and thus \eqref{F} follows. This finishes
the proof of Lemma \ref{Lemma_subseq}.

\section{Proofs of Theorems \ref{thm_bndcont} and \ref{thm_FurstenbergEntropy}}
\label{sec:FEnt}
Let $(G,\mu)$ be a measured group and let $(B,\nu)$ denote the Poisson boundary of $(G,\mu)$. We recall from the introduction that if 
$(Z,\theta)$ is a Borel $(G,\mu)$-space, then its \emph{Furstenberg $\mu$-entropy} $h_{(G,\mu)}(Z,\theta)$ is defined as
\[
h_{(G,\mu)}(Z,\theta) = \int_G \int_Z -\log \rho_{g^{-1}}^\theta(z) \, d\theta(z) \, d\mu(g).
\] 
We refer to Section \ref{sec:Borelprel}\ for further definitions and standing assumptions. \\

In what follows, we fix a sequence $((Z_n,\theta_n))$ of $\mu$-boundaries, along with measurable and $G$-equivariant maps 
\[
\pi_n : (B,\nu) \ra (Z_n,\theta_n), \quad \textrm{for all $n$}.
\]
Consider the sequence $(\cA_n)$ in $\frak{S}(B,\nu)$ defined by $\cA_n = \pi_n^{-1}(\cB_{Z_n})$ and denote by $\cA^+$ and $\cA^-$ the 
minimal upper and maximal lower Kud\textoverline{o}-limits of $(\cA_n)$ respectively. As we point out in Subsection \ref{subsec:BorelG}, 
there are $\mu$-boundaries $(Z^+,\theta^+)$ and $(Z^-,\theta^-)$ and measurable $G$-equivariant maps 
\[
\pi_+ : (B,\nu) \ra (Z^+,\theta^+) \qand \pi_- : (B,\nu) \ra (Z^-,\theta^-) 
\]
such that $\pi_{+}^{-1}(\cB_{Z^+}) = \cA^+$ and $\pi_{-}^{-1}(\cB_{Z^-}) = \cA^-$ modulo $\xi$-null sets. \\

In this language, Theorem \ref{thm_FurstenbergEntropy} amounts to proving:
\vspace{0.2cm}
\begin{enumerate}
\item[(1)] If $h_{(G,\mu)}(Z_n,\theta_n) \ra 0$, then $(\cA_n)$ converges to $\{\emptyset,B\}$ in $\frak{S}(B,\nu)$.
\vspace{0.2cm}
\item[(2)] If $h_{(G,\mu)}(Z_n,\theta_n) \ra h_{(G,\mu)}(Z^+,\theta^+)$, then $\cA_n \ra \cA^+$ in $\frak{S}(B,\nu)$.
\end{enumerate}
These assertions are in fact special cases of Theorem \ref{thm_kernelconv}. To see this, we first note that
Lemma \ref{lemma_entropy} allows us to write
\[
h_{(G,\mu)}(Z_n,\theta_n) = \gamma^{-1} \, \int_G \Ent_\nu(\bE_{\cA_n}(\rho_g^{\nu})) \, d\eta_\mu(g) =: \gamma^{-1} \, h_{\eta_\mu}(\cA_n),
\]
for every $n$, and
\[
h_{(G,\mu)}(Z^+,\theta^+) = \gamma^{-1} \, \int_G \Ent_\nu(\bE_{\cA^+}(\rho_g^{\nu})) \, d\eta_\mu(g) =: \gamma^{-1} \, h_{\eta_\mu}(\cA^{+}),
\]
where $\gamma = \sum_{k=1}^\infty \frac{k}{2^{k}}$, and thus (1) and (2) follow from Theorem \ref{thm_kernelconv} (i) and (ii) respectively, applied to 
\[
(T,\eta) = (G,\eta_\mu) \qand \rho_g = \rho_g^\nu,
\]
\emph{provided} that we can show that the Radon-Nikodym kernel $K(g,y) = \frac{dg\nu}{d\nu}(y) = \rho^{\nu}_g(y)$ on $G \times B$ satisfies the conditions of that theorem.  In other words, 
we need to show that:
\vspace{0.2cm}
\begin{enumerate}
\item[a)] for every $g \in G$, the function $\rho^{\nu}_g$ on $B$ is measurable, satisfies 
\[
\int_B \rho^{\nu}_g \, d\nu = 1 \qand \int_G \Ent_\nu(\rho^{\nu}_g) \, d\eta_\mu(g) < \infty,
\]
and there exists $\lambda_g \geq 1$ such that the $\nu$-essential range of $\rho^{\nu}_g$ is contained in $[\lambda_g^{-1},\lambda_g]$.
\vspace{0.2cm}
\item[b)]
The bounded linear map $\psi \mapsto f_\psi$ from $L^\infty(G,\eta_\mu)$ to $L^1(B,\nu)$, defined by
\[
f_\psi = \int_G \psi(g) \, \rho^{\nu}_g \, d\eta_\mu(g), \quad \textrm{for $\psi \in L^\infty(G,\eta_\mu)$}
\]
has a norm-dense image $V \subset L^1(B,\nu)$.
\end{enumerate}
\vspace{0.1cm}
Concerning a), we note that the first two assertions are contained in Lemma \ref{lemma_BorelG}, while the final assertion is
contained in Lemma \ref{lemma_RNbounded}. Furthermore, by (i)
Lemma \ref{lemma_entropy}, the integral $\int_G \Ent_\nu(\rho^{\nu}_g) \, d\eta_\mu(g)$ is finite if $h_{(G,\mu)}(B,\nu)$ is
finite, which readily follows from local boundedness of the map $g \mapsto \lambda_g$ in Lemma \ref{lemma_RNbounded}
and the relatively compactness of  $u$. Finally, we note that b) is established in Lemma \ref{lemma_poissonmap}, and thus the proof
of Theorem \ref{thm_FurstenbergEntropy} is complete.
\\

The proof of Theorem~\ref{thm_bndcont} now follows from Theorem~\ref{thm_Tcont} since we have seen above that the Radon-Nikodym derivatives fulfill the criteria of being kernels in the sense of Section~\ref{subsec:av}.


\begin{thebibliography}{99}
\bibitem{All83-1}
Beth Allen, 
\emph{Neighboring information and distributions of agents characteristics under uncertainty.}
J. Math. Econom. 12 (1983), no. 1, 63--101. 
\bibitem{All83-2}
Beth, Allen,
\emph{Convergence of $\sigma$-fields and applications to mathematical economics.} 
Selected topics in operations research and mathematical economics (Karlsruhe, 1983), 161--174, 
Lecture Notes in Econom. and Math. Systems, \textbf{226}, Springer, Berlin, 1984.
\bibitem{Alon}
Alberto Alonso, 
\emph{A counterexample on the continuity of conditional expectations.} J. Math. Anal. Appl. 129 (1988), no. 1, 1--5.
\bibitem{AlonBram}
Alberto Alonso and Fernando Brambila-Paz,  \emph{$L^p$-continuity of conditional expectations.} J. Math. Anal. Appl. 221 (1998), no. 1, 161--176.
\bibitem{Art2}
Zvi  Artstein, 
\emph{Sensitivity to $\sigma$-fields of information in stochastic allocation.} Stochastics and Stochastics Reports, 
\textbf{36}, 41--63 (1991)
\bibitem{Art3}
Zvi  Artstein, 
\emph{Relaxed multi-functions and Young measures}, Set-Valued Analysis, \textbf{6}, 237--255 (1998)
\bibitem{Art01}
Zvi  Artstein, 
\emph{Compact convergence of $\sigma$-fields and relaxed conditional expectation.} 
Probab. Theory Related Fields \textbf{120} (2001), no. 3, 369--394.
\bibitem{Bab}
Martine Babillot,
\emph{An introduction to Poisson boundaries of Lie groups.} Probability measures on groups: recent directions and trends, 1--90, Tata Inst. Fund. Res., Mumbai, 2006. 
\bibitem{Bak}
Dominique Bakry, Ivan Gentil and Michel Ledoux, 
\emph{Analysis and geometry of Markov diffusion operators.}
Grundlehren der Mathematischen Wissenschaften [Fundamental Principles of Mathematical Sciences], 348. Springer, Cham, 2014. xx+552 pp. 
\bibitem{Bow}
Lewis Bowen,
\emph{Random walks on random coset spaces with applications to Furstenberg entropy.} 
Invent. Math. 196 (2014), no. 2, 485--510. 
\bibitem{BHT}
Lewis Bowen, Yair Hartman and Omer Tamuz, 
\emph{Generic stationary measures and actions.} Trans. Amer. Math. Soc. 369 (2017), no. \textbf{7}, 4889--4929.
\bibitem{BHT2}
Lewis Bowen, Yair Hartman and Omer Tamuz,
\emph{Property (T) and the Furstenberg entropy of nonsingular actions.} 
Proc. Amer. Math. Soc. 144 (2016), no. \textbf{1}, 31--39.
\bibitem{HY}
Yair Hartman and Ariel Yadin,
\emph{Furstenberg entropy of intersectional invariant random subgroups.}
Compositio Mathematica 154 (2018), no. \textbf{10}, 2239--2265.
\bibitem{Bo71}
Edward Boylan,
\emph{Equiconvergence of martingales.}
Ann. Math. Statist. \textbf{42} (1971), 552--559.
\bibitem{Ch}
Marcus Christiansen,
\emph{A martingale concept for non-monotone information in a jump process framework},
Preprint, \url{https://arxiv.org/abs/1811.00952}
\bibitem{Cott86}
Kevin Cotter, \emph{Similarity of information and behavior with a pointwise convergence topology.} J. Math. Econom. 15 (1986), no. 1, 25--38.
\bibitem{Cott87}
Kevin Cotter, 
\emph{Convergence of information, random variables and noise.} J. Math. Econom. 16 (1987), no. 1, 39--51.
\bibitem{Fett77}
Helga Fetter, 
\emph{On the continuity of conditional expectations.}
J. Math. Anal. Appl. \textbf{61} (1977), no. 1, 227--231. 
\bibitem{FWMW05}
David Fisher, Dave Witte Morris, and Kevin Whyte, 
\emph{Nonergodic actions, cocycles and superrigidity}, 
New York J. Math. \textbf{10} (2004), 249--269 (electronic). MR2114789 (2005i:37003)
\bibitem{Fur}
Alex Furman, 
\emph{Random walks on groups and random transformations.} Handbook of dynamical systems, Vol. 1A, 931--1014, North-Holland, Amsterdam, 2002.
\bibitem{Fur1}
Hillel Furstenberg, 
\emph{Noncommuting random products.} Trans. Amer. Math. Soc. 108 (1963) 377--428.
\bibitem{Kudo}
Hirokichi Kud\textoverline{o},
A note on the strong convergence of $\sigma$-algebras. 
Ann. Probability \textbf{2} (1974), no. 1, 76--83. 
\bibitem{Mack}
George Mackey, 
\emph{Point realizations of transformation groups.}
Illinois J. Math. 6 1962 327--335. 
\bibitem{Nev72}
Jacques Neveu, 
\emph{Note on the tightness of the metric on the set of complete sub $\sigma$-algebras of a probability space.}
Ann. Math. Statist. \textbf{43} (1972), 1369--1371.
\bibitem{N}
Amos Nevo, 
\emph{The spectral theory of amenable actions and invariants of discrete groups.} Geom. Dedicata 100 (2003), 187--218.
\bibitem{NZ1}
Amos Nevo and Robert Zimmer, 
\emph{Rigidity of Furstenberg entropy for semisimple Lie group actions}. Ann. Sci. Ecole Norm. Sup. (4) 33 (2000), no. 3, 321--343.
\bibitem{NZ2}
Amos Nevo and Robert Zimmer, 
\emph{Homogenous projective factors for actions of semi-simple Lie groups.} Invent. Math. 138 (1999), no. 2, 229--252.
\bibitem{Picc98}
Laurent Piccinini
\emph{Convergence of non-monotone sequence of sub-$\sigma$-fields and convergence of associated subspaces ($L^p(\mathcal{B}_n), (p \in [1,+\infty])$.} (English summary) 
J. Math. Anal. Appl. \textbf{225} (1998), no. 1, 73--90. 
\bibitem{Rogg74}
Lothar Rogge, 
\emph{Uniform inequalities for conditional expectations.}
Ann. Probability 2 (1974), 486--489.
\bibitem{Sim}
Barry Simon,
\emph{Convexity.  An analytic viewpoint.} 
Cambridge Tracts in Mathematics, 187. Cambridge University Press, Cambridge, 2011. x+345 pp.
\bibitem{TZ}
Omer Tamuz and Tianyi Zheng,
\emph{On the spectrum of asymptotic entropies of random walks}
Preprint. \url{https://arxiv.org/abs/1903.01312}
\bibitem{Vid}
Matija Vidmar,
\emph{A couple of remarks on the convergence of $\sigma$-fields on probability spaces.} 
Statist. Probab. Lett. \textbf{134} (2018), 86--92. 
\bibitem{Z}
Robert Zimmer, 
\emph{Ergodic theory and semisimple groups}, 
Monographs in Mathematics, \textbf{81}, Birkh\"auser, Basel, 1984, MR 0776417 (86j:22014), Zbl 0571.58015.
\end{thebibliography}
\end{document}